\documentclass{article}
\usepackage{amsmath,amsthm,amssymb}
\usepackage{graphicx,cases,color,afterpage,multirow,multicol,mathptmx,helvet,courier,type1cm,derivative,hyperref,placeins}
\usepackage{algorithmic,algorithm}
\allowdisplaybreaks

\newcommand{\dt}{\mathrm{d}t}

\newcommand{\dtau}{\mathrm{d}\tau}

\newcommand{\OT}{{\Omega_T}}
\newcommand{\RT}{{\R^d \times (0,\infty)}}
\newcommand{\OFEM}{{\Omega_{\mathrm{FEM}}}}
\newcommand{\OFDM}{{\Omega_{\mathrm{FDM}}}}
\newcommand{\OIN}{{\Omega_{\mathrm{IN}}}}
\newcommand{\GammaT}{{\Gamma_T}}

\newcommand{\Et}{\tilde{E}}
\newcommand{\epst}{\tilde{\varepsilon}}
\newcommand{\sigt}{\tilde{\sigma}}
\newcommand{\Eb}{\mathbb{E}}

\newcommand{\epsd}{{\delta \varepsilon}}
\newcommand{\sigd}{{\delta\sigma}}

\newcommand{\Etobs}{\tilde{E}_\mathrm{obs}}
\newcommand{\ptt}{\partial_{tt}}
\newcommand{\pt}{\partial_{t}}

\newcommand{\pn}{\partial_{n}}

\newcommand{\R}{\mathbb{R}}
\newcommand{\dobs}{\delta_\mathrm{obs}}
\renewcommand{\div}{\nabla \cdot}

\usepackage[textwidth=1.8cm, textsize=scriptsize]{todonotes}

\begin{document}
\newtheorem{theorem}{Theorem}[section]
\newtheorem{lemma}[theorem]{Lemma}

\numberwithin{equation}{section}

\graphicspath{  
{FIGURES/}{pics}}

\title{Variational optimization approach  for reconstruction of   dielectric permittivity and conductivity functions using partial boundary measurements}


\author{E. Lindström \thanks{Eric Lindström, Email: erilinds@chalmers.se} \and L. Beilina \thanks{Larisa Beilina, Email: larisa.beilina@chalmers.se \\ Department of Mathematical Sciences, Chalmers University of Technology and University of Gothenburg, SE-412 96 Gothenburg Sweden}}

\date{February 10, 2026}

\maketitle


\begin{abstract}

We present a variational optimization approach for
the solution of a coefficient inverse problem of simultaneous
reconstruction of the dielectric permittivity and conductivity
functions in time-dependent Maxwell's system using limited boundary
observations of the electric field.

The variational optimization approach is based on constructing a weak
 form of a Lagrangian which allows to use finite element based
 reconstruction algorithms. The optimality conditions for the
 Lagrangian and stability estimate for the adjoint problem are derived, as well as Frech\'et differentiability of it
 and of the regularized Tikhonov functional are also presented.
Two- and three-dimensional   numerical studies  confirm our theoretical investigations.

\end{abstract}

\textit{Keywords:} Maxwell's system, coefficient inverse problem, Tikhonov functional, Lagrangian approach, stability analysis, conjugate gradient algorithm

\textit{MSC 2010:} 65M06, 65M60, 65M22, 65M32

\section{Introduction}
\label{sec:intro}

In this work we present analysis of variational optimization approach for
reconstruction of spatially distributed  dielectric permittivity and conductivity functions
using partial  time-dependent   measurements of the scattered electric field
at the part of the boundary of investigated  domain.
This problem is a typical coefficient inverse problem (CIP) where both
coefficients of the time-dependent Maxwell's equations, dielectric
permittivity and conductivity functions, should be reconstructed from
boundary measurements.
These coefficients describe a spatially distributed property of the
medium of interest.  Hence,  quantitative and qualitative reconstruction of these
coefficients leads to an image of the interior of the medium and
 to quantitative estimation of these  parameters. 

It is well known that CIPs are ill-posed problems \cite{BKS,Engl,IJT11,tikhonov,TGSK}.
In previous studies \cite{BOOK, BTKM2, TBKF1, TBKF2}, a novel
non-local, approximately globally convergent method for reconstructing
the dielectric permittivity function was developed and validated
through numerical experiments.
In above cited works
the scalar wave equation was taken as an approximate
mathematical model  for the Maxwell's equations.
The two-stage global adaptive optimization method was introduced in \cite{BOOK}
 for reconstructing the dielectric permittivity function in non-conductive media.
The
two-stage numerical procedure of \cite{BOOK} was verified
in several works \cite{BTKM2, TBKF1, TBKF2}
on
experimental data collected by the microwave scattering facility.

The experimental and numerical tests
of above cited works
show that developed methods provide accurate
imaging of all three components of interest in imaging of targets:
shapes, locations and refractive indices of non-conductive media.
 In \cite{convex1}, see also references therein, authors
show reconstruction of complex dielectric permittivity function using
convexification method and frequency-dependent data. 
Potential applications of all above cited works are in the detection and
characterization of improvised explosive devices (IEDs).

In contrast to the previously cited works, this study presents a
theoretical and numerical analysis of a variational optimization
approach for solving the coefficient inverse problem for a 
stabilized Maxwell system in conductive media, based on the
construction of a Lagrangian associated with a regularized Tikhonov
functional.
The optimality conditions for the
Lagrangian, as well as stability estimates for the adjoint problem,
are rigorously derived. In addition, the Fréchet differentiability of
both the adjoint problem and the regularized Tikhonov functional is
established.

 The numerical examples presented in this work demonstrate that
both two- and three-dimensional conjugate gradient (CGA) and adaptive
conjugate gradient (ACGA) algorithms can efficiently and accurately
reconstruct the dielectric permittivity and conductivity functions
using backscattered electric field data. While the two-dimensional
examples are primarily of theoretical interest, the three-dimensional
numerical results focus on applications in microwave medical imaging,
specifically the detection of malignant melanoma, MM.

We note that one of the most important application of algorithms of this paper is
microwave imaging including microwave medical imaging
\cite{ref1,ref2,IEEE2024} and imaging of improvised explosive devices
(IEDs) \cite{BOOK} where qualitative and quantitative determination of
both, the dielectric permittivity and electric conductivity functions
from boundary measurements, is needed.  Potential application of
  theory and algorithms developed in this work are in cancer detection using
microwaves. 
Previous
studies \cite{IEEE2024,ieee1} have highlighted variations in the
relative dielectric permittivity and conductivity contrasts between
malignant and normal tissues, demonstrating that malignant tumors
exhibit higher relative permittivity values compared to normal tissues
at frequencies below 10 GHz.  Several engineering works have tackled
this issue - see some of them in \cite{ref1,ref2,IEEE2024} and
references therein.  See also recent studies
\cite{BL1,ICEAA2024_BEN,ICEAA2024_LB} where authors have reported
first results for the detection of breast cancer and malignant
melanoma (MM) using variational optimization approach analyzed in this
work. 
It is worth noting that
microwave medical imaging, when based solely on boundary measurements
of backscattered electric waves at frequencies around 1 GHz,
represents a non-invasive imaging technique.
Thus, it is very attractive
addition to the existing imaging technologies like X-ray mammography
\cite{noninvasive,Pastorino}, ultrasound \cite{gonch1,gonch2} and MRI
imaging \cite{tomography} .  
 We have performed numerical tests on the reconstruction of  MM using two of the
 real-life  MM models  developed in the recent 
 work \cite{ICEAA2025_LB} with realistic weighted dielectric properties of MM at 6 GHz taken from
 \cite{ICEAA2024_BEN,IEEE2024}.

The main contributions of our paper
can be summarized as follows:

\begin{itemize}

\item Derivation of the weak formulation of the Lagrangian and  optimality conditions of the Lagrangian in the weak form.

\item Derivation of stability estimate for the adjoint problem.

\item  Proof the Fr\'echet differentiability of the regularized Tikhonov's functional in the weak form.

\item Proof of the existence of the minimizers for the non-regularized and regularized  Tikhonov's functionals.

\item  Two- and three-dimensional   numerical tests  which validate developed variational optimization technique
  via CGA and ACGA algorithms.

\end{itemize}

An outline of the work is as follows: in section \ref{sec:model} we
present the mathematical model and  formulate  CIP.
Section \ref{sec:tikh}  introduces Tikhonov's functional and the Lagrangian
 in the weak form and shows proofs for optimality conditions of the Lagrangian in a weak form.
Section \ref{sec:stability} formulates energy estimate for the
forward problem and establishes stability result for the adjoint
problem.
Section \ref{sec:analysis}
 presents mathematical analysis of the optimization problem and Section
 \ref{sec:existence} presents proof for the existence of the minimizer.
Section \ref{sec:algorithms}  presents CGA and ACGA algorithms and
  Section \ref{sec:numex} shows
  two- and three-dimensional
numerical
examples of reconstruction of both dielectric permittivity  and conductivity  functions.
 Finally, section \ref{sec:concl}
discusses obtained results and future research.

\section{The mathematical model}
\label{sec:model}

Let us consider the Cauchy problem for Maxwell's equations in $\R^d, \, d= 2,3,$ in inhomogeneous conductive media:
\begin{subnumcases}{}\label{system1}
    \pt B = - \nabla \times E  & for $ (x, t) \in \RT $, \label{fara1}\\
    \pt D = \nabla \times H - \sigma E - J & for $ (x, t) \in \RT $, \label{ampere1}\\
    \nabla \cdot D = 0 & for $(x, t) \in \RT$, \label{gauss1}\\
    E(x, 0) = f_0, ~ \pt E(x, 0) = f_1 & for $x \in \R^d$, \label{initcond1} 
\end{subnumcases}
where $B$, $E$, $D$, $H$ and $J$ are time-dependent vector fields in $\mathbb{R}^d$ representing the magnetic flux density, the electric field, the electric flux density, the magnetic field and the electric current density respectively, and $\sigma$ is a spatial function representing the conductivity. We also have vector valued $f_0$ and $f_1$ which are arbitrary initial conditions.

This article will handle linear and isotropic materials, which implies 
\begin{equation}
    D = \varepsilon E ~ \text{and} ~ B = \mu H, 
\end{equation}
where the electric permittivity $\varepsilon(x) = \varepsilon_0\varepsilon_r(x)$ and the magnetic permeability $\mu(x) = \mu_0\mu_r(x)$ are spatial scalar functions. Now we can rewrite \eqref{system1} in terms of $E$, $H$ and $J$ as
\begin{subnumcases}{}\label{system2}
    \mu \pt H = - \nabla \times E  & for $ (x, t) \in \RT $, \label{fara2}\\
    \varepsilon \pt E = \nabla \times H - \sigma E - J & for $ (x, t) \in \RT $, \label{ampere2}\\
    \nabla \cdot (\varepsilon E) = 0 & for $(x, t) \in \RT$, \label{gauss2}\\
    E(x, 0) = f_0, ~ \pt E(x, 0) = f_1 & for $x \in \R^d$, \label{initcond2} 
\end{subnumcases}
From here we will take the time-derivative of \eqref{ampere2} to be able to merge it with \eqref{fara2}. This gives us the equation
\begin{equation}\label{firsteq}
    \varepsilon \ptt E = - \nabla \times (\frac{1}{\mu} \nabla \times E) - \sigma \pt E- \pt J.  
\end{equation}
Moving all terms dependent on $E$ of the \eqref{firsteq} to the left side  after merging the first two equations we end up with the system
\begin{subnumcases}{}\label{system3}
    \varepsilon \ptt E + \sigma \pt E + \nabla \times (\frac{1}{\mu} \nabla \times E) = -\pt J  & for $ (x, t) \in \RT $, \label{system3eq1}\\
    \nabla \cdot (\varepsilon E) = 0 & for $(x, t) \in \RT$, \label{gauss3}\\
    E(x, 0) = f_0, ~ \pt E(x, 0) = f_1 & for $x \in \R^d$, \label{initcond3} 
\end{subnumcases}

In this article we will consider solution of CIP in the non-magnetic object, or assume that $\mu_r \equiv 1$, which is roughly the case for human tissues. We will also note the the theoretical connection between $\varepsilon_0$, $\mu_0$ and the speed of light $c$, namely $c = (\varepsilon_0 \mu_0)^{-1/2}$. This allows us to rewrite our system once again into 
\begin{subnumcases}{}\label{system4}
    \frac{\varepsilon_r}{c^2} \ptt E + \mu_0 \sigma \pt E + \nabla \times \nabla \times E = -\mu_0 \pt J & for $ (x, t) \in \RT $, \label{system4eq1}\\
    \nabla \cdot (\varepsilon_r E) = 0 & for $(x, t) \in \RT$, \label{gauss4}\\
    E(x, 0) = f_0, ~ \pt E(x, 0) = f_1 & for $x \in \R$, \label{initcond4} 
\end{subnumcases}
The issue with (\ref{system4}--d) is that $c^{-2}$ and $\mu_0$ are very small constants, the former even smaller than expected accuracy from double precision float points. To contend with this we will substitute $t := ct$, such that we can instead inspect
\begin{subnumcases}{}
    \varepsilon_r \ptt E + c \mu_0 \sigma \pt E + \nabla \times \nabla \times E = -c \mu_0\pt J  & for $ (x, t) \in \RT $, \label{system5eq1}\\
    \nabla \cdot (\varepsilon_r E) = 0 & for $(x, t) \in \RT$, \label{gauss5}\\
    E(x, 0) = f_0, ~ \pt E(x, 0) = f_1 & for $x \in \R$, \label{initcond5} 
\end{subnumcases}

\textit{Remark: } For the convenience, since we are interested in reconstruction of $\varepsilon_r$ and $\sigma$, we will simplify the notation and omit the subscript $r$ from $\varepsilon_r$ and include the constants in $\sigma$. Thus $\varepsilon := \varepsilon_r$ (which should not be confused with the absolute electric permittivity) and $\sigma := c \mu_0 \sigma$ from now. Similarly, we will let $\pt J := c \mu_0 \pt J$. 

For this application we will restrict ourselves to a bounded, convex domain $\Omega \subset \R^d$ with the smooth boundary $\Gamma$.
We will also denote the corresponding space-time domain as $\Omega_T := \Omega \times (0,T)$ with the space-time boundary $\GammaT := \Gamma \times (0,T) $, where $T > 0$ is the final time. Further, we will consider the stabilized model problem by introducing the Coulomb gauge-type term \eqref{gauss5} in \eqref{system5eq1} (see details in \cite{Ass,div_cor}) and thus consider 
\begin{subnumcases}{}\label{forward}
    \varepsilon \ptt E + \sigma \pt E - \Delta E - \nabla \nabla \cdot (\varepsilon - 1) E = -\pt J  & for $ (x, t) \in \OT $, \label{system6eq1}\\
    E(x, 0) = f_0, ~ \pt E(x, 0) = f_1 & for $x \in \Omega$, \label{initcond6} \\
    \pn E = 0 & for $(x, t) \in \GammaT$. \label{boundcond6}
\end{subnumcases}
In recent work \cite{BR2} it was shown that under sufficient conditions on $\varepsilon$ and $\sigma$ the system above approximates the original Maxwell's equations well. We denote the task of finding the field $E$ as the forward problem. 

Before we state the inverse problem we will introduce the domain decomposition (see \cite{BL1}). We will assume $\overline{\Omega} = \OFDM \cup \OFEM$ and $\OFEM \subset \Omega$ as well as $\partial \OFEM \subset \OFDM$. For
the sake of
convenience  we will also introduce $\OIN := \Omega \setminus \OFDM$. With this setup we will add some realistic assumptions on $\sigma$ and $\varepsilon$ in terms of two  parameters sets
\begin{equation}\label{coeffs}
    \begin{aligned}
        &\varepsilon \in C_\varepsilon := \{ \varepsilon \in C^2(\overline{\Omega}) \, : \, \varepsilon(x) = 1 \quad \forall x \in \OFDM, \,  1 \leq \varepsilon(x) \leq d_1 \quad \forall x \in \OIN \}, \\
        &\sigma \in C_\sigma := \{ \sigma \in C^2(\overline{\Omega}) \, : \, \sigma(x) = 1 \quad \forall x \in \OFDM, \, 1 \leq \sigma(x) \leq d_2 \quad \forall x \in \OIN \},
    \end{aligned}
\end{equation}
where $d_1 \geq 1$ and $d_2 \geq 0$ are known real constants. We can now state the inverse problem. 

\textbf{Inverse problem (IP): }\textit{Assume that the functions $\varepsilon\in C_\varepsilon$ and $\sigma \in C_\sigma$. 
For a given observation function  $\Etobs \in [L_2(\GammaT)]^3$ of the electric field
determine $\varepsilon(x)$ and $\sigma(x)$ for $x \in \OIN$ such that  }
\begin{equation}
    E(x,t) = \Etobs \quad \forall (x,t) \in \GammaT.
\end{equation}
Here, $E$ is the electric field which satisfy to the system (\ref{system6eq1}--c) and $\Etobs$ is some known boundary measurements.

\textit{Remark: }Note that $\Etobs $ may only be defined on parts of $\GammaT$, which would correspond to missing data in applications.
Usually, this function contains the noise level $\delta$ which involves ill-posedness of the solution of our IP.
Let $E_{\rm obs }^*$ denote noiseless exact observations at $\GammaT$ and
  \begin{equation}\label{noisyset}
  B_\delta[E_{\rm obs }^*] = \{ E:|| E  - E_{\rm obs }^*  || \leq \delta \}.
  \end{equation}
We assume that data  $ \Etobs  \in B_\delta[E_{\rm obs }^*]$.
In the next section we present the numerical method for  solution of ill-posed inverse problem.


\section{Tikhonov's functional and Lagrangian}

\label{sec:tikh}

Before we move on to defining the corresponding Tikhonov functional and Lagrangian for our problem we will introduce some simplifying notation. We will let the $L^2$ product of two functions $f, g : \R^d \rightarrow \R^d$ over some domain $X$ be denoted by
\begin{equation*}
    (f,g)_X := \int_X f g \, \mathrm{d}X,
\end{equation*}
and the induced norm by $\|\cdot\|_X$. If the $L^2$ in question happens to be a surface integral, we empathize this with brackets, $\langle f, g \rangle_X$. Since $X = \Omega$ and $X = \Gamma$ will occur frequently we will omit this subscript in those cases. If $X$ is a set in both space and time we emphasize the $L^2$ product with double parentheses, $((f, g))_X$ (or double brackets for surface integrals, $\langle \langle f, g \rangle \rangle_X$), and again in the case where $X = \OT$ or $X = \GammaT$ we omit the subscript, again for the sake of brevity. Finally, we will let weighted products and norms with a scalar function $\phi \geq 0, \, \phi \neq 0$ be flagged with a subscript as well, e.g. $(f, g)_{\phi, X} := (\phi f, g)_X$ and $\| \cdot \|_{\phi,X}$.  

Now we will move on to define the regularized Tikhonov functional as 
\begin{equation}\label{tikh}
    F(\varepsilon, \sigma) := F(E(\varepsilon, \sigma), \varepsilon, \sigma) = \frac{1}{2} \| E - \Etobs \|^2_{\GammaT} + \frac{\gamma_{\varepsilon}}{2} \| \varepsilon - \varepsilon^0 \|^2 + \frac{\gamma_{\sigma}}{2} \| \sigma - \sigma^0 \|^2 , 
\end{equation}
where $\gamma_{\varepsilon}, \, \gamma_{\sigma}$ are regularization parameters, $\dobs$ is a $\delta$-function with support in some observation points $x_\mathrm{obs}$, and $\varepsilon^0, \, \sigma^0$ are initial guesses for $\varepsilon$ and $\sigma$, respectively. By minimizing the Tikhonov functional we have essentially turned the coefficient inverse problem into an optimization problem.

However, since $E$ is an implicit function of both $\varepsilon$ and $\sigma$, it is quite difficult to minimize the Tikhonov functional using gradient methods. Therefore we introduce the corresponding Lagrangian, but before we do so we will define some relevant function spaces for $E$ and $\lambda$. Let 
\begin{equation}
    \begin{aligned}
        &H^1_E := \{ E \in [H^1(\OT)]^d \, : \, E (x,0) = f_0(x),\, \pt E (x,0) = f_1(x) \quad \forall x \in \Omega \}, \\
        &H^1_\lambda := \{ \lambda \in [H^1(\OT)]^d \, : \, \lambda (x,T) = 0,\, \pt \lambda (x,T) = 0 \quad \forall x \in \Omega \}, \\
        &U := H^1_E \times H^1_\lambda \times C_\varepsilon \times C_\sigma,  \\
    \end{aligned}
\end{equation}
Next, we will define the Lagrangian, which in strong form looks like 
\begin{equation}\label{Lstrong}
    L(E, \lambda, \varepsilon, \sigma ) = F(E, \varepsilon, \sigma) + ((\varepsilon \ptt E + \sigma \pt E - \Delta E - \nabla\nabla \cdot (\varepsilon -1)E + \pt J , \lambda)). 
\end{equation}
However, to assume lower regularity on our solution we will commonly use the weak formulation of $L(u)$ which we present in the following Lemma.

\begin{lemma}

The weak form of the Lagrangian $L(u)$ given in \eqref{Lstrong}
is
\begin{equation}\label{Lweak}
    \begin{aligned}
    L(u) :&= F(E,\varepsilon,\sigma) - (f_1 , \lambda (x,0))_\varepsilon - (( \pt E, \pt \lambda))_\varepsilon - (f_0, \lambda (x,0))_\sigma \\
    &- (( E, \pt \lambda ))_\sigma 
    + ((\nabla E, \nabla \lambda )) + (( \nabla \cdot (\varepsilon - 1) E, \nabla \cdot \lambda )) + (( \pt J, \lambda ))  
    \end{aligned}
\end{equation}
for $u := (E, \lambda, \varepsilon, \sigma) \in U$.

\end{lemma}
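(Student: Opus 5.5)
The plan is to start from the strong form \eqref{Lstrong}, assuming first that $E$ and $\lambda$ are smooth enough (say $[H^2(\OT)]^d$) for every term to make sense. I will integrate each term of the space--time inner product $((\cdot,\lambda))$ by parts, once in time or once in space. All boundary contributions should then vanish or reduce to the initial-data terms, by the constraints built into $H^1_E$, $H^1_\lambda$, $C_\varepsilon$ and by the boundary condition \eqref{boundcond6}. Finally, I will observe that the resulting expression involves only first derivatives. It is therefore well defined on $U$, and it is taken as the definition of $L(u)$ there.

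\textbf{Time terms.} Since $\varepsilon$ and $\sigma$ do not depend on $t$, integration by parts in $t$ gives
\begin{equation*}
((\varepsilon \ptt E,\lambda)) = (\varepsilon\pt E(\cdot,T),\lambda(\cdot,T)) - (\varepsilon \pt E(\cdot,0),\lambda(\cdot,0)) - ((\pt E,\pt\lambda))_\varepsilon .
\end{equation*}
Here $\lambda(\cdot,T)=0$ and $\pt E(\cdot,0)=f_1$, so this equals $-(f_1,\lambda(x,0))_\varepsilon-((\pt E,\pt\lambda))_\varepsilon$. In the same way,
\begin{equation*}
((\sigma\pt E,\lambda)) = -(f_0,\lambda(x,0))_\sigma-((E,\pt\lambda))_\sigma ,
\end{equation*}
using $E(\cdot,0)=f_0$. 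The condition $\pt\lambda(\cdot,T)=0$ is not needed at this stage; it is simply part of the adjoint space.

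\textbf{Space terms.} Green's formula on $\Omega$ for each fixed $t$ gives
\begin{equation*}
-((\Delta E,\lambda))=((\nabla E,\nabla\lambda))-\langle\langle \pn E,\lambda\rangle\rangle ,
\end{equation*}
and the boundary term vanishes by \eqref{boundcond6}. Similarly,
\begin{equation*}
-((\nabla\nabla\cdot(\varepsilon-1)E,\lambda)) = ((\nabla\cdot(\varepsilon-1)E,\nabla\cdot\lambda)) - \langle\langle \nabla\cdot((\varepsilon-1)E),\lambda\cdot n\rangle\rangle .
\end{equation*}
This boundary term vanishes for a different reason: $\varepsilon\in C_\varepsilon$ gives $\varepsilon\equiv1$ on $\OFDM$, which contains a neighbourhood of $\Gamma$ (since $\partial\OFEM\subset\OFDM$ and $\OIN$ is interior). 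Hence $(\varepsilon-1)E$ vanishes identically near $\Gamma$, and so does its divergence. The source term $((\pt J,\lambda))$ is unchanged. Adding these pieces to $F$ yields exactly \eqref{Lweak}.

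\textbf{Main obstacle.} The delicate point is justifying the passage from smooth functions to $U$. For $E,\lambda\in[H^1(\OT)]^d$, the strong form is meaningless, and the trace $\pt E(x,0)$ is not a priori defined for a general $H^1$ function. I would handle this by noting that every term of \eqref{Lweak} is continuous with respect to the $H^1$ norms, using the boundedness of $\varepsilon,\sigma\in C^2(\overline\Omega)$ and the trace theorem for $\lambda(\cdot,0)$. The identity holds on the dense smooth subspace, so \eqref{Lweak} is the unique continuous extension. The initial condition on $\pt E$ enters only through the datum $f_1$, so the weak form no longer requires that trace. I would state this as the meaning of \emph{weak form} in the lemma rather than attempt a sharper regularity statement.
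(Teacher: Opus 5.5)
Your proposal is correct and follows the same route as the paper. Both integrate the strong form by parts once in time and once in space, and both kill the boundary terms using $E(\cdot,0)=f_0$, $\pt E(\cdot,0)=f_1$, $\lambda(\cdot,T)=0$, the Neumann condition, and $\varepsilon\equiv 1$ near $\Gamma$; the paper writes the same chain of equalities in reverse, for $E\in H^1_E\cap H^2(\OT)$. Your explicit argument that $\langle\langle \nabla\cdot((\varepsilon-1)E),\lambda\cdot n\rangle\rangle$ vanishes, and your continuity/density remark for passing from $H^2$ functions to $U$, only make explicit what the paper leaves implicit.
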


\begin{proof}
    We can prove that the weak form of the Lagrangian simply consists of the Tikhonov functional plus the weak formulation of the forward problem
 obtained via integration by parts of \eqref{Lstrong}.
If we assume $H^2$ regularity on $E$ and use conditions \eqref{coeffs}  for functions $\varepsilon,\sigma$ together with  $\lambda(\cdot,T) = \partial_t \lambda(\cdot,T) = 0$, this is shown as follows:
    \begin{equation}
    \begin{aligned}
    L (E, \lambda, \varepsilon, \sigma) =& F(E,\varepsilon,\sigma) - (f_1 , \lambda (x,0))_\varepsilon - (( \pt E, \pt \lambda))_\varepsilon - (f_0, \lambda (x,0))_\sigma \\
    -& (( E, \pt \lambda ))_\sigma 
    + ((\nabla E, \nabla \lambda )) + (( \nabla \cdot (\varepsilon - 1) E, \nabla \cdot \lambda )) + (( \pt J, \lambda ))  \\
    =& F(E, \varepsilon, \sigma) + (\pt E , \lambda)_{\varepsilon}|_{t = 0}^T - ((\pt E, \pt \lambda ))_\varepsilon + (E, \lambda)_\sigma |_{t = 0}^T \\
    -& ((E, \pt \lambda))_\sigma 
    - \langle\langle \pn E, \lambda \rangle\rangle + ((\nabla E , \nabla \lambda )) - \langle\langle \nabla \cdot (\varepsilon -1 ) E, n\cdot \lambda\rangle\rangle \\
    +& ((\nabla \cdot (\varepsilon -1)E, \nabla \cdot \lambda)) 
    + ((\pt J, \lambda)) \\
    =&F(E, \varepsilon, \sigma) + ((\varepsilon \ptt E + \sigma \pt E - \Delta E - \nabla \nabla \cdot (\varepsilon - 1) E + \pt J, \lambda)),
    \end{aligned}
\end{equation}
which holds for all $\lambda \in H^1_\lambda$ and all $E \in H^1_E \cap H^2(\OT)$ with homogeneous Neumann conditions.
\end{proof}

\textit{Remark: } One can easily see that if $E := E(\varepsilon, \sigma) \in H^1_E$ is a weak solution to the forward problem we have $L(E, \lambda, \varepsilon, \sigma) = F(E, \varepsilon, \sigma)$ for all $(E,\lambda, \varepsilon, \sigma)  \in  U$.


To minimize $L$ we look for stationary points $u = (E,\lambda,\varepsilon,\sigma)  $ such that for all $\bar{u} = (\bar{E}, \bar{\lambda}, \bar{\varepsilon}, \bar{\sigma})$ 
\begin{equation}
    L'(u; \bar{u}) = 0,
\end{equation}
where $L'$ is the Fréchet derivative of $L$. The statement above is equivalent to 
\begin{equation}\label{Loptcond}
    L'(u; \bar{u}) = \left(\pdv{L}{E}(u; \bar{E}),\pdv{L}{\lambda}(u; \bar{\lambda}),\pdv{L}{\varepsilon}(u; \bar{\varepsilon}),\pdv{L}{\sigma}(u; \bar{\sigma})\right) =  0.
\end{equation}

\begin{lemma}

The optimality conditions \eqref{Loptcond} in the weak form are given by
following expressions:
\begin{equation} \label{optcond1}
    \begin{aligned}
        \pdv{L}{E}(u; \bar{E}) &= \langle\langle E - \Etobs, \bar{E}\rangle \rangle_{\GammaT}  -
	(( \pt \bar{E}, \pt \lambda))_\varepsilon 
        - (( \bar{E}, \pt \lambda ))_\sigma 
    + ((\nabla \bar{E}, \nabla \lambda )) \\
    &+ (( \nabla \cdot (\varepsilon - 1) \bar{E}, \nabla \cdot \lambda )) = 0,
 \end{aligned}
\end{equation}
\begin{equation} \label{optcond2}
\begin{aligned}
        \pdv{L}{\lambda}(u; \bar{\lambda}) &= -(f_1, \bar{\lambda}(x,0))_\varepsilon - ((\pt E, \pt \bar{\lambda} ))_\varepsilon - (f_0 , \bar{\lambda} (x,0))_\sigma - ((E, \pt \bar{\lambda} ))_\sigma \\
        &+ ((\nabla E, \nabla \bar{\lambda} )) + ((\nabla \cdot (\varepsilon - 1)E, \nabla \cdot \bar{\lambda})) + ((\pt J , \bar{\lambda})) = 0, \\
\end{aligned}
\end{equation}
\begin{equation} \label{optcond3}
\begin{aligned}
        \pdv{L}{\varepsilon}(u; \bar{\varepsilon}) &= \gamma_{\varepsilon}(\varepsilon-\varepsilon^0,\bar{\varepsilon}) - (\bar{\varepsilon}f_1 , \lambda (x,0)) - ((\bar{\varepsilon} \pt E, \pt \lambda)) +(( \nabla \cdot \bar{\varepsilon} E, \nabla \cdot \lambda )) = 0, \\
\end{aligned}
\end{equation}
\begin{equation} \label{optcond4}
 \begin{aligned}
        \pdv{L}{\sigma}(u; \bar{\sigma}) &= \gamma_{\sigma}(\sigma - \sigma^0, \bar{\sigma}) - (\bar{\sigma}f_0, \lambda (x,0))- ((\bar{\sigma} E, \pt \lambda )) = 0.
    \end{aligned}
\end{equation}

\end{lemma}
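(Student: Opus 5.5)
Since each of the four optimality conditions in \eqref{Loptcond} is a partial Gâteaux (directional) derivative of the weak Lagrangian \eqref{Lweak}, we will compute them directly from the definition
\begin{equation*}
\pdv{L}{E}(u;\bar E) = \lim_{\alpha\to 0}\frac{L(E+\alpha\bar E,\lambda,\varepsilon,\sigma)-L(u)}{\alpha},
\end{equation*}
and analogously in $\lambda$, $\varepsilon$, $\sigma$. The variations must keep $u+\alpha\bar u$ in $U$. For $\bar E$ this means homogeneous initial data $\bar E(x,0)=\pt\bar E(x,0)=0$, so that $E+\alpha\bar E\in H^1_E$. For $\bar\lambda$ we take the same space $H^1_\lambda$, which is linear. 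For $\bar\varepsilon,\bar\sigma$ we take admissible directions vanishing on $\OFDM$, so the perturbed coefficients stay in $C_\varepsilon$, $C_\sigma$.

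The key structural observation is how $L$ depends on each argument. In $\lambda$ it is affine, because $F$ does not depend on $\lambda$. In $E$ it is quadratic through $\frac12\|E-\Etobs\|^2_{\GammaT}$ and affine in all other terms. In $\varepsilon$ and $\sigma$ it is quadratic through the regularization terms and affine elsewhere, since the weights $(\cdot,\cdot)_\varepsilon$, $(\cdot,\cdot)_\sigma$ and the factor $(\varepsilon-1)$ enter linearly. We therefore expand $L(u+\alpha\bar u)-L(u)$ in powers of $\alpha$, divide by $\alpha$, and let $\alpha\to0$. The $O(\alpha^2)$ remainders are $\frac{\alpha^2}{2}\|\bar E\|^2_{\GammaT}$, $\frac{\gamma_\varepsilon\alpha^2}{2}\|\bar\varepsilon\|^2$ and $\frac{\gamma_\sigma\alpha^2}{2}\|\bar\sigma\|^2$, and they vanish in the limit.

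We carry this out term by term in the following order.

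In $\lambda$ we simply replace $\lambda$ by $\bar\lambda$ in every term, which gives \eqref{optcond2}.

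In $E$, the data term gives $\langle\langle E-\Etobs,\bar E\rangle\rangle_{\GammaT}$. The terms $-(f_1,\lambda(x,0))_\varepsilon$, $-(f_0,\lambda(x,0))_\sigma$ and $((\pt J,\lambda))$ do not involve $E$, so they drop out. Here we must be careful that the initial-data terms are genuinely independent of $E$: they came from integrating by parts in time, and they are fixed by the constraint $E\in H^1_E$. The remaining bilinear terms give \eqref{optcond1}.

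In $\varepsilon$, the regularization term contributes $\gamma_\varepsilon(\varepsilon-\varepsilon^0,\bar\varepsilon)$. The weighted products $-(f_1,\lambda(x,0))_\varepsilon-((\pt E,\pt\lambda))_\varepsilon$ are linear in the weight and give $-(\bar\varepsilon f_1,\lambda(x,0))-((\bar\varepsilon\pt E,\pt\lambda))$. Because $(\varepsilon+\alpha\bar\varepsilon-1)E=(\varepsilon-1)E+\alpha\bar\varepsilon E$, the divergence term contributes $((\nabla\cdot\bar\varepsilon E,\nabla\cdot\lambda))$. Together these give \eqref{optcond3}.

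In $\sigma$, only the regularization term and the two $\sigma$-weighted products depend on $\sigma$, and they give \eqref{optcond4}.

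Finally, setting each derivative to zero for all admissible directions is exactly the stationarity condition \eqref{Loptcond}.

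The main obstacle is justifying that these directional derivatives are well defined, and that they are Fréchet rather than merely Gâteaux derivatives, which is what \eqref{Loptcond} asserts. Every term is a bounded multilinear form on $U$: $E,\lambda\in H^1(\OT)$, the coefficients are in $C^2(\overline\Omega)$, and the trace theorem controls $\|E\|_{\GammaT}$ and $\lambda(x,0)$. Because of this, each increment equals the stated linear term plus a remainder bounded by $C\|\bar u\|^2$, and Fréchet differentiability follows. A further subtle point is the term $\nabla\cdot(\bar\varepsilon E)$: it requires $\bar\varepsilon\in C^1$ at least, which the assumed $C^2$ regularity provides. A full proof of differentiability of $F(\varepsilon,\sigma)$ as a function of the coefficients alone is deferred to the later analysis section. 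Here we only differentiate $L$ treating its four arguments as independent.
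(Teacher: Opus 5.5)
Your proposal is correct and follows essentially the same route as the paper. Like the paper, you vary each of $E,\lambda,\varepsilon,\sigma$ independently (the all-at-once approach), expand $L(u+\bar u)-L(u)$, read off the linear part, and discard the quadratic remainders $\frac12\|\bar E\|^2_{\GammaT}$, $\frac{\gamma_\varepsilon}{2}\|\bar\varepsilon\|^2$, $\frac{\gamma_\sigma}{2}\|\bar\sigma\|^2$. Your extra care goes beyond what the paper writes but does not change the argument: you specify admissible directions (homogeneous initial data for $\bar E$, coefficient directions vanishing on $\OFDM$), and you upgrade the Gâteaux limit to a Fréchet derivative via boundedness of the multilinear terms.
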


\begin{proof}

To find the Frech\'{e}t derivative (\ref{Loptcond}) of the
 Lagrangian (\ref{Lweak}) we consider $L(u + \bar{u}) - L(u)~
 \forall \bar{u} \in U$ and single out the linear part of the
 obtained equality with respect to $ \bar{u}$.  In our derivation of
 the Frech\'{e}t derivative we assume that  all
 functions $u=(E,\lambda, \varepsilon, \sigma) \in U$ 
in the Lagrangian
 (\ref{Lweak})  can be varied independently on each other called all-at-once
  approach.

Let us 
start first  with  derivation of the derivative with respect to $E$. Firstly, we can write
\begin{equation}\label{firstordE}
\begin{aligned}
    L(E+\bar{E}, \lambda, \varepsilon, \sigma) &= F(E+\bar{E},\varepsilon,\sigma) - (f_1 , \lambda (x,0))_\varepsilon - (( \pt (E+\bar{E}), \pt \lambda))_\varepsilon \\
    &- (f_0, \lambda (x,0))_\sigma - (( (E+\bar{E}), \pt \lambda ))_\sigma 
    + ((\nabla (E+\bar{E}), \nabla \lambda )) \\
    &+ (( \nabla \cdot (\varepsilon - 1) (E+\bar{E}), \nabla \cdot \lambda )) + (( \pt J, \lambda )).
\end{aligned}
\end{equation}
Here we note that for the Tikhonov functional we have:
\begin{equation}\label{FreTikh}
\begin{split}
    F(E+\bar{E}, \varepsilon, \sigma) &= \frac{1}{2} \| E - \Etobs + \bar{E}\|^2_{\GammaT} + \frac{\gamma_{\varepsilon}}{2} \| \varepsilon - \varepsilon^0 \|^2 + \frac{\gamma_{\sigma}}{2} \| \sigma - \sigma^0 \|^2 \\
    &= \frac{1}{2} \| E - \Etobs \|^2_{\GammaT} + \langle \langle E - \Etobs, \bar{E}\rangle\rangle_{\GammaT}  + \frac{1}{2} \| \bar{E} \|^2_{\GammaT} + \frac{\gamma_{\varepsilon}}{2} \| \varepsilon - \varepsilon^0 \|^2 \\
    &+ \frac{\gamma_{\sigma}}{2} \| \sigma - \sigma^0 \|^2 
    = F(E,\varepsilon,\sigma) + \langle \langle E - \Etobs, \bar{E} \rangle \rangle_{\GammaT} + \frac{1}{2} \| \bar{E} \|^2_{\GammaT}.
    \end{split}
\end{equation}

Inserting   in \eqref{firstordE} instead of $ F(E+\bar{E}, \varepsilon, \sigma)$  the right hand side of \eqref{FreTikh}
 we get:
\begin{equation}\label{firstordEnew}
\begin{aligned}
    L(E+\bar{E}, \lambda, \varepsilon, \sigma) &=
 F(E,\varepsilon,\sigma) + \langle \langle E - \Etobs, \bar{E} \rangle \rangle_{\GammaT} + \frac{1}{2} \| \bar{E} \|^2_{\GammaT} \\
&- (f_1 , \lambda (x,0))_\varepsilon - (( \pt (E+\bar{E}), \pt \lambda))_\varepsilon \\
    &- (f_0, \lambda (x,0))_\sigma - (( (E+\bar{E}), \pt \lambda ))_\sigma 
    + ((\nabla (E+\bar{E}), \nabla \lambda )) \\
    &+ (( \nabla \cdot (\varepsilon - 1) (E+\bar{E}), \nabla \cdot \lambda )) + (( \pt J, \lambda )).
\end{aligned}
\end{equation}

Thus, if we separate the terms with $\bar{E}$ in \eqref{firstordEnew} and recognize terms corresponding to the Lagrangian $ L(E,\lambda,\varepsilon,\sigma)$
 we can rewrite it as 
\begin{align*}
    L(E+\bar{E}, \lambda, \varepsilon, \sigma) &= L(E,\lambda,\varepsilon,\sigma) + \langle \langle E - \Etobs, \bar{E} \rangle \rangle_{\GammaT} 
-
(( \pt \bar{E}, \pt \lambda))_\varepsilon \\
    &- (( \bar{E}, \pt \lambda ))_\sigma 
    + ((\nabla \bar{E}, \nabla \lambda )) + (( \nabla \cdot (\varepsilon - 1) \bar{E}, \nabla \cdot \lambda )) + \frac{1}{2} \| \bar{E} \|^2_{\GammaT} \\
    &= L(E,\lambda, \varepsilon, \sigma) + \pdv{L}{E} (E, \lambda, \varepsilon, \sigma; \bar{E}) + \mathcal{O}(\|\bar{E}\|^2).
\end{align*}

Now, considering   $ L(E+\bar{E}, \lambda, \varepsilon, \sigma) -  L(E,\lambda,\varepsilon,\sigma) $ and neglecting $\mathcal{O}(\|\bar{E}\|^2)$ we get optimality condition \eqref{optcond1}.

The three other derivatives are approached very similarly. The derivative with respect to $\lambda$ is simply a case of linearity of gradients and integrals,
\begin{align*}
    L(E,\lambda + \bar{\lambda}, \varepsilon, \sigma) &= F(E,\varepsilon,\sigma) - (f_1 ,(\lambda + \bar{\lambda}) (x,0))_\varepsilon - (( \pt E, \pt (\lambda+\bar{\lambda}))_\varepsilon \\
    &- (f_0, (\lambda+\bar{\lambda})(x,0))_\sigma - (( E, \pt (\lambda+\bar{\lambda}) ))_\sigma 
    + ((\nabla E, \nabla (\lambda+\bar{\lambda}) )) \\
    &+ (( \nabla \cdot (\varepsilon - 1) E, \nabla \cdot (\lambda+\bar{\lambda}) )) + (( \pt J, (\lambda+\bar{\lambda}) )) \\
    &= L(E,\lambda, \varepsilon, \sigma) -(f_1, \bar{\lambda}(x,0))_\varepsilon - ((\pt E, \pt \bar{\lambda} ))_\varepsilon - (f_0 , \bar{\lambda} (x,0))_\sigma \\
    &- ((E, \pt \bar{\lambda} ))_\sigma + ((\nabla E, \nabla \bar{\lambda} )) + ((\nabla \cdot (\varepsilon - 1)E, \nabla \cdot \bar{\lambda})) + ((\pt J , \bar{\lambda})) \\
    &= L(E, \lambda, \varepsilon, \sigma) + \pdv{L}{\lambda}(E, \lambda, \varepsilon, \sigma; \bar{\lambda})  + \mathcal{O}(\|\bar{\lambda}\|^2). 
\end{align*}

 Considering   now  $ L(E, \lambda + \bar{\lambda} , \varepsilon, \sigma) -
  L(E,\lambda,\varepsilon,\sigma) $ and neglecting $\mathcal{O}(\|\bar{\lambda}\|^2)$ we get optimality condition \eqref{optcond2}.

For the derivative with respect to $\varepsilon$ we have some second order terms from $F$,  and we get:
\begin{align*}
    L(E, \lambda, \varepsilon + \bar{\varepsilon}, \sigma) ) &= F(E,\varepsilon + \bar{\varepsilon},\sigma) - ((\varepsilon + \bar{\varepsilon})f_1 , \lambda (x,0)) - (((\varepsilon + \bar{\varepsilon}) \pt E, \pt \lambda)) \\
    &- (f_0, \lambda (x,0))_\sigma - (( E, \pt \lambda ))_\sigma 
    + ((\nabla E, \nabla \lambda )) \\
    &+ (( \nabla \cdot ((\varepsilon + \bar{\varepsilon}) - 1) E, \nabla \cdot \lambda )) + (( \pt J, \lambda ))  \\
    &= \frac{1}{2} \| E - \Etobs \|^2_{\GammaT} + \frac{\gamma_{\varepsilon}}{2} \| (\varepsilon + \bar{\varepsilon}) - \varepsilon^0 \|^2 + \frac{\gamma_{\sigma}}{2} \| \sigma - \sigma^0 \|^2 \\
    &- (f_1 , \lambda (x,0))_\varepsilon - (( \pt E, \pt \lambda))_\varepsilon - (f_0, \lambda (x,0))_\sigma - (( E, \pt \lambda ))_\sigma \\
    &+ ((\nabla E, \nabla \lambda )) + (( \nabla \cdot (\varepsilon - 1) E, \nabla \cdot \lambda )) + (( \pt J, \lambda )) \\
    &- (\bar{\varepsilon}f_1 , \lambda (x,0)) - ((\bar{\varepsilon} \pt E, \pt \lambda)) +(( \nabla \cdot \bar{\varepsilon} E, \nabla \cdot \lambda )) \\
    &= L(E,\lambda,\varepsilon,\sigma) + \gamma_{\varepsilon}(\varepsilon-\varepsilon^0,\bar{\varepsilon}) - (\bar{\varepsilon}f_1 , \lambda (x,0)) - ((\bar{\varepsilon} \pt E, \pt \lambda)) \\
    &+(( \nabla \cdot \bar{\varepsilon} E, \nabla \cdot \lambda )) + \frac{\gamma_{\varepsilon}}{2}\| \bar{\varepsilon}\|^2 \\
    &= L(E,\lambda,\varepsilon,\sigma) + \pdv{L}{\varepsilon} (E,\lambda,\varepsilon,\sigma; \bar{\varepsilon}) + \mathcal{O}(\|\bar{\varepsilon}\|^2).
\end{align*}
  Considering   now
$ L(E, \lambda, \varepsilon  + \bar{\varepsilon}, \sigma) -
  L(E,\lambda,\varepsilon,\sigma) $ and neglecting $\mathcal{O}(\|\bar{\varepsilon}\|^2)$ we get optimality condition \eqref{optcond3}.

The derivation of the derivative with respect to $\sigma$ is very similar, thus we omit some steps and write:
\begin{align*}
    L(E,\lambda, \varepsilon,\sigma + \bar{\sigma}) &= F(E,\varepsilon,\sigma+\bar{\sigma}) - (f_1 , \lambda (x,0))_\varepsilon - (( \pt E, \pt \lambda))_\varepsilon - ((\sigma+\bar{\sigma})f_0, \lambda (x,0)) \\
    &- (((\sigma+\bar{\sigma}) E, \pt \lambda )) 
    + ((\nabla E, \nabla \lambda )) + (( \nabla \cdot (\varepsilon - 1) E, \nabla \cdot \lambda )) + (( \pt J, \lambda )) \\
    &= L(E,\lambda,\varepsilon,\sigma) + \gamma_{\sigma}(\sigma - \sigma^0, \bar{\sigma}) - (\bar{\sigma}f_0, \lambda (x,0))- ((\bar{\sigma} E, \pt \lambda )) + \frac{\gamma_{\sigma}}{2} \|\bar{\sigma}\|^2 \\
    &= L(E,\lambda,\varepsilon,\sigma) + \pdv{L}{\sigma} (E,\lambda,\varepsilon,\sigma; \bar{\sigma}) + \mathcal{O} (\| \bar{\sigma}\|^2). 
\end{align*}
  Considering   now
$ L(E, \lambda, \varepsilon, \sigma + \bar{\sigma} ) -
  L(E,\lambda,\varepsilon,\sigma) $ and neglecting
   $\mathcal{O}(\| \bar{\sigma}  \|^2)$ we get the final optimality condition
   \eqref{optcond4}.

\end{proof}

It is quite simple to deduce that the optimality condition $ \pdv{L}{\lambda}(u; \bar{\lambda}) = 0$ for all $\bar{\lambda} \in H^1_\lambda$ is equivalent to the weak formulation of the forward problem, and the optimality condition $\pdv{L}{E}(u; \bar{E}) = 0$ for all $\bar{E} \in H^1_0$ also describes  the \textit{adjoint problem}. 

Interpreting the Fréchet derivative as a weak formulation we want to find $\lambda \in H^1(\OT)$ such that 
\begin{subnumcases}{}\label{adj}
    \varepsilon\ptt \lambda - \sigma \pt \lambda - \Delta \lambda  - (\varepsilon - 1) \nabla \nabla \cdot \lambda =0 & for $(x,t) \in \OT$, \label{adj1}\\
    \lambda(x,T) = \pt \lambda(x,T) = 0 & for $x \in \Omega$, \label{adj2}\\
    \pn \lambda = -(E - \Etobs) & for $(x,t) \in \GammaT$. \label{adj3}
\end{subnumcases}
Note that this problem is solved backwards in time, thus the termination conditions and the sign swapped time derivatives compared to the forward problem.

\section{Stability analysis for the forward and adjoint problems}

\label{sec:stability}

In this section we will present stability estimates for the forward problem as well as the adjoint problem. We refer to \cite{lad} for general approach of derivation of stability estimates for PDEs. The proof of the forward problem's stability is derived in \cite{BL2}.  

\begin{theorem}
Assume $\varepsilon \in C_\varepsilon$ and $ \sigma \in C_\sigma$. Let $\Omega \subset \mathbb{R}^{3}$ be a bounded domain with the piecewise smooth boundary $\partial \Omega$. For any $ t\in \left( 0,T\right)$ let $\Omega_{t}= \Omega\times \left( 0, t \right)$ and $\Gamma_{t} = \Gamma \times (0, t).$ Suppose that there exists a solution $E\in H^{2}\left( \Omega_{T}\right) $ of the model problem (\ref{forward}--c). Then the vector $E$ is unique and there exists a constant $C = C(\|\varepsilon\|_{C^2(\Omega)}, \| \sigma\|, t)$ such that the following energy estimate holds true:

\begin{equation}\label{energyestimate}
\begin{split}
\vert\vert\vert E (t)\vert\vert \vert^2  := & \left\Vert \partial _{t}E(t) \right\Vert_{\varepsilon}^{2} + \left\Vert E(t) \right\Vert_{\sigma}^{2} + \left\Vert \nabla E (t)\right\Vert^{2} + \left\Vert \nabla \cdot E(t) \right\Vert _{\varepsilon - 1}^{2} \\
&\leq C \left[ \int_t^T\| \pt J (\tau)\|^2 \, \dtau + \left\Vert f_{1} \right\Vert _{\varepsilon}^{2} + \left\Vert \nabla f_{0}\right\Vert ^{2} + \left\Vert f_{0}\right\Vert _{\sigma}^{2} + \left\Vert \nabla \cdot f_{0}\right\Vert_{\varepsilon - 1}^{2}\right].
\end{split}
\end{equation}
\end{theorem}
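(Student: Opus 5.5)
We use the standard energy method: test the forward equation \eqref{system6eq1} with $\pt E$, integrate over $\Omega_t$, and close with Gronwall's inequality. Since $E\in H^2(\Omega_T)$, all integrations by parts below are justified.

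\textbf{Energy identity.} Multiply \eqref{system6eq1} by $2\pt E$ and integrate over $\Omega_t$.
\begin{itemize}
\item The term $2((\varepsilon\ptt E,\pt E))_{\Omega_t}$ equals $\|\pt E(t)\|_\varepsilon^2-\|f_1\|_\varepsilon^2$.
\item For $-2((\Delta E,\pt E))_{\Omega_t}$, integrate by parts in space; the boundary term vanishes by \eqref{boundcond6}. This gives $\|\nabla E(t)\|^2-\|\nabla f_0\|^2$.
\item For $-2((\nabla\nabla\cdot(\varepsilon-1)E,\pt E))_{\Omega_t}$, integrate by parts. The boundary term vanishes because $\varepsilon=1$ in $\OFDM\supset\partial\OFEM$, so $(\varepsilon-1)E$ is supported away from $\Gamma$. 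This gives $2((\nabla\cdot(\varepsilon-1)E,\nabla\cdot\pt E))_{\Omega_t}$.
\item Expand $\nabla\cdot((\varepsilon-1)E)=(\varepsilon-1)\nabla\cdot E+\nabla\varepsilon\cdot E$. The first part produces $\|\nabla\cdot E(t)\|^2_{\varepsilon-1}-\|\nabla\cdot f_0\|^2_{\varepsilon-1}$. The remainder $2((\nabla\varepsilon\cdot E,\nabla\cdot\pt E))_{\Omega_t}$ is the main obstacle.
\end{itemize}

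\textbf{The $\sigma$-norm.} Testing with $\pt E$ gives the damping term $2((\sigma\pt E,\pt E))_{\Omega_t}\ge0$, but the left side of \eqref{energyestimate} contains $\|E(t)\|_\sigma^2$. We obtain it by writing
\[
\|E(t)\|_\sigma^2=\|f_0\|_\sigma^2+2((\sigma E,\pt E))_{\Omega_t}\le \|f_0\|^2_\sigma+C\int_0^t\vert\vert\vert E(\tau)\vert\vert\vert^2\,\dtau,
\]
using $\sigma\ge1$ and boundedness of $\sigma$. Adding this to the energy identity is how $\|f_0\|_\sigma^2$ appears on the right-hand side.

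\textbf{The source term.} Bound $2|((\pt J,\pt E))_{\Omega_t}|\le\int\|\pt J\|^2+\int\|\pt E\|^2_\varepsilon$, using $\varepsilon\ge1$.

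\textbf{The obstacle term.} We cannot simply absorb $\nabla\cdot\pt E$, since it is not controlled by the energy norm. We first integrate by parts in time:
\[
2((\nabla\varepsilon\cdot E,\nabla\cdot\pt E))_{\Omega_t}=2(\nabla\varepsilon\cdot E,\nabla\cdot E)\big|_0^t-2((\nabla\varepsilon\cdot\pt E,\nabla\cdot E))_{\Omega_t}.
\]
\begin{itemize}
\item The integral term is bounded by $\|\varepsilon\|_{C^1}\int_0^t(\|\pt E\|^2+\|\nabla E\|^2)$, using $|\nabla\cdot E|\le C|\nabla E|$.
\item The endpoint term at $t$ is bounded by $\delta\|\nabla E(t)\|^2+C_\delta\|\varepsilon\|_{C^1}^2\|E(t)\|^2$ with $\delta$ small, so the first part is absorbed into the left side. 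The second part is controlled by $\|E(t)\|_\sigma^2$ since $\sigma\ge1$, and hence again by $\|f_0\|_\sigma^2+C\int_0^t\vert\vert\vert E\vert\vert\vert^2$.
\item The endpoint term at $0$ is bounded by $\|f_0\|^2+\|\nabla f_0\|^2$.
\end{itemize}

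\textbf{Gronwall and uniqueness.} Collecting terms gives
\[
\vert\vert\vert E(t)\vert\vert\vert^2\le C\Big[\text{data}+\int\|\pt J\|^2\Big]+C\int_0^t\vert\vert\vert E(\tau)\vert\vert\vert^2\,\dtau .
\]
Gronwall's lemma then yields \eqref{energyestimate} with $C=C(\|\varepsilon\|_{C^2},\|\sigma\|,t)$, which grows like $e^{Ct}$. The $J$-integral written over $(t,T)$ in the statement is matched by bounding the integral over $(0,t)$ by the integral over $(0,T)$, or by the analogous time-reversed argument. Uniqueness follows by linearity: the difference of two solutions has zero data and zero source, so its energy vanishes.
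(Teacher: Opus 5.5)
Your energy argument is sound and follows the route the paper intends. The paper defers the forward proof to \cite{BL2} and mirrors it in its adjoint proof: test with $2\pt E$ on $\Omega_t$, integrate by parts using $\pn E=0$ and $\varepsilon\equiv 1$ near $\Gamma$, and close with Gr\"onwall. The only forward-specific difficulty is the cross term $2((\nabla\varepsilon\cdot E,\nabla\cdot\pt E))_{\Omega_t}$. It appears because $\varepsilon-1$ sits inside the divergence; in the adjoint term $(\varepsilon-1)\nabla\nabla\cdot\lambda$ the gradient of $\varepsilon$ falls harmlessly on $\pt\lambda$ instead. Your time integration by parts handles this term correctly and uses only $\|\nabla\varepsilon\|_\infty$. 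An alternative is to integrate by parts in space; there is no boundary term since $\nabla\varepsilon=0$ on $\Gamma$. This gives $2|((\nabla(\nabla\varepsilon\cdot E),\pt E))_{\Omega_t}|\le C\|\varepsilon\|_{C^2}\int_0^t\vert\vert\vert E(\tau)\vert\vert\vert^2\,\dtau$ with no endpoint terms, and it accounts for the $C^2$ norm in the constant.

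The flaw is your last step on the source term. Gr\"onwall gives you $\int_0^t\|\pt J(\tau)\|^2\,\dtau\le\int_0^T\|\pt J(\tau)\|^2\,\dtau$. The printed estimate would need control by $C\int_t^T\|\pt J(\tau)\|^2\,\dtau$, which is the smaller quantity, so that comparison runs the wrong way. No time-reversed argument is available either, since nothing is prescribed for $E$ at $T$.

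In fact the printed inequality is false. Take $f_0=f_1=0$ and a smooth $\phi\neq 0$ with compact support in $\Omega$. On $(0,t)$, define $-\pt J$ as the forward operator applied to $\tau^2\phi(x)$, and set $\pt J=0$ on $(t,T)$. Then $E=\tau^2\phi$ on $(0,t)$, so the right-hand side vanishes while $\vert\vert\vert E(t)\vert\vert\vert^2\ge t^4\|\phi\|^2>0$.

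The $\int_t^T$ is evidently a misprint carried over from the backward-in-time adjoint estimate. Consistent with the theorem's own definition $\Omega_t=\Omega\times(0,t)$, the intended term is $\int_0^t\|\pt J\|^2\,\dtau=\|\pt J\|^2_{\Omega_t}$, and that is exactly what your argument proves. You should state this correction explicitly rather than claim to have matched the printed bound.
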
 

The adjoint stability looks quite similar to the forward stability, both in terms of statement and proof. Thus we use the same ideas here for the proof as in \cite{BL2}. 

\begin{theorem}
    Assume $\varepsilon \in C_\varepsilon$ and $\sigma \in C_\sigma$ holds. Let $\Omega \subset \mathbb{R}^{3}$ be a bounded domain with the piecewise smooth boundary $\partial \Omega$. For any $ t\in \left( t,T\right)$ let $\Omega_{t}= \Omega\times \left( t, T \right)$ and $\Gamma_{t} = \Gamma \times (t, T).$ Suppose that there exists a solution $\lambda \in H^{2}\left( \Omega_{T}\right) $ of the adjoint problem (\ref{adj}--c). Then the vector $\lambda$ is unique and there exists a constant $C = C(\|\varepsilon\|_{C^2},  t)$ such that the following energy estimate holds true:
    \begin{equation}\label{energylambda}
        |||\lambda (t)|||^2 \leq C \|E - \Etobs \|_\GammaT^2 
    \end{equation}

\end{theorem}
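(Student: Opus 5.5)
We prove \eqref{energylambda} with the standard energy method, run backward in time, as in the proof of the forward estimate in \cite{BL2}. Throughout, $\Omega_t=\Omega\times(t,T)$, and $|||\lambda(t)|||^2$ is the quantity from \eqref{energyestimate} with $E$ replaced by $\lambda$.

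\textbf{Energy identity.} Multiply \eqref{adj1} by $-2\pt\lambda$ and integrate over $\Omega_t$. Using $\lambda(\cdot,T)=\pt\lambda(\cdot,T)=0$, the time-derivative terms give
\begin{equation*}
-2((\varepsilon\ptt\lambda,\pt\lambda))_{\Omega_t}=\|\pt\lambda(t)\|^2_\varepsilon, \qquad 2((\sigma\pt\lambda,\pt\lambda))_{\Omega_t}=2\|\pt\lambda\|^2_{\sigma,\Omega_t}\ge 0 .
\end{equation*}
The second term is a favourable dissipation term, since time runs backward. For the Laplacian, integration by parts in space and then in time gives
\begin{equation*}
2((\Delta\lambda,\pt\lambda))_{\Omega_t}=-\|\nabla\lambda(t)\|^2+2\langle\langle \pn\lambda,\pt\lambda\rangle\rangle_{\Gamma_t},
\end{equation*}
with $\pn\lambda=-(E-\Etobs)$ on the boundary by \eqref{adj3}. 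For the gauge term $2(((\varepsilon-1)\nabla\nabla\cdot\lambda,\pt\lambda))$ we move $\nabla$ onto $(\varepsilon-1)\pt\lambda$. The boundary term vanishes because $\varepsilon=1$ on $\OFDM\supset\partial\Omega$. This produces $\|\nabla\cdot\lambda(t)\|^2_{\varepsilon-1}$ plus a lower-order term $2((\nabla\cdot\lambda,\nabla\varepsilon\cdot\pt\lambda))$. That term is bounded by $C(\|\varepsilon\|_{C^1})\bigl(\|\nabla\lambda\|^2_{\Omega_t}+\|\pt\lambda\|^2_{\Omega_t}\bigr)$; note that $|\nabla\cdot\lambda|\le \sqrt d|\nabla\lambda|$. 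The term $\|\lambda(t)\|^2_\sigma$ in $|||\cdot|||$ is not produced by the equation. We add it using $\lambda(t)=-\int_t^T\pt\lambda\,\dtau$, so that $\|\lambda(t)\|^2_\sigma\le d_2 T\|\pt\lambda\|^2_{\Omega_t}$. Altogether,
\begin{equation*}
|||\lambda(t)|||^2\le C\int_t^T|||\lambda(\tau)|||^2\,\dtau+2\,|\langle\langle E-\Etobs,\pt\lambda\rangle\rangle_{\Gamma_t}| .
\end{equation*}

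\textbf{Main obstacle: the boundary term.} It involves $\pt\lambda$ on $\Gamma_t$, which the energy does not control. We first try integrating by parts in time to move the derivative onto the data:
\begin{equation*}
\langle\langle E-\Etobs,\pt\lambda\rangle\rangle_{\Gamma_t}=-\langle E-\Etobs,\lambda\rangle|_{t}-\langle\langle \pt(E-\Etobs),\lambda\rangle\rangle_{\Gamma_t} .
\end{equation*}
We then bound the traces by the trace inequality $\|\lambda\|_\Gamma^2\le C(\|\lambda\|^2+\|\nabla\lambda\|^2)$ and Young's inequality with a small parameter, absorbing $\delta\|\nabla\lambda(t)\|^2$ into the left-hand side. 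The catch is that this requires $\pt(E-\Etobs)\in L_2(\GammaT)$, which is stronger than the stated right-hand side. To reach exactly $\|E-\Etobs\|^2_{\GammaT}$ we would need a sharper trace regularity for $\pt\lambda$ on the boundary (hidden regularity of the Neumann wave problem), or else interpret the constant $C$ as absorbing the regularity of the data, consistent with the assumption $\lambda\in H^2(\Omega_T)$. This step is where we expect the argument to need care. The first choice is the time integration by parts with absorption, stating the extra assumption on the data if needed.

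\textbf{Grönwall and uniqueness.} Once the boundary term is controlled by $C\|E-\Etobs\|^2_{\GammaT}+\tfrac12|||\lambda(t)|||^2+C\int_t^T|||\lambda|||^2\,\dtau$, apply Grönwall's inequality backward in time on $(t,T)$. This gives $|||\lambda(t)|||^2\le Ce^{C(T-t)}\|E-\Etobs\|^2_{\GammaT}$, which is \eqref{energylambda}. Uniqueness follows by linearity. The difference of two solutions solves \eqref{adj} with zero Neumann data, so the estimate gives $|||\lambda(t)|||=0$ for all $t$; together with the terminal condition, the difference vanishes.
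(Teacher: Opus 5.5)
Your energy computation is essentially the paper's: multiplier $\pt\lambda$ on $\Omega\times(t,T)$, the boundary part of the gauge term removed by $\varepsilon=1$ near $\Gamma$, $\|\lambda(t)\|^2_\sigma$ recovered from $\lambda(T)=0$, and backward Gr\"onwall. There is one sign slip: since $\nabla\lambda(T)=0$, one has $2((\Delta\lambda,\pt\lambda))_{\Omega_t}=+\|\nabla\lambda(t)\|^2+2\langle\langle\pn\lambda,\pt\lambda\rangle\rangle_{\Gamma_t}$, and this is the sign your next inequality actually uses.

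The genuine gap is the one you flag yourself, and the proposal does not close it. Nothing bounds $\langle\langle E-\Etobs,\pt\lambda\rangle\rangle_{\Gamma_t}$ by $\|E-\Etobs\|^2_{\GammaT}$ plus absorbable terms. Your integration by parts in time is correct, but it proves a different estimate, $|||\lambda(t)|||^2\le C\|E-\Etobs\|^2_{H^1(0,T;L_2(\Gamma))}$. The pointwise term $\langle (E-\Etobs)(t),\lambda(t)\rangle_\Gamma$ also needs the $H^1$-in-time norm, via $H^1(0,T)\subset C([0,T])$. Letting $C$ ``absorb the regularity of the data'' is not admissible: $C$ may depend only on $\varepsilon$ and $t$, while $\Etobs$ is only in $L_2(\GammaT)$ and noisy. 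The hidden-regularity route is also unavailable. Hidden regularity concerns $\pn$ for Dirichlet problems; for Neumann problems, the trace $\pt\phi|_\Gamma$ of a finite-energy solution is in general not in $L_2(\GammaT)$.

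The paper's proof does not supply the missing step either. It first bounds $2\langle\langle E-\Etobs,\pt\lambda\rangle\rangle_{\Gamma_t}\le\|E-\Etobs\|^2_{\Gamma_t}+\|\pt\lambda\|^2_{\Gamma_t}$ and moves $\|\pt\lambda\|^2_{\Gamma_t}$ to the left-hand side with a minus sign. It then uses $\|\lambda\|^2_\Gamma\le\|\pt\lambda\|^2_{\Gamma_t}$ to replace $-\|\pt\lambda\|^2_{\Gamma_t}$ by $-\|\lambda\|^2_\Gamma$. That replaces a term on the smaller side of the inequality by a larger one, which is invalid; what is needed is an \emph{upper} bound for $\|\pt\lambda\|^2_{\Gamma_t}$.

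No such bound exists, and the estimate with only $\|E-\Etobs\|_{\GammaT}$ on the right fails in general for $d\ge 2$. Near $\Gamma$ the adjoint operator reduces to $\ptt-\Delta$ up to lower-order damping, so consider that model operator. Pairing $\lambda$ with a solution $\phi$ of the homogeneous Neumann problem gives $\frac{d}{d\tau}\bigl[(\pt\lambda,\pt\phi)+(\nabla\lambda,\nabla\phi)\bigr]=\langle\pn\lambda,\pt\phi\rangle$. An energy bound for $\lambda$ in terms of $\|\pn\lambda\|_{\Gamma_t}$ alone would therefore force $\|\pt\phi\|_{\Gamma_t}\le C\,|||\phi(t)|||$ for all finite-energy $\phi$. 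A frequency-$N$ beam of width $N^{-1/2}$ gliding along a flat part of $\Gamma$ violates this by a factor of order $N^{1/4}$. This is the sharp-regularity phenomenon of Lasiecka and Triggiani: for $d\ge 2$, $L_2$ Neumann data yield solutions strictly less regular than $H^1$. In one space dimension there are no glancing directions, and the bound you want does hold.

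So the honest conclusion of your argument is the $H^1(0,T;L_2(\Gamma))$ estimate, stated as such. Your uniqueness argument is fine, because with zero Neumann data the boundary term vanishes identically.
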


    \begin{proof}
        We begin our proof by multiplying \eqref{adj} by $2\pt \lambda$ and integrate over $\Omega_{t}$
        \begin{equation}
            ((\varepsilon\ptt \lambda - \sigma \pt \lambda - \Delta \lambda  - (\varepsilon - 1) \nabla \nabla \cdot \lambda, 2\pt\lambda ))_{\Omega_{t}} = 0, \nonumber
        \end{equation}
        or equivalently and by naming the terms
        \begin{align*}
            I_1 + I_2 + I_3 + I_4 &:= 2((\ptt \lambda, \pt \lambda ))_ {\epsilon, \Omega_{t}} - 2\| \pt \lambda \|^2_{\sigma, \Omega_{t}}  -2 ((\Delta \lambda, \pt \lambda))_{\Omega_{t}} \\
            &-2 (( \nabla \nabla \cdot \lambda, \pt \lambda ))_{\varepsilon - 1, \Omega_{t}} = 0.
        \end{align*}
        We will estimate these terms one by one. First, by using integration by parts in time we have 
        \begin{align*}
            I_1 &= 2((\ptt \lambda, \pt \lambda ))_ {\varepsilon, \Omega_{t}} 
            = \pt \| \pt \lambda \|^2_{\varepsilon, \Omega_{t}} 
            = \|\pt \lambda\|^2_{\varepsilon} |_{\tau=t}^T 
            = -\| \pt \lambda (t)\|^2_{\varepsilon},
        \end{align*} 
        where we made use of the chain rule and the finality conditions put on $\lambda$. Before we show an upper bound for $I_2$, we note that
        \begin{equation*}
            \sqrt{\sigma} \lambda (t) = \sqrt{\sigma} \lambda (T)  + \int_T^{t}  \sqrt{\sigma} \pt \lambda(\tau) \, \dtau = - \int^T_{t}  \sqrt{\sigma} \pt \lambda(\tau) \, \dtau,  
        \end{equation*}
        By squaring both sides and integrating over $\Omega$ we arrive at 
        \begin{equation*}
            \| \lambda (t) \|^2_{\sigma} \leq \|\pt \lambda \|^2_{\sigma, \Omega_{t}} 
        \end{equation*}
        and thus
        \begin{equation*}
            I_2 = -2\|\pt \lambda \|^2_{\sigma, \Omega_{t}} = -\|\pt \lambda \|^2_{\sigma, \Omega_{t}} -\|\pt \lambda \|^2_{\sigma, \Omega_{t}}\leq  -\| \lambda (t) \|^2_{\sigma} - \|\pt \lambda \|^2_{\sigma, \Omega_{t}}.
        \end{equation*}
        For estimation of $I_3$ we simply integrate by parts,
        \begin{align*}
            I_3 &= -2(( \Delta \lambda, \pt \lambda ))_{\Omega_{t}} \\
            &= -2 ((\pn \lambda, \pt \lambda))_{\Gamma_{t}} + 2((\nabla \lambda, \nabla \pt \lambda ))_{\Omega_{t}} \\
            &= 2\langle \langle E-\Etobs,\pt \lambda \rangle \rangle +\pt \| \nabla  \lambda \|^2_{\Omega_{t}} \\
            &\leq \|E-\Etobs\|^2_{\Gamma_t} + \| \pt \lambda \|^2_{\Gamma_t} + \|\nabla \lambda \|^2_{\Omega_t} |^T_{\tau=t} \\
            &= \|E-\Etobs\|^2_{\Gamma_t} + \| \pt \lambda \|^2_{\Gamma_t}-\|\nabla\lambda(t)\|^2,
        \end{align*}
        where we used that $2\langle \langle E-\Etobs,\pt \lambda \rangle \rangle \leq \|E-\Etobs\|^2_{\Gamma_t} + \| \pt \lambda \|^2_{\Gamma_t}$.

We use a similar approach to estimate $I_4$. 
        \begin{align*}
            I_4 &= -2(( \nabla\nabla \cdot \lambda , \pt \lambda ))_{\varepsilon-1,\Omega_{t}} \\
            &= -2((\nabla\cdot\lambda , \pt \lambda \cdot n))_{\varepsilon-1, \Gamma_{t}} + 2((\nabla \cdot \lambda, \nabla \cdot (\varepsilon-1)\pt\lambda))_{\Omega_{t}} \\
            &= 2(( \nabla \cdot \lambda, (\varepsilon-1) \nabla \cdot \pt \lambda + \nabla \varepsilon\cdot \pt \lambda ))_{\Omega_{t}} \\
            &= \pt \|\nabla \cdot \lambda \|^2_{\varepsilon - 1, \Omega_{t}} + 2 ((\nabla\cdot \lambda, \nabla \varepsilon \cdot \pt \lambda ))_{\Omega_{t}} \\
            &= \| \nabla \cdot \lambda (t)\|^2_{\varepsilon-1}|^T_{\tau = t} + 2 ((\nabla\cdot \lambda, \nabla \varepsilon \cdot \pt \lambda ))_{\Omega_{t}}.\\
            &= -\| \nabla \cdot \lambda (t)\|^2_{\varepsilon-1} + 2 ((\nabla\cdot \lambda, \nabla \varepsilon \cdot \pt \lambda ))_{\Omega_{t}}.
        \end{align*}
        Finally, collecting all terms in estimates for $I_1, I_2, I_3, I_4$ and adding a  new  positive term  $ \|\pt \lambda\|^2_{\Omega_{t}}$ we get:
        \begin{equation}\label{proofsum}
        \begin{aligned}
            0 &= I_1 + I_2 + I_3+ I_4 \\
            &\leq -\| \pt \lambda (t)\|^2_{\varepsilon} -\| \lambda (t) \|^2_{\sigma} - \|\pt \lambda \|^2_{\sigma, \Omega_{t}} -\|\nabla\lambda(t)\|^2 -\| \nabla \cdot \lambda (t)\|^2_{\varepsilon-1} \\
            &+ 2 ((\nabla\cdot \lambda, \nabla \varepsilon \cdot \pt \lambda ))_{\Omega_{t}} + \|E-\Etobs\|^2_{\Gamma_t} + \| \pt \lambda \|^2_{\Gamma_t} + \|\pt \lambda\|^2_{\Omega_{t}}
        \end{aligned}
        \end{equation}
        Now, we gather all terms   integrated over $\Omega$ in \eqref{proofsum} on the left hand side, together with additional term  $\| \pt \lambda \|^2_{\Gamma_t}$  which we will estimate later, and leave the rest of the terms on the right hand side in order to get the following estimate:
 \begin{equation}\label{newsum}
 \begin{split}
           & \| \pt \lambda (t)\|^2_{\varepsilon} + \| \lambda (t) \|^2_{\sigma} + \|\nabla\lambda(t)\|^2 + \| \nabla \cdot \lambda (t)\|^2_{\varepsilon-1} - \| \pt \lambda \|^2_{\Gamma_t} \\
            &\leq -\|\pt \lambda \|^2_{\sigma, \Omega_{t}} + 2 ((\nabla\cdot \lambda, \nabla \varepsilon \cdot \pt \lambda ))_{\Omega_{t}} + \| E - \Etobs\|^2_{\Gamma_{t}}  + \|\pt \lambda\|^2_{\Omega_{t}} \\
            &\leq \| E - \Etobs\|^2_{\Gamma_{t}}
	    + \| \nabla \varepsilon\|^2_\infty\| \pt \lambda\|^2_{\Omega_{t}}
	    + \|\lambda\|^2_{\sigma, \Omega_{t}} + \|\nabla\lambda\|^2_{\Omega_{t}} + \| \nabla \cdot \lambda\|^2_{\Omega_{t}},
	    \end{split}
        \end{equation}

To estimate the term  $\| \pt \lambda \|^2_{\Gamma_t}$  in the left hand side of \eqref{newsum} we  use the condition \eqref{adj2}, or  $\lambda (x,T) =0$,
   such that  we can write
  \begin{equation*}
        \lambda (x,t) =  \lambda (x,T)  + \int_T^{t}   \pt \lambda(x,\tau) \, \dtau =
	- \int^T_{t} \pt \lambda(x,\tau) \, \dtau. 
        \end{equation*}
      Now,   squaring both sides of the above expression and  integrating then over $\Gamma$ we get
        \begin{equation*}
            \| \lambda  \|_\Gamma^2 \leq \|\pt \lambda \|^2_{\Gamma_{t}},
        \end{equation*}
        which can be rewritten as
        \begin{equation}\label{estimategamma}
          - \|\pt \lambda \|^2_{\Gamma_{t}} \leq   -  \| \lambda  \|_\Gamma^2.
        \end{equation}
Inserting the estimate \eqref{estimategamma} into the left hand side of  \eqref{newsum}, moving then
 the term $ \| \lambda  \|_\Gamma^2$ to the right hand side of the obtained expression and  estimating this term using the  trace  theorem
\begin{equation*}
 \| \lambda  \|_\Gamma^2 \leq \int_{\Omega} (\lambda)^2  dx  +
 \int_{\Omega} ( \nabla \lambda)^2  dx = \| \lambda\|^2  +  \| \nabla \lambda\|^2
\end{equation*}

we finally obtain:
        \begin{equation}\label{tripplelambda}
	\begin{split}
             ||| \lambda(t)|||^2 &= \| \pt \lambda (t)\|^2_{\varepsilon} + \| \lambda (t) \|^2_{\sigma} + \|\nabla\lambda(t)\|^2 + \| \nabla \cdot \lambda (t)\|^2_{\varepsilon-1} \\
            &\leq \| \lambda  \|_\Gamma^2  -\|\pt \lambda \|^2_{\sigma, \Omega_{t}} + 2 ((\nabla\cdot \lambda, \nabla \varepsilon \cdot \pt \lambda ))_{\Omega_{t}} + \| E - \Etobs\|^2_{\Gamma_{t}} + \|\pt \lambda\|^2_{\Omega_{t}} \\
            &\leq \| E - \Etobs\|^2_{\Gamma_{t}} + C (\| \nabla \varepsilon\|^2_\infty\| \pt \lambda\|^2_{\Omega_{t}} + \|\lambda\|^2_{\sigma, \Omega_{t}} + \|\nabla\lambda\|^2_{\Omega_{t}} + \| \nabla \cdot \lambda\|^2_{\Omega_{t}}),
	    \end{split}
        \end{equation}
        or equivalently
  \begin{equation}\label{tripplelambda}
	\begin{split}
             ||| \lambda(t)|||^2 &= \| \pt \lambda (t)\|^2_{\varepsilon} + \| \lambda (t) \|^2_{\sigma-1} + \|\nabla\lambda(t)\|^2 + \| \nabla \cdot \lambda (t)\|^2_{\varepsilon-1} \\
            &\leq \| E - \Etobs\|^2_{\Gamma_{t}} + C (\| \nabla \varepsilon\|^2_\infty\| \pt \lambda\|^2_{\Omega_{t}} + \|\lambda\|^2_{\sigma, \Omega_{t}} + \|\nabla\lambda\|^2_{\Omega_{t}} + \| \nabla \cdot \lambda\|^2_{\Omega_{t}}).
	    \end{split}
        \end{equation}

Using
        \begin{equation*}
            a(t) \leq b(t) + C\int_{t}^T a(\tau)  \, \dtau 
        \end{equation*}
        with 
        \begin{align*}
            a(t) := |||\lambda(t)|||^2, \quad b(t) := \int_t^T \| E - \Etobs\|^2_{\Gamma}, \quad C:=C(\|\varepsilon \|_{C^2}).   
        \end{align*}
        An application of Grönwall's inequality  to
	 \eqref{tripplelambda}  yields the result given  in \eqref{energylambda}.
    \end{proof}

\section{Mathematical analysis of optimization problem}

\label{sec:analysis}

To minimize our functional we will use gradient methods, so we begin to investigate the Fréchet derivative of $J$ with respect to $(\varepsilon, \sigma)$. Before we do so we will have to establish mathematical analysis of our optimization problem.

Let  us denote  perturbations of coefficients  $\varepsilon, \sigma$
 as
 $\epsd := \epst 
- \varepsilon$, $\sigd := \sigt - \sigma$, respectively.

\begin{lemma}\label{lemma1}
    The mapping $(\varepsilon, \sigma) \rightarrow E(\varepsilon, \sigma)$ is Lipschitz continuous, i.e. for any $\varepsilon, \varepsilon + \delta \varepsilon$
      and  $\sigma, \sigma + \delta \sigma$  satisfying conditions \eqref{coeffs}  we have

\begin{equation}\label{lipcontE}
\| E(\varepsilon + \delta \varepsilon,  \sigma + \delta \sigma)    -
E(\varepsilon,  \sigma)   \|  \leq  C ( \| \epsd \| + \| \sigd \| ),
\end{equation}
where $C = C(\tau, \|f_0\|, \|f_1\|) = const. > 0.$
\end{lemma}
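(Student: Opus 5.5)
We argue by subtracting the two forward problems and applying the energy estimate of the first theorem in Section~\ref{sec:stability} to the difference. Write $E := E(\varepsilon,\sigma)$, $\tilde E := E(\varepsilon+\epsd,\sigma+\sigd)$ and $w := \tilde E - E$. Both fields satisfy (\ref{forward}--c) with the same source $-\pt J$ and the same data $f_0,f_1$. Subtracting the equations therefore gives a problem with the unperturbed coefficients and zero data:
\begin{equation*}
\varepsilon\ptt w + \sigma\pt w - \Delta w - \nabla\nabla\cdot(\varepsilon-1)w = -\epsd\,\ptt\tilde E - \sigd\,\pt\tilde E + \nabla\nabla\cdot(\epsd\,\tilde E),
\end{equation*}
with $w(x,0)=\pt w(x,0)=0$ and $\pn w=0$ on $\GammaT$. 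Here $\epsd$ and $\sigd$ vanish on $\OFDM$, because both coefficient pairs lie in $C_\varepsilon\times C_\sigma$. Consequently the right-hand side is supported in $\OIN$, and no new boundary terms arise.

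The next step treats the right-hand side $G$ as the source $-\pt J$ in the energy estimate \eqref{energyestimate}. Since the data are zero, this yields $|||w(t)|||^2\le C\int_0^T\|G\|^2\,\dtau$, and $\|w(t)\|^2$ is controlled by the $\sigma$-weighted term because $\sigma\ge 1$. The terms $\epsd\,\ptt\tilde E$ and $\sigd\,\pt\tilde E$ are handled by pulling out the coefficient perturbations:
\begin{equation*}
\|\epsd\,\ptt\tilde E\|\le \|\epsd\|\,\|\ptt\tilde E\|_{L_\infty(\Omega)}.
\end{equation*}
This requires a uniform bound on $\tilde E$ in a stronger norm, for example $\tilde E\in W^{2}_\infty(\OT)$ or $H^2$ combined with Sobolev embedding. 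I would assume such a bound, namely that the forward solutions for coefficients in $C_\varepsilon\times C_\sigma$ are uniformly bounded in terms of $\|f_0\|,\|f_1\|$ and $J$. This is where the stated dependence $C=C(\tau,\|f_0\|,\|f_1\|)$ of the constant comes from.

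The main obstacle is the divergence term $\nabla\nabla\cdot(\epsd\,\tilde E)$. It contains derivatives of $\epsd$, which cannot be bounded by $\|\epsd\|_{L_2}$. To avoid this, I would not substitute it as a source. Instead I would return to the proof of the energy estimate: multiply the $w$-equation by $2\pt w$ and integrate by parts in space. Because $\epsd=0$ near $\Gamma$, this gives
\begin{equation*}
-2((\nabla\cdot(\epsd\,\tilde E),\nabla\cdot\pt w)),
\end{equation*}
and a further integration by parts in time, with $w(x,0)=0$, moves the time derivative onto $\tilde E$. This produces a term $\nabla\cdot(\epsd\,\tilde E)(t)$ paired with $\nabla\cdot w(t)$, plus a space–time integral of $\nabla\cdot(\epsd\,\pt\tilde E)$ against $\nabla\cdot w$. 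Even so, $\nabla\epsd$ still appears. Both coefficient sets are $C^2$-bounded, so I would bound $\|\nabla\epsd\|$ by interpolation and compactness on the admissible set, or accept a $C^1$ norm of $\epsd$ at this point. I expect the honest outcome to be a Lipschitz bound in a somewhat stronger norm of $\epsd$ than the $L_2$ norm written in \eqref{lipcontE}.

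Finally, Young's inequality absorbs the $\nabla\cdot w$ terms into the left-hand side, and Gr\"onwall's inequality, as in the adjoint stability proof, gives
\begin{equation*}
\|w(t)\|^2\le C(\|\epsd\|+\|\sigd\|)^2.
\end{equation*}
Taking square roots and integrating over $t\in(0,T)$ gives \eqref{lipcontE}.
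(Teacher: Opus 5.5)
\paragraph{Verdict: genuine gap.} Your setup is the paper's: you subtract the two forward problems, treat the right-hand side as the source in the energy estimate \eqref{energyestimate}, and bound it by the coefficient perturbations. (Your sign $+\nabla\nabla\cdot(\epsd\,\tilde E)$ is the correct one; the paper's minus sign is a harmless slip.)

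The gap is the one you identify and then step over: your last display, with $L_2$ norms of $\epsd$, does not follow from what precedes it. After your integrations by parts the bound still contains $\|\nabla\cdot(\epsd\,\tilde E)\|$, and hence $\nabla\epsd$. Neither remedy you name makes this linear in $\|\epsd\|$:
\begin{itemize}
\item Interpolation, $\|\nabla\epsd\|\le C\|\epsd\|^{1/2}\|\epsd\|_{H^2}^{1/2}$, gives at best a H\"older-$\tfrac12$ estimate. It also needs a uniform $H^2$ bound that $C_\varepsilon$ does not provide.
\item Compactness gives continuity with no rate.
\end{itemize}
What your argument actually proves is $|||w(t)|||\le C(\|\epsd\|_{H^1}+\|\sigd\|)$, or the same with a $C^1$ norm of $\epsd$. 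Even that holds only under an extra, unproved assumption: that $\tilde E$ is bounded in $W^2_\infty(\OT)$ uniformly over the admissible set. Neither \eqref{energyestimate} nor the definition of $C_\varepsilon$ supplies this.

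\paragraph{Why the energy-norm route cannot close.} Sharper bounds inside the same argument will not fix this. Anything routed through $|||w(t)|||$ controls $\|\nabla w(t)\|$, and that quantity genuinely responds to $\nabla\epsd$. Take the scalar model $\ptt w-\Delta w=\Delta(\eta\,g(t))$ with zero data, $\pn w=0$, $\eta$ supported away from $\Gamma$, and $g(0)=g'(0)=0$. Then $v:=w+\eta g$ solves $\ptt v-\Delta v=\eta g''$ with zero data, so $\|v(t)\|+\|\nabla v(t)\|\le C\|\eta\|$. Hence $\|w(t)\|\le C\|\eta\|$, whereas $\nabla w=\nabla v-g\nabla\eta$ is not bounded by $C\|\eta\|$.

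An $L_2$ Lipschitz estimate therefore has to be obtained in a norm weaker than the energy norm. One option is to subtract an elliptic (quasi-static) corrector for the source $\nabla\nabla\cdot(\epsd\,\tilde E)$ and apply \eqref{energyestimate} only to the remainder. Another is a lower-order energy estimate.

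\paragraph{Comparison with the paper.} The paper does not supply this either. Its proof follows your route and closes it with two steps, neither valid for $L_2$ norms:
\begin{itemize}
\item it asserts $\|\nabla\nabla\cdot(\epsd\Et)\|\le C\|\epsd\|\,\|\Et\|$;
\item it uses $\|\epsd\ptt\Et\|\le\|\epsd\|\,\|\ptt\Et\|$.
\end{itemize}
It then replaces $\ptt\Et,\pt\Et$ by difference quotients in time, which is the source of the $\tau$ in its constant. That is an approximation, not a bound. Your objections to exactly these steps are correct, but your proposal, like the paper's argument, leaves \eqref{lipcontE} unproved.
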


\begin{proof}
Let us introduce a new problem, similar to (\ref{system6eq1}--c) but for  perturbated coefficients
 $ \epst =
\epsd + \varepsilon$, $\sigt =  \sigd + \sigma$:
\begin{subnumcases}{}
    \epst \ptt \Et + \sigt \pt \Et - \Delta \Et + \nabla \nabla \cdot \Et - \nabla \nabla \cdot (\epst \Et) = 0  & for $ (x, t) \in \OT $, \label{system7eq1}\\
    \Et(x, 0) = f_0, ~ \pt \Et(x, 0) = f_1 & for $x \in \Omega$, \label{initcond7} \\
    \pn\Et = 0 & for $(x, t) \in \GammaT$. \label{boundcond7}
\end{subnumcases}
If we now subtract (\ref{system6eq1}--c) from (\ref{system7eq1}--c), we get a third system 
\begin{subnumcases}{}
    \epst \ptt \Et - \varepsilon \ptt E + \sigt \pt \Et - \sigma \pt E - \Delta \Et + \Delta E \label{system8eq1}\\ 
    + \nabla \nabla \cdot \Et  - \nabla \nabla \cdot E - \nabla \nabla \cdot (\epst \Et ) + \nabla \nabla \cdot (\varepsilon E) = 0 
    & for $ (x, t) \in \OT $,  \nonumber \\
    \Et(x, 0) - E(x, 0) = 0, ~ \pt \Et (x, 0) - \pt E(x, 0) = 0 & for $x \in \Omega$, \label{initcond8} \\
    \pn(\Et - E) = 0 & for $(x, t) \in \GammaT$. \label{boundcond8}
\end{subnumcases}
By introducing $\Eb := \delta E = \Et - E$ and rearranging  terms, we can rewrite \eqref{system8eq1} as
\begin{align*}
    &\epst \ptt \Et - \varepsilon \ptt E + \sigt \pt \Et - \sigma \pt E - \Delta \Et + \Delta E + \nabla \nabla \cdot \Et  - \nabla \nabla \cdot E - \nabla \nabla \cdot (\epst \Et ) + \nabla \nabla \cdot (\varepsilon E) \\
    &= \epst \ptt \Et - \varepsilon \ptt \Et + \varepsilon \ptt \Et - \varepsilon \ptt E + \sigt \pt \Et - \sigma \pt \Et + \sigma \pt \Et - \sigma \pt E - \Delta \Et + \Delta E \\
    &+ \nabla \nabla \cdot \Et  - \nabla \nabla \cdot E - \nabla \nabla \cdot (\epst \Et ) - \nabla \nabla \cdot (\varepsilon \Et ) + \nabla \nabla \cdot (\varepsilon \Et )+ \nabla \nabla \cdot (\varepsilon E) \\
    &= \delta\varepsilon \ptt \Et + \varepsilon \ptt \Eb + \delta\sigma \pt \Et + \sigma \pt \Eb - \Delta \Eb + \nabla \nabla \cdot \Eb - \nabla \nabla \cdot(\delta \varepsilon \Et) - \nabla\nabla\cdot(\varepsilon\Eb) = 0,
\end{align*}
and thus, the whole system can be rewritten as
\begin{subnumcases}{}
    \varepsilon \ptt \Eb + \sigma  \pt \Eb - \Delta \Eb 
    + \nabla \nabla \cdot \Eb  - \nabla \nabla \cdot (\varepsilon \Eb ) \label{system9eq1} \\ 
    = -\epsd \ptt \Et - \sigd \pt \Et - \nabla \nabla \cdot (\epsd \Et)  
    & for $ (x, t) \in \OT $,  \nonumber \\
    \Eb(x, 0) = 0, ~ \partial_t \Eb(x, 0) = 0 & for $x \in \Omega$, \label{initcond9} \\
    \pn\Eb = 0 & for $(x, t) \in \GammaT$.  \label{boundcond9}
\end{subnumcases}

Applying stability estimate of the Theorem 4.1 for the problem \eqref{system9eq1}-\eqref{boundcond9}
we can  write that
\begin{equation}\label{Edineqnew}
\begin{split}
    |||  \Eb |||^2 &\leq  C  \int_t^T \left [ \| \epsd \| \| \ptt \Et \|  +
    \| \sigd \| \| \pt \Et \| +
    \| \nabla \nabla \cdot (\epsd \Et) \| \right]^2  d\tau.  \\
    \end{split}
\end{equation}

For estimation of the term $\|\nabla \nabla \cdot (\epsd \Et)\|$  in
 \eqref{Edineqnew}one can use:
\begin{equation} \label{estimnablaeps}
\begin{split}
\| \nabla \nabla \cdot (\epsd \Et)\| = \|\nabla [ \nabla \delta \varepsilon \tilde{E} + \delta \varepsilon \nabla \cdot \tilde{E}] \| \leq C ~~ \|\delta \varepsilon \| \| \tilde{E}\|.
\end{split}
\end{equation}

Next, applying first Taylor's expansions
\begin{equation}
\begin{split}
\tilde{E}(t + \tau) &= \tilde{E}(t) + \partial_t \tilde{E}(t) \tau +
\partial_{tt} \tilde{E}(t)\frac{\tau^2}{2} + O(\tau^3),\\
\tilde{E}(t + \tau) &= \tilde{E}(t) + \partial_t \tilde{E}(t)\tau +  O(\tau^2)
\end{split}
\end{equation}
 for estimation of terms   $\partial_{tt} \tilde{E},  \partial_{t} \tilde{E} $   in the right hand side of the estimate
\eqref{Edineqnew}
such that
\begin{equation}
\begin{split}
\partial_{tt} \tilde{E}(t)  & \approx
\frac{ \tilde{E}(t+\tau) - 2 \tilde{E}(t) + \tilde{E}(t - \tau)}{\tau^2},\\
\partial_{t} \tilde{E}(t)  & \approx \frac{ \tilde{E}(t+ \tau) - \tilde{E}(t-\tau) }{2\tau}
\end{split}
\end{equation}

and  using  after the Theorem 4.1 for the stability  estimate 
of solution $\tilde{E}$ of the   problem \eqref{system7eq1}-\eqref{boundcond7}  we obtain finally
\begin{equation}\label{Lest}
   ||| \Eb |||^2 \leq C(\tau, \|f_0\|, \|f_1\|) ( \| \epsd \| + \| \sigd \|)^2.
\end{equation}
From \eqref{Lest}  follows \eqref{lipcontE}  and thus,
  the mapping $(\varepsilon, \sigma) \rightarrow E(\varepsilon, \sigma)$ is Lipschitz continuous.

\end{proof} 

\begin{lemma}
    The mapping $(\varepsilon, \sigma) \rightarrow E(\varepsilon, \sigma)$ is Fréchet differentiable  in the sense that  for any $\delta \varepsilon, \delta \sigma$ satisfying conditions \eqref{coeffs}  there exists a bounded linear operator $ A(\epsd, \sigd)$  such that
  \begin{equation*}
  \begin{split}
       \lim_{\| ( \epsd, \sigd ) \| \rightarrow \, 0 } \frac{\| \Et - E - A(\epsd, \sigd) \| }{\| ( \epsd, \sigd ) \|} = 0.
	\end{split}
    \end{equation*}   

Here, $\| ( \epsd, \sigd ) \|:= \| \epsd\| + \| \sigd \|$,  $A(\epsd, \sigd) =
 A(\epsd) + A(\sigd) $, where $ A(\epsd),~ A(\sigd)$ are bounded  linear operators.
\end{lemma}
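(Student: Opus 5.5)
We identify $A$ as the solution operator of the linearized (sensitivity) problem and show that the remainder is quadratically small, using Lemma \ref{lemma1} and the energy estimate of Theorem 4.1. Start from the system \eqref{system9eq1}--\eqref{boundcond9} satisfied by $\Eb = \Et - E$. Its right-hand side contains $\Et$; replacing $\Et$ by $E$ gives the linear problem for $U := A(\epsd,\sigd)$:
\begin{equation*}
\varepsilon \ptt U + \sigma \pt U - \Delta U + \nabla\nabla\cdot U - \nabla\nabla\cdot(\varepsilon U) = -\epsd\, \ptt E - \sigd\, \pt E - \nabla\nabla\cdot(\epsd\, E),
\end{equation*}
with $U(x,0)=\pt U(x,0)=0$ in $\Omega$ and $\pn U = 0$ on $\GammaT$. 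The right-hand side is linear in $(\epsd,\sigd)$, and the operator on the left does not depend on the perturbation. Therefore the map $(\epsd,\sigd)\mapsto U$ is linear and splits as $A(\epsd)+A(\sigd)$: $A(\epsd)$ solves the problem with source $-\epsd\,\ptt E - \nabla\nabla\cdot(\epsd E)$, and $A(\sigd)$ solves it with source $-\sigd\,\pt E$.

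Boundedness follows from Theorem 4.1, applied with zero initial data and with the source playing the role of $-\pt J$. It gives $|||U|||^2 \le C\int (\|\epsd\|\,\|\ptt E\| + \|\sigd\|\,\|\pt E\| + \|\nabla\nabla\cdot(\epsd E)\|)^2$. The last term is handled as in \eqref{estimnablaeps}, using the $C^2$ regularity of the coefficient classes \eqref{coeffs} and the assumed $H^2$ regularity of $E$. This yields $\|U\| \le C(\|\epsd\|+\|\sigd\|)$, so $A$ is a bounded linear operator.

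Next, set $W := \Eb - U = \Et - E - A(\epsd,\sigd)$ and subtract the two problems. The left-hand operator, the zero initial data and the homogeneous Neumann condition are the same for both, so $W$ solves the same system with source
\begin{equation*}
-\epsd\, \ptt \Eb - \sigd\, \pt \Eb - \nabla\nabla\cdot(\epsd\, \Eb).
\end{equation*}
Every term is a product of a coefficient perturbation with $\Eb$ or one of its derivatives. Applying Theorem 4.1 again gives
\begin{equation*}
|||W|||^2 \le C\int \big(\|\epsd\|\,\|\ptt\Eb\| + \|\sigd\|\,\|\pt\Eb\| + \|\epsd\|\,\|\Eb\|\big)^2 .
\end{equation*}
Lemma \ref{lemma1}, specifically the bound \eqref{Lest}, controls $|||\Eb|||$ by $C(\|\epsd\|+\|\sigd\|)$. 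Hence $\|W\| \le C(\|\epsd\|+\|\sigd\|)^2$, and dividing by $\|(\epsd,\sigd)\|$ gives the stated limit.

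The main obstacle is the term $\|\ptt\Eb\|$, which the energy norm $|||\cdot|||$ does not control. The first approach is to argue as in Lemma \ref{lemma1}, approximating $\ptt\Eb$ by a difference quotient of $\Eb$ in time. A cleaner alternative is to differentiate \eqref{system9eq1} once in $t$ and apply Theorem 4.1 to $\pt\Eb$. This requires the source terms to be differentiable in time, that is, extra regularity $\pt^3 \Et \in L_2$, which I would simply assume, as the paper already does with $H^2$ solutions. With that, $\|\ptt \Eb\| \le C(\|\epsd\|+\|\sigd\|)$ and the quadratic bound on $W$ closes. A similar regularity assumption is needed for the second-order spatial term $\nabla\nabla\cdot(\epsd\,\Eb)$, which requires $H^2$ control of $\Eb$.
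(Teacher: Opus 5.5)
Your splitting $\Eb = U + W$ follows the paper's proof: $U = A(\epsd,\sigd)$ solves the linearized problem with source $-\epsd\,\ptt E - \sigd\,\pt E - \nabla\nabla\cdot(\epsd E)$, $W$ solves the same problem with $E$ replaced by $\Eb$ in the source, and both are closed with Theorem 4.1 and Lemma \ref{lemma1}. Your version actually gives the order the definition needs. The paper's displayed bound is only $|||w|||^2 \le C(\|\epsd\|+\|\sigd\|)^2$, i.e.\ $\|w\| = O(\|(\epsd,\sigd)\|)$, and it then divides $\|w\|^2$ rather than $\|w\|$ by $\|(\epsd,\sigd)\|$. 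Your $\|W\| \le C(\|\epsd\|+\|\sigd\|)^2$ is quadratic because every source term of $W$ is a perturbation times $\Eb$. Differentiating \eqref{system9eq1} in time to control $\ptt\Eb$ is also a sound repair of a step the paper passes over, since Lemma \ref{lemma1} controls only $|||\Eb|||$. Note, though, that $\ptt\Eb(0) \neq 0$: it is read off from the equation at $t=0$, is $O(\|(\epsd,\sigd)\|)$, and enters Theorem 4.1 as initial data.

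The genuine gap is the norm in which the perturbations are measured. In the statement $\|\epsd\|$ and $\|\sigd\|$ are $L_2(\Omega)$ norms. With these, $\|\epsd\,\ptt E\| \le \|\epsd\|\,\|\ptt E\|$ fails, since a product of two $L_2$ functions is only in $L_1$. The bound \eqref{estimnablaeps} also fails: $\nabla\nabla\cdot(\epsd E)$ contains $\nabla\nabla\epsd$ and $\nabla\epsd$. For example, $\epsd = \eta\,\phi(x)(1+\sin(kx_1))$, with $\phi \ge 0$ a smooth cutoff in $\OIN$ and $\eta$ small, has $\|\epsd\| \sim \eta$ but $\|\nabla\nabla\epsd\| \sim \eta k^2$. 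The $C^2$ regularity in $C_\varepsilon$ only makes these quantities finite; it gives no constant independent of $\epsd$. Your remark that this term needs $H^2$ control of $\Eb$ misses the point, because no regularity assumption on $E$ or $\Eb$ can compensate. So, as written, neither the boundedness of $A$ nor the quadratic bound on $W$ follows; the same defect sits in the paper's \eqref{estimnablaeps} and in Lemma \ref{lemma1}, on which you rely.

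The repair is to prove differentiability with respect to a norm that controls these terms, e.g.\ $\|\epsd\|_{C^2(\overline{\Omega})} + \|\sigd\|_{C(\overline{\Omega})}$. Then $\|\epsd\,\ptt\Eb\| \le \|\epsd\|_{L_\infty}\|\ptt\Eb\|$ and $\|\nabla\nabla\cdot(\epsd\Eb)\| \le C\|\epsd\|_{C^2}\|\Eb\|_{H^2}$, and your argument closes under the extra regularity you assume. If you only need $\|W\|$ in $L_2$, you can even drop that extra regularity. Set $V := \int_0^t W\,d\tau$; it has zero initial data and satisfies the same equation with source $-\epsd\,\pt\Eb - \sigd\,\Eb - \nabla\nabla\cdot(\epsd\int_0^t \Eb\,d\tau)$. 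In the energy identity, integrate the last term by parts in space (there is no boundary term, since $\epsd = 0$ in $\OFDM$) and then in time. Only $\|\epsd\|_{W^{1,\infty}}$, $\|\sigd\|_{L_\infty}$ and the energy bound for $\Eb$ then enter, the latter from Lemma \ref{lemma1} with its norms corrected the same way. This gives $\|W(t)\| = \|\pt V(t)\| \le C(\|\epsd\|_{W^{1,\infty}}+\|\sigd\|_{L_\infty})^2$ without any control of $\ptt\Eb$ or of second spatial derivatives of $\Eb$.
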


\begin{proof}

 Let us continue with the notation introduced in Lemma \ref{lemma1}.
Using $\Eb = \Et - E$ we will let $w := \Et - E - A(\epsd, \sigd) =  \Eb - A(\epsd, \sigd)$.  We look  for an operator $A(\epsd, \sigd)$ such that 
    \begin{equation*}
        \lim_{\| ( \epsd, \sigd ) \| \rightarrow \, 0 } \frac{\| w \| }{\| ( \epsd, \sigd ) \|} = 0,
    \end{equation*}
    where $\Et := E(\varepsilon + \epsd, \sigma + \sigd)$.

Now using $\Et = \Eb + E$ we can rewrite (\ref{system9eq1}--c) as 
\begin{subnumcases}{}
    \varepsilon \ptt \Eb + \sigma  \pt \Eb - \Delta \Eb 
    + \nabla \nabla \cdot \Eb  - \nabla \nabla \cdot (\varepsilon \Eb )
    \label{system10eq1} \\ 
    = -\epsd \ptt (\Eb + E) - \sigd \pt (\Eb + E) - \nabla \nabla \cdot (\epsd (\Eb + E))  
    & for $ (x, t) \in \OT $,  \nonumber \\
    \Eb(x, 0) = 0, ~ \partial_t \Eb(x, 0) = 0 & for $x \in \Omega$, \label{initcond10} \\
    \pn\Eb = 0  & for $(x, t) \in \GammaT$.  \label{boundcond10}
\end{subnumcases}

Let  $ u := A(\epsd, \sigd)$  and  $\Eb = w + u$. This notation allows us rewrite  the system above as

\begin{subnumcases}{}
    \varepsilon \ptt (w + u) + \sigma  \pt (w + u) - \Delta (w + u) 
    \nonumber \\ 
    + \nabla \nabla \cdot (w + u) - \nabla \nabla \cdot (\varepsilon (w + u) ) \label{systemwueq1}\\
    = -\epsd \ptt (\Eb + E) -\sigd \pt (\Eb + E) - \nabla \nabla \cdot (\epsd (\Eb + E))  
    & for $ (x, t) \in \OT $,  \nonumber \\
    \Eb(x, 0) = 0, ~ \partial_t \Eb(x, 0) = 0 & for $x \in \Omega$, \label{initcondwu} \\
    \pn\Eb = 0 & for $(x, t) \in \GammaT$,  \label{boundcondwu}
\end{subnumcases}
where $w$ solves 
\begin{subnumcases}{\label{problemw}}
    \varepsilon \ptt w + \sigma  \pt w - \Delta w  + \nabla \nabla \cdot w - \nabla \nabla \cdot (\varepsilon w ) \label{systemweq1}\\
    = -\epsd \ptt \Eb - \sigd \pt \Eb - \nabla \nabla \cdot (\epsd \Eb)  
    & for $ (x, t) \in \OT $,  \nonumber \\
    w(x, 0) = 0, ~\partial_t w(x, 0) = 0 & for $x \in \Omega$, \label{initcondw} \\
    \pn w = 0 & for $(x, t) \in \GammaT$,  \label{boundcondw}
\end{subnumcases}
and $u$ solves 
\begin{subnumcases}{}
    \varepsilon \ptt u + \sigma  \pt u - \Delta u  + \nabla \nabla \cdot u - \nabla \nabla \cdot (\varepsilon u ) \label{systemueq1}\\
    = -\epsd \ptt E - \sigd \pt E - \nabla \nabla \cdot (\epsd E)  
    & for $ (x, t) \in \OT $,  \nonumber \\
    u(x, 0) = 0, ~ \partial_t u(x, 0) = 0 & for $x \in \Omega$, \label{initcondu} \\
    \pn u = 0 & for $(x, t) \in \GammaT$.  \label{boundcondu}
\end{subnumcases}
Applying first the energy estimate derived in the Theorem 4.1 for the problem  \eqref{problemw}
for function $w$  which will depend on $\Eb$, and then using the Lipschitz continuity estimate \eqref{lipcontE} for $\Eb$,  will give us 
\begin{equation}
   ||| w |||^2 \leq C \int_t^T \left [ \| \epsd \ptt \Eb \| + \| \sigd \pt \Eb \| + \| \nabla \nabla \cdot (\epsd \Eb) \| \right]^2 d\tau \leq C ( \| \epsd \| + \| \sigd \| )^2.
\end{equation}
Thus,
\begin{equation*}
    \lim_{\| ( \epsd, \sigd ) \| \rightarrow \, 0 } \frac{ || w ||^2 }{\| ( \epsd, \sigd ) \|} \leq \lim_{\| ( \epsd, \sigd ) \| \rightarrow \, 0 } \frac{C ( \| \epsd \| + \| \sigd \| )^2}{\| ( \epsd, \sigd ) \|} = 0,
\end{equation*}
which concludes the proof.
\end{proof}


\begin{theorem}
    The   regularized functional 
    \begin{equation}
        F (\varepsilon, \sigma) := \frac{1}{2} \| E - \tilde{E}_{\rm obs} \|^2_{\GammaT} +
	\frac{\gamma_{\varepsilon}}{2} \| \varepsilon - \varepsilon^0\|^2 +
        \frac{\gamma_{\sigma}}{2} \| \sigma - \sigma^0\|^2
    \end{equation}
    is Fréchet differentiable and its gradient is

  \begin{equation*}
    \begin{split}
   F ' (\varepsilon, \sigma)(x) &:=  F' (\varepsilon, \sigma)(\delta \varepsilon, \delta \sigma)(x) =
 [ \int_0^T \lambda( \epsd \ptt E + \sigd \pt E + \nabla \nabla \cdot (\epsd E))~dt ] (x) \\
 &+ 
[\gamma_{\varepsilon}( \varepsilon - \varepsilon^0) \epsd  +
	    \gamma_{\sigma}( \sigma - \sigma^0 )\sigd]  (x)
 \end{split}
   \end{equation*}      
or in the weak  form, after integration by parts and using initial conditions,  the Fr\'echet derivative is written as:
 \begin{equation}\label{weakder}
 \begin{split}
   F ' (\varepsilon, \sigma)(x) &=
 - [ f_1 \cdot \lambda |_{t=0} \delta \varepsilon -
 \int_0^T \pt E \cdot \pt \lambda \delta \varepsilon ~dt
 + \int_0^T \nabla \cdot \lambda  \nabla \cdot (\delta \varepsilon E )~dt\\
 &- f_0 \cdot \lambda |_{t=0} \delta \sigma  -  \int_0^T  E \cdot \pt \lambda \delta \sigma ~dt  ] (x)\\
  &+ 
[\gamma_{\varepsilon}( \varepsilon - \varepsilon^0) \epsd  +
	    \gamma_{\sigma}( \sigma - \sigma^0 )\sigd]  (x)
\end{split}
\end{equation}

    \begin{proof}

Recalling that  $\Et = \Eb + E$  we can compute:
        \begin{equation}\label{deltaJ}
	\begin{split}
          \delta F :=  F (\varepsilon + \epsd, \sigma + \sigd) - F (\varepsilon, \sigma) &=
	  \frac{1}{2} \| \Et - \Etobs \|^2_{\GammaT} - \frac{1}{2} \| E - \Etobs \|^2_{\GammaT} \\
&+	\frac{\gamma_{\varepsilon}}{2} \| \varepsilon + \epsd - \varepsilon^0\|^2 -
	\frac{\gamma_{\varepsilon}}{2} \| \varepsilon  - \varepsilon^0\|^2 \\
 &+      \frac{\gamma_{\sigma}}{2} \| \sigma + \sigd - \sigma^0\|^2 -
	\frac{\gamma_{\sigma}}{2} \| \sigma  - \sigma^0\|^2\\
            &= \frac{1}{2} \| \Eb + (E - \Etobs) \|^2_{\GammaT} - \frac{1}{2} \| E - \Etobs \|^2_{\GammaT} \\
 &+	\frac{\gamma_{\varepsilon}}{2} \| (\varepsilon - \varepsilon^0)   + \epsd \|^2 -
	\frac{\gamma_{\varepsilon}}{2} \| \varepsilon  - \varepsilon^0\|^2 \\
 &+      \frac{\gamma_{\sigma}}{2} \| (\sigma  - \sigma^0)  + \sigd  \|^2 -
	\frac{\gamma_{\sigma}}{2} \| \sigma  - \sigma^0\|^2\\
            &= \frac{1}{2} \| \Eb \|^2_{\GammaT} + \langle \langle \Eb , \, E - \Etobs\rangle \rangle_{\GammaT}  \\
	    &+ \gamma_{\varepsilon}( \varepsilon - \varepsilon^0, \epsd )  +
	    \gamma_{\sigma}( \sigma - \sigma^0, \sigd ) \\
	   &+ \frac{\gamma_{\varepsilon}}{2} \| \epsd \|^2 +   \frac{\gamma_{\sigma}}{2} \| \sigd \|^2 . \\
\end{split}
\end{equation}


Let us write the variational formulation of the adjoint problem
 \eqref{adj1} - \eqref{adj3} for all $\Eb \in H_\lambda^1$
 \begin{equation*}
((\Eb, \, \varepsilon \ptt \lambda - \sigma \partial_t \lambda - \Delta \lambda + (1 - \varepsilon ) \nabla \nabla \cdot \lambda )) = 0,
\end{equation*}
and then perform integration by parts to get
 \begin{equation}\label{varformadjoint}
 \begin{split}
& \langle\langle E - \Etobs,  \Eb  \rangle \rangle_{\GammaT} -
	(( \pt \Eb  , \pt \lambda))_\varepsilon \\
       & - (( \Eb  , \pt \lambda ))_\sigma 
    + ((\nabla \Eb  , \nabla \lambda )) 
    + (( \nabla \cdot (\varepsilon - 1) \Eb , \nabla \cdot \lambda )) = 0.\\
\end{split}
\end{equation}
Then integrating by parts again \eqref{varformadjoint}  and noting that $\Eb$ satisfy the problem
  \eqref{system10eq1}--\eqref{boundcond10}  we get
\begin{equation*}
 \langle\langle E - \Etobs,  \Eb  \rangle \rangle_{\GammaT} +
  ((\lambda , \, \varepsilon \ptt \Eb + \sigma  \pt \Eb - \Delta \Eb + \nabla \nabla \cdot \Eb  - \nabla \nabla \cdot (\varepsilon \Eb )))  = 0.
\end{equation*}
We rewrite the last expression as
 \begin{equation}\label{varformadjoint2}
 \begin{split}
   ((\lambda , \, \varepsilon \ptt \Eb + \sigma  \pt \Eb - \Delta \Eb + \nabla \nabla \cdot \Eb  - \nabla \nabla \cdot (\varepsilon \Eb )))  = - \langle\langle E - \Etobs,  \Eb  \rangle \rangle_{\GammaT} .
\end{split}
\end{equation}

Now
  replacing the term  $\langle\langle E - \Etobs,  \Eb  \rangle \rangle_{\GammaT} $  in the  expression
  \eqref{deltaJ}
by the left hand side of the equation  \eqref{varformadjoint2} we get:
    \begin{align*}
         \delta F 
            &= \frac{1}{2} \| \Eb \|^2_{\GammaT} 
            - ((\lambda , \, \varepsilon \ptt \Eb + \sigma  \pt \Eb - \Delta \Eb + \nabla \nabla \cdot \Eb  - \nabla \nabla \cdot (\varepsilon \Eb ))) \\
  &+ \gamma_{\varepsilon}( \varepsilon - \varepsilon^0, \epsd )  +
	    \gamma_{\sigma}( \sigma - \sigma^0, \sigd ) \\
	   &+ \frac{\gamma_{\varepsilon}}{2} \| \epsd \|^2 +   \frac{\gamma_{\sigma}}{2} \| \sigd \|^2  \\
            &= \frac{1}{2} \| \Eb \|^2_{\GammaT} - ((\lambda , \, -\epsd \ptt \Et - \sigd \pt \Et - \nabla \nabla \cdot (\epsd \Et)))
	    \\
	      &+ \gamma_{\varepsilon}( \varepsilon - \varepsilon^0, \epsd )  +
	    \gamma_{\sigma}( \sigma - \sigma^0, \sigd ) \\
	   &+ \frac{\gamma_{\varepsilon}}{2} \| \epsd \|^2 +   \frac{\gamma_{\sigma}}{2} \| \sigd \|^2 . \\
        \end{align*}
Here, we have used first integration by parts and then definition of the right hand side in the
problem \eqref{system9eq1} - \eqref{boundcond9}.

Now we use again 	$   \Et = \Eb + E = \delta E +  E$ to  obtain

     \begin{equation*}
     \begin{split}
         \delta F &= \frac{1}{2} \| \delta E \|^2_{\GammaT}
	 - ((\lambda , \, -\epsd \ptt \delta E - \sigd \pt \delta E -
	  \nabla \nabla \cdot (\epsd \delta E) ))\\
	  &- ((\lambda , \, -\epsd \ptt E - \sigd \pt E - \nabla \nabla \cdot (\epsd E))) \\
  &+ \gamma_{\varepsilon}( \varepsilon - \varepsilon^0, \epsd )  +
	    \gamma_{\sigma}( \sigma - \sigma^0, \sigd ) \\
	   &+ \frac{\gamma_{\varepsilon}}{2} \| \epsd \|^2 +   \frac{\gamma_{\sigma}}{2} \| \sigd \|^2 . \\
	 \end{split}
        \end{equation*}

We move all second  order terms with $\delta$'s in the above expression to the right hand side by leaving all first order terms on the left hand side to get
     \begin{equation*}
     \begin{split}
         \delta F &+ ((\lambda , \, -\epsd \ptt E - \sigd \pt E - \nabla \nabla \cdot (\epsd E))) \\
	   &- \gamma_{\varepsilon}( \varepsilon - \varepsilon^0, \epsd )  -
	    \gamma_{\sigma}( \sigma - \sigma^0, \sigd ) \\
 &=  \frac{1}{2} \| \delta E \|^2_{\GammaT}
	 - ((\lambda , \, -\epsd \ptt \delta E - \sigd \pt \delta E -
	  \nabla \nabla \cdot (\epsd \delta E))) \\
	   &+ \frac{\gamma_{\varepsilon}}{2} \| \epsd \|^2 +   \frac{\gamma_{\sigma}}{2} \| \sigd \|^2 . \\
	 \end{split}
        \end{equation*}

The expression above can be estimated now as
  \begin{align*}
&|  \delta F +
 ((\lambda , \, -\epsd \ptt E - \sigd \pt E - \nabla \nabla \cdot (\epsd E)))
 - \gamma_{\varepsilon}( \varepsilon - \varepsilon^0, \epsd )  - \gamma_{\sigma}( \sigma - \sigma^0, \sigd )| \\
 &\leq  \mathcal{O} (\| \epsd \|^2 + \| \sigd \|^2)
        \end{align*}  
 which can be rewritten in the following form:
    \begin{equation}\label{Freder2}
    \begin{split}
|   F (\varepsilon + \epsd, \sigma + \sigd) & - F(\varepsilon, \sigma) \\
&-
 [((\lambda , \, \epsd \ptt E + \sigd \pt E + \nabla \nabla \cdot (\epsd E))\\
&+ \gamma_{\varepsilon}( \varepsilon - \varepsilon^0, \epsd )  +
	    \gamma_{\sigma}( \sigma - \sigma^0, \sigd ))]
 | \leq  \mathcal{O} (\| \epsd \|^2 + \| \sigd \|^2).
 \end{split}
        \end{equation}

Comparing now the definition of the
 Fr\'echet derivative written in the form
  \begin{equation*}
  F (\varepsilon + \epsd, \sigma + \sigd) - F(\varepsilon, \sigma) -  F' (\varepsilon, \sigma)(\delta \varepsilon, \delta \sigma) =  \mathcal{O} (\| \epsd \|^2 + \| \sigd \|^2)
\end{equation*}
with the inequality \eqref{Freder2} we observe that the functional $F$ is Fr\'echet differentiable and the Fr\'echet derivative is given by
    \begin{equation*}
    \begin{split}
   F ' (\varepsilon, \sigma)(x) &:=  F ' (\varepsilon, \sigma)(\delta \varepsilon, \delta \sigma)(x) =
 [ \int_0^T \lambda( \epsd \ptt E + \sigd \pt E + \nabla \nabla \cdot (\epsd E))~dt ] (x) \\
 &+ 
[\gamma_{\varepsilon}( \varepsilon - \varepsilon^0) \epsd  +
	    \gamma_{\sigma}( \sigma - \sigma^0 )\sigd]  (x)
 \end{split}
   \end{equation*}      
or in the weak  form, after integration by parts and using initial conditions for the model problem,  the Fr\'echet derivative is written as:
 \begin{equation}\label{derJweak}
 \begin{split}
   F ' (\varepsilon, \sigma)(x) &=
 - [ f_1 \cdot \lambda |_{t=0} \delta \varepsilon -
 \int_0^T \pt E \cdot \pt \lambda \delta \varepsilon ~dt
 + \int_0^T \nabla \cdot \lambda  \nabla \cdot (\delta \varepsilon E )~dt\\
 &- f_0 \cdot \lambda |_{t=0} \delta \sigma  -  \int_0^T  E \cdot \pt \lambda \delta \sigma ~dt  ] (x)\\
  &+ 
[\gamma_{\varepsilon}( \varepsilon - \varepsilon^0) \epsd  +
	    \gamma_{\sigma}( \sigma - \sigma^0 )\sigd]  (x)
\end{split}
\end{equation}

\end{proof}
\end{theorem}


\begin{lemma}
 Assume that the solution $E(\varepsilon, \sigma)$
 for the forward problem   \eqref{system6eq1} -- \eqref{boundcond6} and
 the solution $\lambda(\varepsilon, \sigma) $ for the adjoint problem  \eqref{adj1} -- \eqref{adj3} are sufficiently stable, i.e. satisfy stability estimates derived in the Theorem 4.1 for the forward problem and in the Theorem 4.2  for the adjoint problem, respectively.  Then
 the Frech\'{e}t derivative of the Tikhonov functional
  $F(\varepsilon, \sigma)$
 can be computed   also via  Frech\'{e}t derivative of the Lagrangian  $L$  as:
 \begin{equation}\label{derfunc}
 \begin{split}
 F'(\varepsilon, \sigma)
 := F'(E(\varepsilon, \sigma), \varepsilon, \sigma) &=
  \frac{\partial L}{\partial
 \varepsilon}(E(\varepsilon, \sigma),\lambda, \varepsilon, \sigma ) +
 \frac{\partial L}{\partial \sigma}(E(\varepsilon, \sigma), \lambda,
 \varepsilon, \sigma).
 \end{split}
 \end{equation}
where $L$ is the Lagrangian.
\end{lemma}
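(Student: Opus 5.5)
The plan is the standard Lagrangian argument, reusing the remark after the weak form \eqref{Lweak}. For $E(\varepsilon,\sigma)$ a weak solution of the forward problem (\ref{forward}--c), the remark gives
\[
F(\varepsilon,\sigma)=L(E(\varepsilon,\sigma),\lambda,\varepsilon,\sigma)
\]
for every $\lambda\in H^1_\lambda$. So I will fix $\lambda=\lambda(\varepsilon,\sigma)$, the solution of the adjoint problem (\ref{adj}--c). Then I differentiate this identity with respect to $(\varepsilon,\sigma)$ in the direction $(\epsd,\sigd)$. The weak form \eqref{Lweak} is linear in $\lambda$, so freezing $\lambda$ is legitimate, and the identity holds for all admissible perturbations.

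By the chain rule,
\[
F'(\varepsilon,\sigma)(\epsd,\sigd)=\frac{\partial L}{\partial E}(u;A(\epsd,\sigd))+\frac{\partial L}{\partial \varepsilon}(u;\epsd)+\frac{\partial L}{\partial \sigma}(u;\sigd).
\]
Here $A$ is the Fréchet derivative of $(\varepsilon,\sigma)\mapsto E$ from the preceding lemma, and the partial derivatives of $L$ are given by \eqref{optcond1}, \eqref{optcond3} and \eqref{optcond4}.

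\textbf{Step 1: justify the chain rule.} $L$ is a sum of terms that are at most bilinear in $(E,\varepsilon)$ or $(E,\sigma)$ against the fixed $\lambda$, plus the quadratic misfit and regularization terms. Hence the remainder in
\[
L(\Et,\lambda,\epst,\sigt)-L(E,\lambda,\varepsilon,\sigma)
\]
consists of terms such as $((\epsd\,\pt\Eb,\pt\lambda))$, $((\nabla\cdot(\epsd\Eb),\nabla\cdot\lambda))$ and $\tfrac12\|\Eb\|^2_{\GammaT}$. Each of these is bounded using the energy estimate of Theorem 4.2 for $\lambda$ and the Lipschitz estimate \eqref{lipcontE}. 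This gives a bound of $\mathcal{O}(\|\epsd\|^2+\|\sigd\|^2)$. The linear term in $\Eb$ is split as $\Eb=A(\epsd,\sigd)+w$, and $w$ is of higher order by the Fréchet differentiability lemma.

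\textbf{Step 2: kill the $E$-derivative.} The adjoint problem is exactly the weak statement \eqref{optcond1}, namely $\frac{\partial L}{\partial E}(u;\bar E)=0$ for all admissible $\bar E$. The variation $\Eb$, and likewise $A(\epsd,\sigd)$, has zero initial data and zero Neumann data, as in \eqref{initcond9}--\eqref{boundcond9}. So it is an admissible test function, which one can check by repeating the integration by parts that produced \eqref{varformadjoint}. Therefore $\frac{\partial L}{\partial E}(u;A(\epsd,\sigd))=0$, and what remains is $\frac{\partial L}{\partial\varepsilon}+\frac{\partial L}{\partial\sigma}$, i.e. \eqref{derfunc}. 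As a consistency check, these expressions should agree with the weak-form gradient \eqref{weakder} from the preceding theorem.

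\textbf{Main obstacle.} I expect the hard part to be Step 2's function-space bookkeeping. The variation $\Eb$ lies in the space with zero initial data, not in $H^1_E$, and not in the space with zero final data. The boundary and time terms produced by integrating \eqref{optcond1} by parts therefore have to cancel exactly against the Neumann datum \eqref{adj3} and the final conditions \eqref{adj2}. I would handle this by working with $H^2$ solutions, as assumed in Theorems 4.1--4.2, and tracking each boundary term explicitly. Controlling the second-order remainder terms also needs $\pt\Eb$ and $\nabla\cdot\Eb$ in $L_2$, not only $\Eb$ itself. For this I would rely on the triple-norm version \eqref{Lest} of the Lipschitz estimate.
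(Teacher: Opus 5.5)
Your argument is correct, up to a norm caveat that the paper shares, and it follows a genuinely different route. The paper proves the lemma by term matching. It takes the explicit weak-form gradient \eqref{derJweak} from Theorem 5.3, which was obtained from the adjoint identity \eqref{varformadjoint2} and the strong-form right-hand side of \eqref{system9eq1}. It then compares that formula with \eqref{optcond3}--\eqref{optcond4} at $\bar\varepsilon=\epsd$, $\bar\sigma=\sigd$.

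You instead differentiate $F(\varepsilon,\sigma)=L(E(\varepsilon,\sigma),\lambda,\varepsilon,\sigma)$ with $\lambda$ frozen, and let $\pdv{L}{E}=0$ remove the state sensitivity. Your route is structural. It never needs the right-hand side of the $\Eb$-equation or an integration by parts back to weak form, and it gives \eqref{derJweak} as a corollary rather than relying on it. The paper's route is a one-line check once Theorem 5.3 is available, but it is only as reliable as that formula.

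As printed, \eqref{derJweak} differs from \eqref{optcond3}+\eqref{optcond4} in the signs of four terms: $\pt E\cdot\pt\lambda$, $f_0\cdot\lambda|_{t=0}$, $E\cdot\pt\lambda$ and $\nabla\cdot\lambda\,\nabla\cdot(\epsd E)$. The last one comes from $\nabla\nabla\cdot(\epsd\Et)$ on the right of \eqref{system9eq1}, which should carry a plus sign. The other three are sign slips inside the bracket of \eqref{derJweak}. So your consistency check will report a mismatch, and that mismatch is not your error.

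Two refinements.

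First, drop the split $\Eb=A(\epsd,\sigd)+w$. The Fr\'echet-differentiability lemma is stated only in $L_2$, so it does not control $\pt w$, $\nabla w$ or the trace of $w$, all of which enter $\pdv{L}{E}(u;w)$. Its proof also only yields $|||w|||=\mathcal{O}(\|\epsd\|+\|\sigd\|)$. The split is unnecessary anyway. With the coefficients frozen at $(\varepsilon,\sigma)$, the part of $L(\Et,\lambda,\epst,\sigt)-L(E,\lambda,\varepsilon,\sigma)$ that is linear in $\Eb$ is exactly $\pdv{L}{E}(u;\Eb)$. Integrating by parts against the adjoint \eqref{adj1}--\eqref{adj3} shows this vanishes:
\begin{itemize}
\item the time-boundary terms die because $\Eb(\cdot,0)=0$ and $\pt\lambda(\cdot,T)=0$;
\item the boundary term from $\Delta\lambda$ cancels the misfit term via \eqref{adj3};
\item $\varepsilon=1$ on $\Gamma$ kills the boundary term of the $\nabla\nabla\cdot$ part.
\end{itemize}
The Neumann data of $\Eb$ play no role. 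So the increment of $F$ equals $\pdv{L}{\varepsilon}(u;\epsd)+\pdv{L}{\sigma}(u;\sigd)$ plus exactly your quadratic remainder, and only a Lipschitz bound on $\Eb$ is needed.

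Second, that remainder is not $\mathcal{O}(\|\epsd\|^2+\|\sigd\|^2)$ in $L_2$ norms. The term $((\epsd\,\pt\Eb,\pt\lambda))$ requires $\|\epsd\|_{L_\infty}$, and $((\nabla\cdot(\epsd\Eb),\nabla\cdot\lambda))$ requires $\|\nabla\epsd\|_{L_\infty}$. The estimate holds if perturbations are measured in a $C^1$-type norm, which is natural for $C_\varepsilon$ and $C_\sigma$, together with \eqref{Lest} in that norm. The paper's \eqref{estimnablaeps} has the same looseness, so this is not a defect relative to its proof.
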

\begin{proof}
Comparing 
 the Fr\'echet derivative $F'(\varepsilon, \sigma)$ in  the weak form given by
 \eqref{derJweak}
with the  Fr\'echet derivatives
 $ \frac{\partial L}{\partial
 \varepsilon}(E(\varepsilon, \sigma),\lambda, \varepsilon, \sigma ),
 \frac{\partial L}{\partial \sigma}(E(\varepsilon, \sigma), \lambda,
 \varepsilon, \sigma)$ given by optimality conditions of the Lagrangian in
  \eqref{optcond3},  \eqref{optcond4}, respectively,
 and then taking them with $\bar{\varepsilon} =\delta \varepsilon,\,  \bar{\sigma} = \delta \sigma$,
 we obtain equality \eqref{derfunc}.
\end{proof}


\section{Existence of minimizer}

\label{sec:existence}

\begin{lemma}

 The following  expression  holds for the non-regularized Tikhonov functional $ F(\varepsilon,\sigma)$:
 \begin{equation}\label{minim1}
 \begin{split}
& F(\varepsilon,\sigma) = F(\varepsilon_n,\sigma_n) -
\frac{1}{2} \int_\GammaT   (\delta E_n)^2   ~dsdt 
+ \int_\GammaT [ E(\varepsilon, \sigma) - \Etobs ] \delta E_n ~dsdt,
 \end{split}
 \end{equation}
where $\delta E_n : = E(\varepsilon,\sigma) - E(\varepsilon_n,\sigma_n)  $.
\end{lemma}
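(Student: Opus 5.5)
The identity is purely algebraic: it is the polarization identity for the $L_2(\GammaT)$ norm applied to the data-misfit term. Throughout, $F$ denotes the non-regularized functional $F(\varepsilon,\sigma)=\frac12\|E(\varepsilon,\sigma)-\Etobs\|^2_{\GammaT}$, i.e. \eqref{tikh} with $\gamma_\varepsilon=\gamma_\sigma=0$.

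I will only use that $E(\varepsilon,\sigma)$ and $E(\varepsilon_n,\sigma_n)$ are well-defined solutions of the forward problem (\ref{system6eq1}--c) whose traces on $\GammaT$ lie in $[L_2(\GammaT)]^3$. This holds under the $H^2(\OT)$ assumption of Theorem 4.1 together with the trace theorem. No stability or differentiability result (Lemma \ref{lemma1}, Theorem 4.1) is needed beyond this, since the statement is an exact identity rather than an estimate.

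First, I introduce the abbreviations $a:=E(\varepsilon,\sigma)-\Etobs$ and $b:=E(\varepsilon_n,\sigma_n)-\Etobs$ on $\GammaT$. By definition of $\delta E_n$ we have $b=a-\delta E_n$.

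Second, I expand the squared norm:
\begin{equation*}
F(\varepsilon_n,\sigma_n)=\tfrac12\|a-\delta E_n\|^2_{\GammaT}=\tfrac12\|a\|^2_{\GammaT}-\langle\langle a,\delta E_n\rangle\rangle_{\GammaT}+\tfrac12\|\delta E_n\|^2_{\GammaT}.
\end{equation*}
Here I use bilinearity and symmetry of the real inner product $\langle\langle\cdot,\cdot\rangle\rangle_{\GammaT}$, taken componentwise for vector fields.

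Third, since $\frac12\|a\|^2_{\GammaT}=F(\varepsilon,\sigma)$, I solve the previous display for $F(\varepsilon,\sigma)$:
\begin{equation*}
F(\varepsilon,\sigma)=F(\varepsilon_n,\sigma_n)-\tfrac12\|\delta E_n\|^2_{\GammaT}+\langle\langle E(\varepsilon,\sigma)-\Etobs,\delta E_n\rangle\rangle_{\GammaT}.
\end{equation*}
Writing the norm and inner product as integrals over $\GammaT$, with $(\delta E_n)^2$ read as $|\delta E_n|^2$, gives exactly \eqref{minim1}.

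There is no real obstacle here. The only point requiring care is the sign bookkeeping. The cross term must be paired with $E(\varepsilon,\sigma)-\Etobs$ and not with $E(\varepsilon_n,\sigma_n)-\Etobs$, and this choice is what makes the quadratic term enter with a minus sign. I would verify this by re-expanding the right-hand side of \eqref{minim1} back to $\frac12\|a\|^2_{\GammaT}$ as a check.
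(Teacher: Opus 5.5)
Your proof is correct and is essentially the paper's argument: both verify the quadratic identity for the misfit term by direct algebra, with no analytic input beyond the traces lying in $L_2(\GammaT)$. The only difference is cosmetic. You expand $\frac12\|a-\delta E_n\|^2_{\GammaT}$ directly, whereas the paper factors the difference of squares as $(a-b)^2-(c-b)^2=(a-c)(a+c)-2b(a-c)$ and then adds and subtracts $\int_{\GammaT} E\,\delta E_n\,ds\,dt$ to reach the same expression.
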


\begin{proof}

We have:

 \begin{equation}\label{minim2}
 \begin{split}
&  F(\varepsilon,\sigma) - F(\varepsilon_n,\sigma_n)   =
\frac{1}{2}\int_\GammaT  [E(\varepsilon,\sigma) - \Etobs   ]^2   ~ds dt
-
\frac{1}{2} \int_\GammaT  [E(\varepsilon_n,\sigma_n) - \Etobs ]^2   ~ds ~ dt
\\
&=\frac{1}{2}  \int_\GammaT  ([E(\varepsilon,\sigma) - \Etobs   ]^2 - [E(\varepsilon_n,\sigma_n) - \Etobs ]^2 )~ds dt.
\end{split}
\end{equation}

Let $a =E(\varepsilon,\sigma), b =  \Etobs, c = E(\varepsilon_n,\sigma_n)$.
Then we   simplify

\begin{equation}\label{abc}
[E(\varepsilon,\sigma) - \Etobs   ]^2 - [E(\varepsilon_n,\sigma_n) - \Etobs ]^2 =  (a-b)^2 - (c - b)^2
\end{equation}

as follows:
\begin{equation}\label{minim3}
 \begin{split}
 (a-b)^2 - (c - b)^2  &=   (a-b -(c-b))(a-b + (c-b)) 
 =(a-c)(a-2b+c)  \\
 &=  (a-c)((a+c) - 2b) = (a-c)(a+c) - 2b(a-c).
 \end{split}
 \end{equation}

Substituting \eqref{minim3} into \eqref{abc} and then to \eqref{minim2}  we get:

\begin{equation}\label{minim4}
 \begin{split}
F(\varepsilon,\sigma) - F(\varepsilon_n,\sigma_n)   &=  \frac{1}{2}  \int_\GammaT  (E(\varepsilon,\sigma) - E(\varepsilon_n,\sigma_n))(E(\varepsilon,\sigma) + E(\varepsilon_n,\sigma_n)  )~ds dt\\
&-  \int_\GammaT \Etobs (E(\varepsilon,\sigma) - E(\varepsilon_n,\sigma_n))~ds dt.
 \end{split}
 \end{equation}

Let $\delta E_n := E(\varepsilon,\sigma) - E(\varepsilon_n,\sigma_n); E =  E(\varepsilon,\sigma), E_n =  E(\varepsilon_n,\sigma_n) $. Then from   \eqref{minim4} it follows:

\begin{equation}\label{minim5}
 \begin{split}
F(\varepsilon,\sigma) - F(\varepsilon_n,\sigma_n)   &=  \frac{1}{2}  \int_\GammaT  \delta E_n (E  + E_n  )~ds dt
-  \int_\GammaT \Etobs  \delta E_n  ~ds dt \\
&= \frac{1}{2}  \int_\GammaT  \delta E_n (E + E_n  )~ds dt
-  \int_\GammaT \Etobs  \delta E_n  ~ds dt \\
&+  \int_\GammaT E  \delta E_n  ~ds dt -  \int_\GammaT E  \delta E_n  ~ds dt \\
&=  \frac{1}{2}  \int_\GammaT  \delta E_n  E_n  ~ds dt -  \int_\GammaT \Etobs  \delta E_n  ~ds dt  \\
&+  \int_\GammaT E  \delta E_n  ~ds dt -\frac{1}{2}  \int_\GammaT E \delta E_n ~ds dt
\\
&= \int_\GammaT (E - \Etobs)  \delta E_n  ~ds dt  -
\frac{1}{2}  \int_\GammaT (E - E_n) \delta E_n ~ds dt \\
&= \int_\GammaT (E - \Etobs)  \delta E_n  ~ds dt  -
\frac{1}{2}  \int_\GammaT (\delta E_n)^2 ~ds dt,
 \end{split}
 \end{equation}
and thus, we obtained \eqref{minim1}.
\end{proof}

\begin{theorem}

Assume  that conditions \eqref{coeffs} for functions $\varepsilon,\sigma$  holds true. Then there exists a unique minimizer of the non-regularized functional $F(\varepsilon,\sigma)$.
\end{theorem}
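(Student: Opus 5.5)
The plan is a direct-method argument: take a minimizing sequence, extract a convergent subsequence, and pass to the limit in the identity \eqref{minim1} of the preceding Lemma, using the Lipschitz continuity of Lemma \ref{lemma1}.

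First, $F\ge 0$, so $m:=\inf_{C_\varepsilon\times C_\sigma}F$ is finite. I would choose a minimizing sequence $(\varepsilon_n,\sigma_n)\in C_\varepsilon\times C_\sigma$ with $F(\varepsilon_n,\sigma_n)\to m$. Since $C_\varepsilon,C_\sigma$ are not compact in $C^2$, I would work in the closure of $\{\varepsilon\in C^2: \|\varepsilon\|_{C^2}\le M\}$ with the bounds of \eqref{coeffs}, i.e. add a uniform $C^2$ bound $M$. This is implicit in the constants $C(\|\varepsilon\|_{C^2})$ of Theorems 4.1–4.2. By Arzelà–Ascoli a subsequence then converges in $C^1(\overline\Omega)$, hence in $L^2(\Omega)$, to some $(\varepsilon,\sigma)$. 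The limit satisfies the pointwise constraints $\varepsilon=1$ in $\OFDM$ and $1\le\varepsilon\le d_1$ in $\OIN$, and similarly for $\sigma$. The limit stays admissible, with $C^{1,1}$ regularity, which suffices for the energy estimates.

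Next, apply \eqref{minim1} with this limit and $\delta E_n=E(\varepsilon,\sigma)-E(\varepsilon_n,\sigma_n)$. The trace theorem together with the energy estimate \eqref{Lest}, which controls $\nabla\delta E_n$ and $\delta E_n$ in $\OT$, gives
\[
\|\delta E_n\|_{\GammaT}\le C\big(\|\varepsilon-\varepsilon_n\|+\|\sigma-\sigma_n\|\big)\to 0 .
\]
By Cauchy–Schwarz the last two terms of \eqref{minim1} are bounded by $\tfrac12\|\delta E_n\|_{\GammaT}^2+\|E-\Etobs\|_{\GammaT}\|\delta E_n\|_{\GammaT}$, and both tend to zero. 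Hence $F(\varepsilon,\sigma)=\lim F(\varepsilon_n,\sigma_n)=m$, so $(\varepsilon,\sigma)$ is a minimizer.

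For uniqueness, suppose there are two minimizers $(\varepsilon_1,\sigma_1)$ and $(\varepsilon_2,\sigma_2)$. I would use \eqref{minim1} with $(\varepsilon_n,\sigma_n)$ replaced by the second one. Writing $\delta E=E_1-E_2$, we get $0=F_1-F_2=\langle\langle E_1-\Etobs,\delta E\rangle\rangle-\tfrac12\|\delta E\|^2_{\GammaT}$, and symmetrically with indices swapped. Adding the two identities yields $\|\delta E\|_{\GammaT}^2=0$, so the two minimizers produce the same boundary data. Concluding $(\varepsilon_1,\sigma_1)=(\varepsilon_2,\sigma_2)$ then requires injectivity of the data map $(\varepsilon,\sigma)\mapsto E|_{\GammaT}$, i.e. a uniqueness theorem for the coefficient inverse problem. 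Such a theorem is available, e.g. via Carleman estimates in the Bukhgeim–Klibanov framework, under suitable positivity assumptions on $f_0,f_1$.

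The main obstacles are twofold. One is this uniqueness step, which cannot come from convexity alone, since the non-regularized $F$ is not convex; I would invoke the CIP uniqueness result explicitly as an assumption. The other is the compactness step: the set in \eqref{coeffs} must be taken as bounded in $C^2$ (or closed in a weaker topology) to extract the convergent subsequence.
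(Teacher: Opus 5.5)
\textbf{Gap in the uniqueness step.} Identity \eqref{minim1} is only an algebraic rewriting of $\tfrac12\|E_1-\Etobs\|^2_{\GammaT}-\tfrac12\|E_2-\Etobs\|^2_{\GammaT}$. Applied to two minimizers, it therefore contains no information beyond $F_1=F_2$. Your two identities are
\[
0=\langle\langle E_1-\Etobs,\delta E\rangle\rangle_{\GammaT}-\tfrac12\|\delta E\|^2_{\GammaT},\qquad 0=\langle\langle E_2-\Etobs,-\delta E\rangle\rangle_{\GammaT}-\tfrac12\|\delta E\|^2_{\GammaT}.
\]
Their sum is $\langle\langle E_1-E_2,\delta E\rangle\rangle_{\GammaT}-\|\delta E\|^2_{\GammaT}=0$, which holds identically. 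It does not give $\|\delta E\|_{\GammaT}=0$, and their difference only returns $\|E_1-\Etobs\|_{\GammaT}=\|E_2-\Etobs\|_{\GammaT}$. Any two traces equidistant from $\Etobs$ satisfy both relations; the scalar case $\Etobs=0$, $E_1=1$, $E_2=-1$ already shows this. So you never obtain $E_1=E_2$ on $\GammaT$, and the injectivity of $(\varepsilon,\sigma)\mapsto E|_{\GammaT}$ you want to import has nothing to act on.

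Your argument closes only if $\min F=0$, i.e.\ for exact attainable data, where every minimizer reproduces $\Etobs$. The paper, however, works with noisy $\Etobs\in B_\delta[E^*_{\rm obs}]$, where the minimal misfit is generally positive. For that case you need a different ingredient, such as uniqueness of the nearest attainable trace to $\Etobs$ or some strict convexity of $F$. The paper takes the second route: it invokes local strict convexity of $F$ near the minimizer (Theorem 1.9.1.2 of \cite{BOOK}) together with the generalized Weierstrass theorem, not injectivity of the data map.

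\textbf{Existence: a different route.} Given Lemma \ref{lemma1}, your existence argument is workable but differs from the paper's. The paper keeps only weak $L^2$ convergence of the minimizing sequence. It treats the cross term of \eqref{minim1} via the adjoint identity \eqref{varformadjoint2}, rewriting it as a pairing linear in $(\varepsilon-\varepsilon_n,\sigma-\sigma_n)$ to obtain $F(\varepsilon,\sigma)\le\lim F(\varepsilon_n,\sigma_n)$. You instead obtain strong convergence by imposing an extra uniform $C^2$ bound and applying Arzel\`a--Ascoli, then use Lemma \ref{lemma1} with the trace theorem. This gives genuine continuity of $F$ along the sequence. It also avoids passing to a weak limit in a pairing that still contains the $n$-dependent fields $E(\varepsilon_n,\sigma_n)$.

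The price is that you prove a modified statement. The admissible class is restricted by $M$ and then closed in $C^1$, so your minimizer is only $C^{1,1}$ and need not lie in $C_\varepsilon\times C_\sigma$. In addition, Lemma \ref{lemma1} and \eqref{energyestimate} must be extended to such coefficients.
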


\begin{proof}
Using the fact that  the functional $F(\varepsilon,\sigma)$ is bounded from below  then  there exists a minimizing sequences $\{\varepsilon_n\} \in  C_\varepsilon,  \{\sigma_n\} \in C_\sigma $  which  converges weakly to  admissible functions
 $ \varepsilon \in C_\varepsilon$,  $ \sigma \in  C_\sigma$. 
By the Lemma 6.1 the following  expression  holds for the non regularized Tikhonov functional $ F(\varepsilon,\sigma)$:
 \begin{equation}\label{in1}
 \begin{split}
& F(\varepsilon,\sigma) = F(\varepsilon_n,\sigma_n) -
\frac{1}{2} \int_{\Gamma_T}  [\delta E_n  ]^2   ~dsdt \\
 & + \int_{\Gamma_T} [ E(\varepsilon, \sigma) - \Etobs ] \delta E_n ~dsdt,
 \end{split}
 \end{equation}
where $\delta E_n : = E(\varepsilon,\sigma) - E(\varepsilon_n,\sigma_n)  $.

Taking  $\delta E_n : =  \Eb $ in \eqref{in1}  and using then  \eqref{varformadjoint2} 
we get
 \begin{equation}\label{in2}
 \begin{split}
  F(\varepsilon,\sigma) & \leq
 F(\varepsilon_n,\sigma_n)  -
 ((\lambda , \, \varepsilon \ptt \Eb + \sigma  \pt \Eb - \Delta \Eb + \nabla \nabla \cdot \Eb  - \nabla \nabla \cdot (\varepsilon \Eb ))) \\
 &= F(\varepsilon_n,\sigma_n)  -  ((\lambda , \, -\delta \varepsilon_n \ptt \Et - \delta \sigma_n \pt \Et - \nabla \nabla \cdot ( \delta \varepsilon_n  \Et))).
 \end{split}
 \end{equation}
Here, we have used definition of the right hand side in the
problem \eqref{system9eq1} - \eqref{boundcond9}.

 By H\"older's inequality the term  $\int_0^T \lambda( -\delta \varepsilon_n \ptt \Et - \delta \sigma_n \pt \Et - \nabla \nabla \cdot ( \delta \varepsilon_n  \Et)
dt   \in L^{2}(\Omega)$.
Since $\varepsilon_n \ \rightharpoonup \varepsilon$,  $\sigma_n \ \rightharpoonup \sigma$  in $L^2(\Omega)$,
we get
\begin{equation*}
\begin{split}
((\lambda , \, -\varepsilon_n \ptt \Et - \sigma_n \pt \Et - \nabla \nabla \cdot (\epsilon_n \Et)))
\rightarrow
((\lambda , \, -\varepsilon \ptt \Et - \sigma \pt \Et - \nabla \nabla \cdot (\varepsilon \Et)))
\quad \mbox{as} \quad n\rightarrow \infty.
\end{split}
\end{equation*}

From the above expression follows that
\begin{equation}\label{in3}
 \begin{split}
  F(\varepsilon,\sigma) &\leq \lim_{n\rightarrow \infty} F(\varepsilon_n,\sigma_n) 
 \end{split}
 \end{equation}

Further, since  $\varepsilon_n\ \rightharpoonup \varepsilon,  \sigma_n \ \rightharpoonup \sigma$ in $L^2(\Omega)$,  it is also lower semi-continuous, what means that if
\begin{equation*}
\begin{split}
\parallel \varepsilon \parallel_{L^2(\Omega)} &\leq \lim \inf_{n\rightarrow \infty}\parallel \varepsilon_n \parallel_{L^2(\Omega)},\\
\parallel \sigma \parallel_{L^2(\Omega)} &\leq \lim \inf_{n\rightarrow \infty}\parallel \sigma_n \parallel_{L^2(\Omega)},
\end{split}
\end{equation*}
then this implies that
\begin{equation*}
F(\varepsilon,\sigma) \leq  \lim \inf_{n\rightarrow \infty} F(\varepsilon_n,\sigma_n),
\end{equation*}
and hence $F(\varepsilon,\sigma)$ is lower semi-continuous.
From the Theorem  1.9.1.2 of
\cite{BOOK} 
it also follows
that the functional $F(\varepsilon,\sigma)$ is locally strictly convex in the neighborhood of the minimizer $(\varepsilon,\sigma)$.
Thus,  using the generalized Weierstrass theorem (see \cite{zeidler2012applied}, Theorem 2D), we conclude that the regularized functional $F(\varepsilon,\sigma)$ has a unique minimizer what finishes our proof.  
 \end{proof}

\begin{lemma}

 The following  expression  holds for the regularized Tikhonov functional $ F(\varepsilon,\sigma)$:
 \begin{equation}\label{minim2}
 \begin{split}
F(\varepsilon,\sigma) &= F(\varepsilon_n,\sigma_n) -
\frac{1}{2} \int_\GammaT   (\delta E_n)^2   ~dsdt 
+ \int_\GammaT [ E(\varepsilon, \sigma) - \Etobs ] \delta E_n ~dsdt\\
&- \frac{1}{2} \gamma_{\varepsilon} \int_\Omega  (\delta \varepsilon_n)^2   ~dx
+ \gamma_{\varepsilon}  \int_\Omega [ \varepsilon - \varepsilon^0] \delta \varepsilon_n ~dx\\
&- \frac{1}{2} \gamma_{\sigma} \int_\Omega  (\delta \sigma_n)^2   ~dx
+ \gamma_{\sigma}  \int_\Omega [ \sigma - \sigma^0] \delta \sigma_n ~dx
 \end{split}
 \end{equation}
where $\delta E_n : = E(\varepsilon,\sigma) - E(\varepsilon_n,\sigma_n)  $, $\delta \varepsilon_n := \varepsilon - \varepsilon_n$, $\delta \sigma_n := \sigma - \sigma_n$.
\end{lemma}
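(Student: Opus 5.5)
The identity is purely algebraic, so I will prove it by splitting the regularized functional into its three quadratic pieces and treating each one with the same polarization identity used in Lemma 6.1. I write
\[
F(\varepsilon,\sigma)=F_0(\varepsilon,\sigma)+\frac{\gamma_\varepsilon}{2}\|\varepsilon-\varepsilon^0\|^2+\frac{\gamma_\sigma}{2}\|\sigma-\sigma^0\|^2,
\]
where $F_0$ denotes the data-misfit (non-regularized) part. The difference $F(\varepsilon,\sigma)-F(\varepsilon_n,\sigma_n)$ then separates into three independent differences. The first is the misfit difference. The second and third are differences of the two penalty terms, evaluated at $(\varepsilon,\sigma)$ and at $(\varepsilon_n,\sigma_n)$.

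For the misfit part I invoke Lemma 6.1 directly. It gives
\[
F_0(\varepsilon,\sigma)-F_0(\varepsilon_n,\sigma_n)=-\tfrac12\int_{\GammaT}(\delta E_n)^2\,ds\,dt+\int_{\GammaT}[E(\varepsilon,\sigma)-\Etobs]\,\delta E_n\,ds\,dt,
\]
with $\delta E_n=E(\varepsilon,\sigma)-E(\varepsilon_n,\sigma_n)$. No information about $E$ beyond its being an $L_2(\GammaT)$ function enters here, so no PDE estimate is needed.

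For the $\varepsilon$-penalty I repeat the computation of Lemma 6.1 pointwise in $\Omega$. I set $a=\varepsilon$, $b=\varepsilon^0$ and $c=\varepsilon_n$, so that $a-c=\delta\varepsilon_n$. Writing $c-b=(a-b)-(a-c)$ gives
\[
(a-b)^2-(c-b)^2=2(a-b)(a-c)-(a-c)^2.
\]
Hence
\[
\frac{\gamma_\varepsilon}{2}\bigl(\|\varepsilon-\varepsilon^0\|^2-\|\varepsilon_n-\varepsilon^0\|^2\bigr)=\gamma_\varepsilon\int_\Omega(\varepsilon-\varepsilon^0)\,\delta\varepsilon_n\,dx-\frac{\gamma_\varepsilon}{2}\int_\Omega(\delta\varepsilon_n)^2\,dx.
\]
The identical computation with $a=\sigma$, $b=\sigma^0$, $c=\sigma_n$ gives the two $\gamma_\sigma$ terms.

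Adding the three pieces and moving $F(\varepsilon_n,\sigma_n)$ to the right-hand side yields exactly the stated identity. There is no genuine obstacle. The only point requiring care is a consistent sign convention: the cross terms must be centred at the unperturbed point $(\varepsilon,\sigma)$, not at $(\varepsilon_n,\sigma_n)$. Otherwise the quadratic terms would appear with the opposite sign.

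All integrals are finite because $\varepsilon,\varepsilon_n\in C_\varepsilon\subset L^2(\Omega)$ and $\sigma,\sigma_n\in C_\sigma\subset L^2(\Omega)$. The fields $E(\varepsilon,\sigma)$ and $E(\varepsilon_n,\sigma_n)$ lie in $L_2(\GammaT)$ by the trace theorem applied to solutions satisfying Theorem 4.1.
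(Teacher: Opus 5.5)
Your proof is correct and follows the paper's own route: the misfit part comes from Lemma 6.1, and each penalty term is handled by the same quadratic identity with $a=\varepsilon$, $b=\varepsilon^0$, $c=\varepsilon_n$ (and analogously for $\sigma$). The only difference is cosmetic: you use $(a-b)^2-(c-b)^2=2(a-b)(a-c)-(a-c)^2$ directly, whereas the paper factors it as $(a-c)(a+c)-2b(a-c)$ and then adds and subtracts $\gamma_\varepsilon\int_\Omega \varepsilon\,\delta\varepsilon_n\,dx$ to reach the same form.
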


\begin{proof}
 In the proof  we use Lemma 6.2 for the non-regularized Tikhonov functional to obtain expression
  \eqref{minim1}. In the case of the regularized Tikhonov functional \eqref{tikh}  we   should estimate additional regularization terms  appearing in the computation of
   $F(\varepsilon,\sigma) - F(\varepsilon_n,\sigma_n)$.

Thus, we first estimate  the regularization terms for $\varepsilon$ as follows. Let us take
$ a = \varepsilon, b= \varepsilon^0, c= \varepsilon_n$ in \eqref{minim3} to obtain:
\begin{equation}\label{estimeps1}
\begin{split}
(\varepsilon - \varepsilon^0)^2 - (\varepsilon_n - \varepsilon^0)^2 =  (\varepsilon - \varepsilon_n)(\varepsilon + \varepsilon_n) - 2\varepsilon^0(\varepsilon - \varepsilon_n)
\end{split}
\end{equation}
and thus,  denoting $\delta \varepsilon_n := \varepsilon - \varepsilon_n$  we get:
\begin{equation}\label{estimeps2}
\begin{split}
& \frac{1}{2} \gamma_{\varepsilon} \int_{\Omega}[ (\varepsilon - \varepsilon^0)^2 - (\varepsilon_n - \varepsilon^0)^2 ] ~ dx = \frac{1}{2} \gamma_{\varepsilon} \int_{\Omega} [(\varepsilon - \varepsilon_n)(\varepsilon + \varepsilon_n) - 2\varepsilon^0(\varepsilon - \varepsilon_n) ]~dx \\
&= \frac{1}{2} \gamma_{\varepsilon} \int_{\Omega} [\delta \varepsilon_n (\varepsilon + \varepsilon_n) - 2\varepsilon^0 \delta \varepsilon_n ]~dx  +  \gamma_{\varepsilon}  \int_{\Omega}
\varepsilon  \delta \varepsilon_n ~dx -  \gamma_{\varepsilon}  \int_{\Omega}
\varepsilon  \delta  \varepsilon_n ~dx \\
&= - \frac{1}{2}  \gamma_{\varepsilon}  \int_{\Omega} (\varepsilon - \varepsilon_n) \delta \varepsilon_n ~dx +  \gamma_{\varepsilon}  \int_{\Omega}  (\varepsilon - \varepsilon^0) \delta \varepsilon_n ~dx \\
&= - \frac{1}{2}  \gamma_{\varepsilon}  \int_{\Omega}  (\delta \varepsilon_n)^2 ~dx +  \gamma_{\varepsilon}  \int_{\Omega}  (\varepsilon - \varepsilon^0) \delta \varepsilon_n ~dx.
\end{split}
\end{equation}

In a similar way we estimate terms for $\sigma$:
\begin{equation}\label{estsigma2}
\begin{split}
& \frac{1}{2} \gamma_{\sigma} \int_{\Omega}[ (\sigma - \sigma^0)^2 - (\sigma_n - \sigma^0)^2 ] ~ dx = \frac{1}{2} \gamma_{\sigma} \int_{\Omega} [(\sigma - \sigma_n)(\sigma + \sigma_n) - 2\sigma^0(\sigma - \sigma_n) ]~dx \\
&= \frac{1}{2} \gamma_{\sigma} \int_{\Omega} [\delta \sigma_n (\sigma + \sigma_n) - 2\sigma^0 \delta \sigma_n ]~dx  +  \gamma_{\sigma}  \int_{\Omega}
\sigma  \delta \sigma_n ~dx -  \gamma_{\sigma}  \int_{\Omega}
\sigma  \delta  \sigma_n ~dx \\
&= - \frac{1}{2}  \gamma_{\sigma}  \int_{\Omega} (\sigma - \sigma_n) \delta \sigma_n ~dx +  \gamma_{\sigma}  \int_{\Omega}  (\sigma - \sigma^0) \delta \sigma_n ~dx \\
&= - \frac{1}{2}  \gamma_{\sigma}  \int_{\Omega}  (\delta \sigma_n)^2 ~dx +  \gamma_{\sigma}  \int_{\Omega}  (\sigma - \sigma^0) \delta \sigma_n ~dx.
\end{split}
\end{equation}

Combining now \eqref{minim1} with \eqref{estimeps2},\eqref{estsigma2} we get \eqref{minim2}.
\end{proof}

\begin{theorem}

Assume  that conditions \eqref{coeffs} for functions $\varepsilon,\sigma$  holds true. Then there exists a unique minimizer of the regularized functional $F(\varepsilon,\sigma)$ given in \eqref{tikh}.
\end{theorem}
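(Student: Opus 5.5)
The proof follows the same scheme as the preceding theorem for the non-regularized functional, now using Lemma 6.3 in place of Lemma 6.1. The regularization terms are what make the argument cleaner: they give both coercivity and (local) strict convexity directly.

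\emph{Step 1: a minimizing sequence with weak limits.} The functional $F$ in \eqref{tikh} is bounded below by zero, so let $m:=\inf F$ over $C_\varepsilon\times C_\sigma$. Take a minimizing sequence $(\varepsilon_n,\sigma_n)\in C_\varepsilon\times C_\sigma$ with $F(\varepsilon_n,\sigma_n)\to m$. The pointwise bounds $1\le\varepsilon_n\le d_1$ and $1\le\sigma_n\le d_2$ make the sequence bounded in $L^2(\Omega)$. Moreover, $\gamma_\varepsilon\|\varepsilon_n-\varepsilon^0\|^2\le 2F(\varepsilon_n,\sigma_n)$ gives the same bound independently, so the regularization alone already yields coercivity. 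Hence, up to a subsequence, $\varepsilon_n\rightharpoonup\varepsilon$ and $\sigma_n\rightharpoonup\sigma$ weakly in $L^2(\Omega)$. The pointwise box constraints are convex and closed, so they survive the weak limit. (The $C^2$ regularity of the limit must be assumed or handled by working in the closure of the admissible sets, as the paper does implicitly.)

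\emph{Step 2: lower semicontinuity.} Apply Lemma 6.3 with $\delta E_n=E(\varepsilon,\sigma)-E(\varepsilon_n,\sigma_n)$, $\delta\varepsilon_n=\varepsilon-\varepsilon_n$, $\delta\sigma_n=\sigma-\sigma_n$. The three quadratic terms $-\frac12\|\delta E_n\|^2_{\GammaT}$, $-\frac{\gamma_\varepsilon}{2}\|\delta\varepsilon_n\|^2$ and $-\frac{\gamma_\sigma}{2}\|\delta\sigma_n\|^2$ are nonpositive, so they are dropped to get an inequality. For the linear terms:
\begin{itemize}
\item $\gamma_\varepsilon(\varepsilon-\varepsilon^0,\delta\varepsilon_n)\to0$ and $\gamma_\sigma(\sigma-\sigma^0,\delta\sigma_n)\to0$ directly by weak convergence.
\item The boundary term $\langle\langle E-\Etobs,\delta E_n\rangle\rangle_{\GammaT}$ is rewritten via \eqref{varformadjoint2} as $-((\lambda,\,-\delta\varepsilon_n\ptt\Et_n-\delta\sigma_n\pt\Et_n-\nabla\nabla\cdot(\delta\varepsilon_n\Et_n)))$, exactly as in \eqref{in2}.
\end{itemize}
This term is bounded using Theorems 4.1 and 4.2, which give uniform bounds on $\Et_n$ and $\lambda$, together with Lemma 6.1's weak-convergence argument. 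It then tends to zero, yielding $F(\varepsilon,\sigma)\le\liminf F(\varepsilon_n,\sigma_n)=m$. The limit is therefore a minimizer, which is existence via the generalized Weierstrass theorem \cite{zeidler2012applied}.

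\emph{Step 3: uniqueness.} The regularization terms $\frac{\gamma_\varepsilon}{2}\|\varepsilon-\varepsilon^0\|^2+\frac{\gamma_\sigma}{2}\|\sigma-\sigma^0\|^2$ are strictly convex, with second derivative $\gamma_\varepsilon I,\gamma_\sigma I$. The data-misfit term has a second derivative controlled by the Lipschitz and Fréchet estimates of Lemmas 5.1–5.2. Invoking Theorem 1.9.1.2 of \cite{BOOK}, for $\gamma_\varepsilon,\gamma_\sigma$ large relative to the noise level $\delta$, $F$ is strongly convex in a neighborhood of the minimizer, which gives uniqueness there.

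\emph{Main obstacle.} The delicate step is the passage to the limit in the coupling term $((\lambda,\delta\varepsilon_n\ptt\Et_n+\dots))$. Weak convergence of $\delta\varepsilon_n$ must be paired with fixed or strongly converging factors, so I would first try to extract strong convergence of $\Et_n$ via the energy estimate and a compactness (Aubin–Lions type) argument. A second weak point is that uniqueness is only local, so the conclusion should honestly be read as uniqueness in a neighborhood of the minimizer.
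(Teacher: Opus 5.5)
Your outline is the paper's own argument: Lemma 6.3, weak limits of a minimizing sequence, the adjoint rewrite \eqref{varformadjoint2} of the boundary term as in \eqref{in2}, lower semicontinuity, the generalized Weierstrass theorem and Theorem 1.9.1.2 of \cite{BOOK}. Discarding the three nonpositive quadratic terms by sign is more accurate than the paper, which claims they vanish under weak convergence. However, the two points you flag are genuine gaps, and the paper does not close them either: it passes to the limit in $((\lambda,-\varepsilon_n\ptt\Et-\sigma_n\pt\Et-\nabla\nabla\cdot(\varepsilon_n\Et)))$ as if $\Et$ did not depend on $n$.

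\textbf{Existence.} $C_\varepsilon$ and $C_\sigma$ give only $L^\infty$ bounds, with no uniform control of $\nabla\varepsilon_n$. The constant in Theorem 4.1 depends on $\|\varepsilon_n\|_{C^2}$, because the energy identity contains $((\nabla\cdot E,\nabla\varepsilon\cdot\pt E))$. So the $n$-uniform bounds on $\tilde E_n$ you invoke are not available, and your Aubin--Lions step has nothing to start from.

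The weak limit also need not lie in $C_\varepsilon$, and this cannot simply be assumed. For merely bounded $\varepsilon$, the form $((\nabla\cdot(\varepsilon-1)E,\nabla\cdot\lambda))$ in \eqref{Lweak} is not even bounded on $H^1$. Hence $E(\varepsilon,\sigma)$, $\lambda$ and \eqref{varformadjoint2} are unavailable at the limit point. Passing to the weak closure does not help, since that closure is just the box-constrained $L^\infty$ set.

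The adjoint rewrite is only a reformulation. The statement $\langle\langle E(\varepsilon,\sigma)-\Etobs,\delta E_n\rangle\rangle_{\GammaT}\to 0$ is exactly weak convergence $E(\varepsilon_n,\sigma_n)\rightharpoonup E(\varepsilon,\sigma)$ in $L^2(\GammaT)$. A bounded but $n$-dependent factor paired with $\delta\varepsilon_n\rightharpoonup 0$ need not tend to zero. You would need strong compactness of, e.g., $\int_0^T\pt\lambda\cdot\pt\tilde E_n\,\dt$ in $L^1(\Omega)$.

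The missing ingredient is compactness of the admissible set in a topology where the forward map is continuous. For instance, impose $\|\varepsilon\|_{W^{2,\infty}},\|\sigma\|_{W^{2,\infty}}\le M$, with $W^{2,\infty}$ replacing $C^2$ so the set is closed. Then:
\begin{itemize}
\item Arzel\`a--Ascoli gives $\varepsilon_n\to\varepsilon$ and $\sigma_n\to\sigma$ in $C^1(\overline{\Omega})$.
\item Theorem 4.1 and its time-differentiated version hold with $n$-independent constants.
\item The energy estimate for $\Eb_n=\tilde E_n-E$ gives $\Eb_n\to 0$ in the energy norm, after you move one derivative off $\delta\varepsilon_n$ in $\nabla\nabla\cdot(\delta\varepsilon_n\tilde E_n)$ by integrating by parts in space and time.
\item The trace theorem then gives convergence in $L^2(\GammaT)$.
\end{itemize}
So $F(\varepsilon_n,\sigma_n)\to F(\varepsilon,\sigma)$ directly, and no adjoint is needed.

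\textbf{Uniqueness.} The statement asserts a unique minimizer, and, as you concede, your Step 3 does not deliver it. Theorem 1.9.1.2 of \cite{BOOK} gives strong convexity only in a small neighbourhood of the exact coefficients. It also requires hypotheses not verified here: small noise with $\gamma_\varepsilon,\gamma_\sigma$ tied to it, a Lipschitz-continuous Fr\'echet derivative of the coefficient-to-data map, and closeness to the exact coefficients.

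Lemmas 5.1--5.2 supply neither a Lipschitz derivative nor any second-order control. Their estimates in the $L^2$ norm of $\epsd$ also cannot control $\nabla\nabla\cdot(\epsd\,\Et)$, which contains second derivatives of $\epsd$.

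Local strict convexity near one minimizer does not rule out other global minimizers of the nonconvex $F$. The generalized Weierstrass theorem yields existence only, and the paper's proof makes exactly this leap. So even after the compactness gap is repaired, this route proves existence of a minimizer, plus uniqueness within the convexity neighbourhood of \cite{BOOK} when that theorem's hypotheses hold. It does not prove the global uniqueness claimed in the statement.
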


\begin{proof}

We use proof for the non-regularized functional noting that by the Lemma 6.3 the  expression
 \eqref{minim2} holds for the regularized Tikhonov functional $ F(\varepsilon,\sigma)$.
Since $\varepsilon_n \ \rightharpoonup \varepsilon$,  $\sigma_n \ \rightharpoonup \sigma$  in $L^2(\Omega)$,  we get  then that the
  terms  with $\delta \varepsilon_n, \delta \sigma_n$  in \eqref{minim2} tends to zero as
$\quad n\rightarrow \infty$,   as well as
\begin{equation*}
\begin{split}
\lim_{\quad n\rightarrow \infty}  ((\lambda , \, -\varepsilon_n \ptt \Et - \sigma_n \pt \Et - \nabla \nabla \cdot (\epsilon_n \Et)))
=
((\lambda , \, -\varepsilon \ptt \Et - \sigma \pt \Et - \nabla \nabla \cdot (\varepsilon \Et))).
\end{split}
\end{equation*}
 Using these facts in \eqref{minim2} we get the inequality 
\begin{equation*}
 \begin{split}
  F(\varepsilon,\sigma) &\leq \lim_{n\rightarrow \infty} F(\varepsilon_n,\sigma_n) .
 \end{split}
 \end{equation*}

The rest of the proof
 is the same as the proof  following after \eqref{in3}.
\end{proof}

\section{Algorithms for solution of inverse problem}

\label{sec:algorithms}

In our numerical simulations we are using
two different gradient-based algorithms: conjugate gradient algorithm (CGA)
 and  adaptive CGA (ACGA).
ACGA uses adaptive
finite element method in space which significantly improves reconstructions of
$\varepsilon, \sigma$ obtained on the initially non-refined mesh using   usual  CGA - see details
 on this method in \cite{BL1,BOOK,BTKM2}.

Let us define the following functions which are obtained from optimal conditions
$ \pdv{L}{\varepsilon}(u; \tilde{\varepsilon}) = 0$ and $\pdv{L}{\sigma}(u; \tilde{\sigma})= 0$
 derived in Lemma 3.2 
and which we will use in the conjugate gradient algorithm (CGA):
\begin{align}
   & \begin{aligned}\label{gradeps}
        & g_\varepsilon (x) := [\gamma_\varepsilon (\varepsilon - \varepsilon^0)   - \lambda(x,0) f_1 
         - \int_0^T \pt\lambda \pt E ~ \dt
       +  \int_0^T  \div \lambda \div  E~ \dt](x),
     \end{aligned} \\
   & \begin{aligned}
       g_\sigma (x) & := [\gamma_\sigma (\sigma - \sigma^0)   - f_0 \lambda(x,0)  - \int_0^T E \pt \lambda \, \dt] (x).
     \end{aligned}
\end{align}
In \eqref{gradeps} we have used that $\div ( \bar{\varepsilon} E) = \nabla  \bar{\varepsilon} E + \bar{\varepsilon}\div E \approx \bar{\varepsilon}\div E . $

Let $\varepsilon^m$ and $\sigma^m$ are the coefficients computed on the iteration $m$ of the CGA,
and $E^m := E(\varepsilon^m, \sigma^m)(x,t)$, $\lambda^m := \lambda (E^m, \varepsilon^m, \sigma^m)(x,t)$, $g_\varepsilon^m := g_\varepsilon^m(x)$ and $g_\sigma^m := g_\sigma^m(x)$ where $E(\varepsilon^m, \sigma^m)$ and $\lambda (E^m, \varepsilon^m, \sigma^m)$ are the solutions of the forward and adjoint problems, respectively.
Algorithm 1 summarizes the conjugate gradient algorithm in continuous setting.


\begin{algorithm}[hbt!]
  \centering
  \caption{ Conjugate Gradient Algorithm (CGA):For iterations $m = 0, \dots,M$ perform following steps. }
    \begin{algorithmic}[1]

    \STATE Initialization: set $m = 0$ and choose initial guesses $\varepsilon^0$, $\sigma^0, \gamma_\varepsilon^0, \gamma_\sigma^0, \alpha_\varepsilon^0, \alpha_\sigma^0$.

    \STATE Calculate $E^m$, $\lambda^m$, $g^m_\varepsilon$ and $g^m_\sigma$.

    \STATE Update the dielectric permittivity as:
          \begin{align*}
            \varepsilon^{m+1}     & := \varepsilon^m + \alpha_\varepsilon^m d_\varepsilon^m,                    \\
            \quad d_\varepsilon^m & := -g_\varepsilon^{m} + \beta_\varepsilon^m d_\varepsilon^{m-1}, \\
            \beta_\varepsilon^m   & := \frac{\|g_\varepsilon^{m}\|^2}{\|g_\varepsilon^{m-1}\|^2},
          \end{align*}
    and the  conductivity as:
          \begin{align*}
            \sigma^{m+1}     & := \sigma^m + \alpha_\sigma^m d_\sigma^m,                          \\
            \quad d_\sigma^m & := -g_\sigma^{m} + \beta_\sigma^m d_\sigma^{m-1}, \\
            \beta_\sigma^m   & := \frac{\|g_\sigma^{m}\|^2}{\|g_\sigma^{m-1}\|^2},
          \end{align*}
          where $\alpha_\varepsilon^m$, $\alpha_\sigma^m$ are optimal step sizes in CGA with $d_\varepsilon^0 := - g_\varepsilon^0$, $d_\sigma^0 := - g_\sigma^0$.

    \STATE Compute new optimal step-sizes as
          \begin{equation}
            \begin{split}
              \alpha_\varepsilon^{m+1} = - \frac{(g_\varepsilon^m, d_\varepsilon^m)}{ \gamma_\varepsilon^m ( d_\varepsilon^m, d_\varepsilon^m )}, \quad
              \alpha_\sigma^{m+1} =- \frac{(g_\sigma^m, d_\sigma^m)}{ \gamma_\sigma^m ( d_\sigma^m, d_\sigma^m )}.
            \end{split}
          \end{equation}

    \STATE Compute new regularization parameters iteratively for any $p \in (0,1)$ via  rules of \cite{BKS} as

          \begin{equation}
            \begin{split}
              \gamma_\varepsilon^{m+1} = \frac{ \gamma_\varepsilon^0}{(m+1)^p},  \quad
              \gamma_\sigma^{m+1} =  \frac{ \gamma_\sigma^0}{(m+1)^p}
            \end{split}
          \end{equation}

    \STATE Terminate the algorithm if either $\|\varepsilon^{m+1} - \varepsilon^m \| < \eta_\varepsilon^1$ or $\|\sigma^{m+1} - \sigma^m \| < \eta_\sigma^1$, or $\|g_\varepsilon^m\| < \eta_\varepsilon^2$ or $\|g_\sigma^m\| < \eta_\sigma^2$, where $\eta^1_\varepsilon$, $\eta^2_\varepsilon$, $\eta^1_\sigma$ and $\eta^2_\sigma$ are tolerances chosen by the user. 
	  Otherwise, set $m: = m+1$ and repeat the algorithm from step 2.

 \end{algorithmic}
\end{algorithm}

The main idea of the adaptive local mesh refinement used in ACGA
is that the finite element mesh $K_h$
  constructed in the  finite element domain $\Omega_{\rm FEM}$
should be refined in such elements $K$  of $K_h$ where
 both functions
$|h {\varepsilon}_h^M|, |h {\sigma}_h^M| $ achieve its maximum
values  at the final iteration $M$ in CGA:
\begin{equation}\label{meshref}
  \begin{split}
    |h {\varepsilon}_h^M|
    \geq \widetilde{\beta}_\varepsilon \max \limits_{K_h} |h {\varepsilon}_h^M|, \\
    |h {\sigma}_h^M| \geq \widetilde{\beta}_\sigma \max \limits_{K_h} |h {\sigma}_h^M|.
  \end{split}
\end{equation}
Here, numbers $ \widetilde{\beta}_\varepsilon \in (0,1) , \widetilde{\beta}_\sigma \in (0,1)$
should
be chosen computationally  depending on the results of reconstruction in CGA,
and $h = h(x)$ is a piecewise-constant mesh function
representing the local diameter of the elements and is defined as ( see details in \cite{KN})
\begin{equation}\label{meshfunction}
  h |_K = h_K ~~~ \forall K \in K_h.
\end{equation}

The mesh refinements recommendations \eqref{meshref} are based on a posteriori error estimates for the
errors $| \varepsilon - {\varepsilon}_h^M|$, $| \sigma -
 \sigma_h^M|$ in the reconstructed functions $\varepsilon_h^M, \sigma_h^M$,
respectively. The proofs of  a posteriori error estimates
 can be derived using technique of \cite{BOOK}
and
is
topic of the ongoing research.
Algorithm 2 summarizes an  adaptive conjugate
gradient algorithm in the discrete setting.

\begin{algorithm}[hbt!]
  \centering
  \caption{ Adaptive Conjugate Gradient Algorithm (ACGA): For mesh
    refinements $k = 0, \dots, N$ perform the steps below.  }
    \begin{algorithmic}[1]

\STATE Choose initial
            spatial mesh ${K_h}_0$ in $\OFEM$.

      \STATE Compute ${\varepsilon_h}_k$, $ {\sigma_h}_k$ on mesh ${K_h}_k$ according to the CGA algorithm.

      \STATE Refine locally such elements in the finite element mesh ${K_h}_k$  where
            \begin{equation*}
              \begin{split}
                |h {\varepsilon_h}_k|
                \geq {\widetilde{\beta}_{{\varepsilon}_k}}  \max_{K  \in {K_h}_k} |h {\varepsilon_h}_k|, \\
                |h {\sigma_h}_k| \geq {\widetilde{\beta}_{{\sigma}_k}}  \max_{K  \in {K_h}_k}  |h {\sigma_h}_k|.
              \end{split}
            \end{equation*}
            Here, $ {\widetilde{\beta}_{{\varepsilon}_k}} \in (0,1)  , {\widetilde{\beta}_{{\sigma}_k}} \in (0,1)$
            are constants chosen by the user.

      \STATE Construct the new mesh ${K_h}_{k+1}$ and interpolate ${\varepsilon_h}_k$, ${\sigma_h}_k$ as well as measurements $E_{\rm obs}$ onto it.

      \STATE Terminate the algorithm if either $\| {\varepsilon_h}_{k+1} - {\varepsilon_h}_k \| < \theta_\varepsilon^1$,
      or $\| {\sigma_h}_{k+1} - {\sigma_h}_k \| < \theta_\sigma^1$, or $\|g_\varepsilon^m\| < \theta_\varepsilon^2$,
      or $\|g_\sigma^m\| < \theta_\sigma^2$, ($m$ is the iteration in CGA), where $\theta^1_\varepsilon$, $\theta^2_\varepsilon$, $\theta^1_\sigma$ and $\theta^2_\sigma$ are tolerances chosen by the user. 
	    Otherwise, set $k:= k+1$ and repeat the algorithm from step 2.

 \end{algorithmic}
\end{algorithm}

Note that we perform   mesh refinements only in space and not in
time to be able smoothly run the domain decomposition finite element/finite difference method
for solutions of the forward and adjoint problems - see details of
this method for our model problem in \cite{BL1}.

\section{Numerical examples}

\label{sec:numex}

In this section we present two- and three-dimensional numerical tests
for the reconstruction of both the dielectric permittivity and
conductivity functions, which are used to assess the performance of
the optimization-based CGA and ACGA algorithms introduced in
Section~\ref{sec:algorithms}.

All two-dimensional experiments are carried out using the CGA
described in Section~\ref{sec:algorithms}, with the corresponding code
developed and tested in MATLAB. The three-dimensional experiments
employ the ACGA from Section~\ref{sec:algorithms} and are implemented
in C++ using the PETSc-based \cite{petsc} WavES library \cite{waves}. Visualization
of the three-dimensional results is performed using ParaView and GiD.

\subsection{2D tests}

The aim of this section is to present the results of applying our theory numerically in a 2D setting. 
This was done in two different tests. Before we get into the details, we specify the two-dimensional domain, which is considered to be
$\Omega = \{ (x,y): x \in (0,1), y \in (0,1) \}$
in space, and $\OT := \Omega\times(0,1.2]$ in space  and time. Our exact coefficients $\varepsilon$ and $\sigma$ that we aim to reconstruct are defined as 
\begin{align*}
    \varepsilon(x,y) &:= 1 + 3e^{-((x-0.5)^2+(y-0.7)^2)/0.002}, \\  
    \sigma(x,y) &:= 1 + \frac{3}{2}e^{-((x-0.5)^2+(y-0.7)^2)/0.002}.
\end{align*}

These constructions are somewhat realistic and inspired by studies \cite{BR2}, while also being in line with assumptions \eqref{coeffs}.
Figure  \ref{fig:badrec}-c) and Figure \ref{fig:goodrec}-c) show spatial distribution of exact functions  $\varepsilon$ and $\sigma$, correspondingly.
These coefficients were then reconstructed by choosing some initial guess for each, $\varepsilon^0$ and $\sigma^0$, and then applying conjugate gradient  Algorithm 1 of section \ref{sec:algorithms}.

In the two-dimensional numerical tests, we also use an approximation of system (\ref{system6eq1}--c), a damped wave system. Recently in
 \cite{RomanovKlib} it was shown that in the computational setting using plane waves Maxwell's equations can be approximated by the acoustic wave equation.
Since
in our two-dimensional numerical simulations  
we are initializing a plane wave only for the one component of the electric field  then
accordingly to \cite{RomanovKlib}  the Maxwell' system can be approximated by the  damped wave equation
  in the case of the conductive media.
 Before we introduce it, we define the boundaries $\Gamma_i, ~i = 1,2,3,4$, as the left, lower, right and upper sides of the unit square. Moreover, we let $\Gamma_{i,T} := \Gamma_i \times [0,1.2]$, and specifically for the left side $\Gamma_{1,t} = 
 \Gamma_1 \times [0,\frac{\pi}{10}]$. Now, we define the system used for numerical examples as

\begin{subnumcases}{}
    \varepsilon \ptt E + \sigma \pt E - \Delta E = 0  & for $ (x,y, t) \in \OT $, \label{system7eq1}\\
    E(x,y, 0) = 0, ~ \pt E(x,y, 0) = 0 & for $(x,y) \in \Omega$, \label{initcond7} \\
    \begin{aligned}
        &\pn E = \sin (20 t) && \text{for } (x,y,t) \in \Gamma_{1,t}, \\
        &\pn E = - \pt E && \text{for } (x,y,t) \in \Gamma_3 \cup \Gamma_{1,T} \setminus \Gamma_{1,t}, \\
        &\pn E = 0  && \text{for } (x,y,t) \in \Gamma_2 \cup \Gamma_4, \\
    \end{aligned}\label{boundcond7}
\end{subnumcases}

The system above represents introducing one period of a planar wave, $\sin(20t)$, from the boundary $\Gamma_1$, and after that make use of absorbing boundary conditions, while $\Gamma_2$ and $\Gamma_4$ are governed by homogeneuos Neumann conditions.

Before getting into the results, we will define  errors  which we have  studied  computationally :  relative $L^2$ norms and supremum, $\infty$ norms. This means that, for example, the relative $L^2$ error for $\varepsilon$ and $\sigma$ are calculated as 
\begin{equation}
  e_{\varepsilon}^m : =   \frac{\| \varepsilon^m - \varepsilon \|}{\|\varepsilon\|}, \quad  e_{\sigma}^m : =
  \frac{\|\sigma^m - \sigma \|}{\| \sigma \|},
\end{equation}
where $\varepsilon^m$ and $\sigma^m$
 are the reconstructed coefficients after $m$ iterations of conjugate gradient  Algorithm 1. The supremum errors are calculated in the same fashion but with the supremum norm instead of the $L^2$ norm. 
The errors $e^m_E$ for the discrepancy term $E- \tilde{E}_{\rm obs} $ were calculated as 
\begin{equation}
    e^m_E := \frac{\|E^m - \tilde{E}_{\rm obs} \|}{\| E^m \|}, 
\end{equation}
where $E^m := E (\varepsilon^m, \sigma^m)$, $E := E(\varepsilon,\sigma)$ and $ \tilde{E}_{\rm obs}    = E + \delta$ where $\delta \sim N(0,0.1)$. As for the coefficients, the supremum errors are calculated the same way, just with the supremum norm. 

The following results come from two different tests, using two different sets of initial guesses in the settings described above. 

\begin{figure}[h]
    \centering
    \includegraphics[trim = 3.5cm 2cm 1.4cm .9cm, width=.7\linewidth,clip= ]{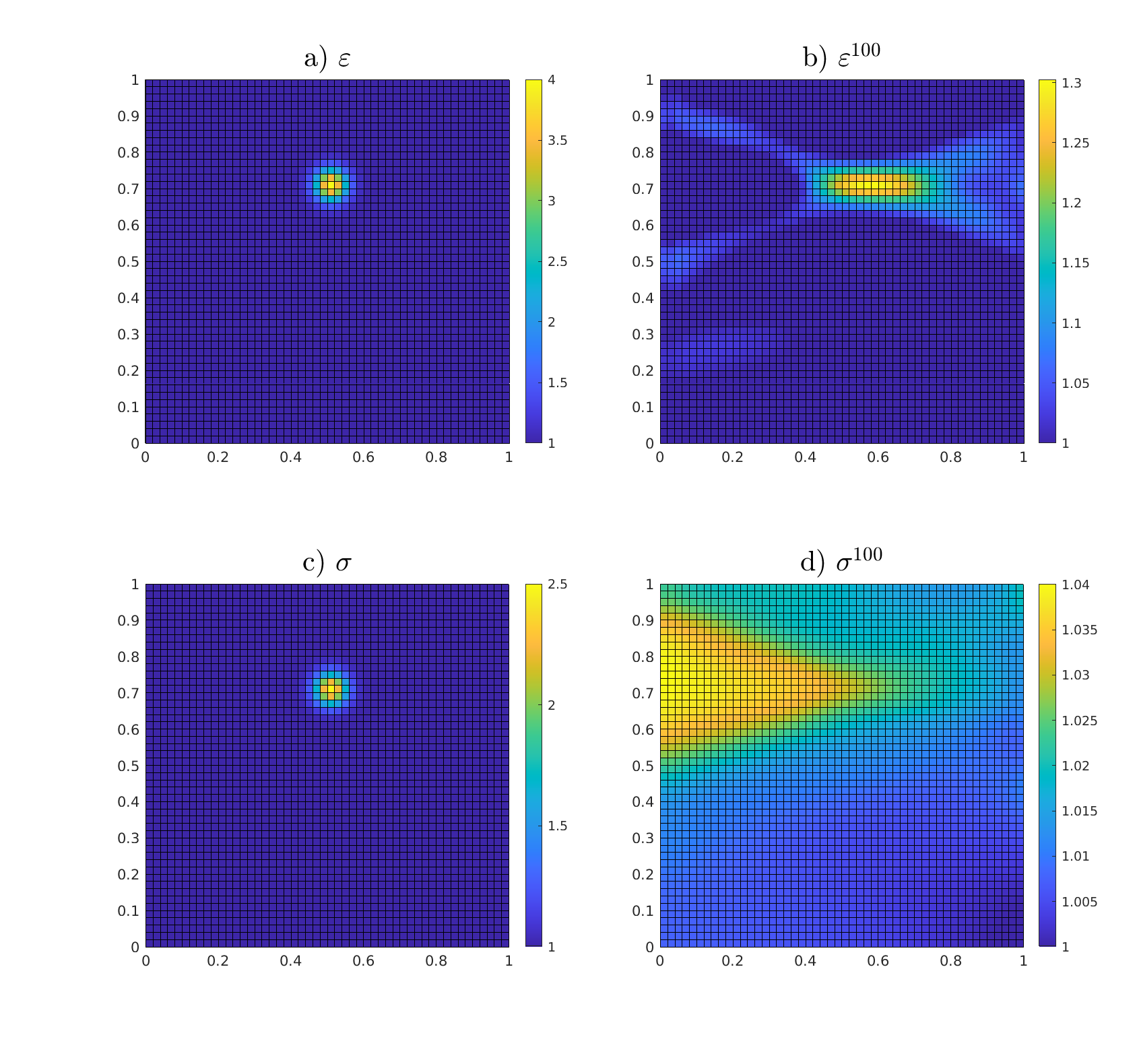}
    \caption{\textit{ 2D tests, test 1. Results from CGA, Algorithm 1. Figures a) and b) are connected to $\varepsilon$:
  a) exact $\varepsilon$,  b)  reconstructed $\varepsilon^{100}$ after 100 iterations in CGA.
Figures
     c) and d) are connected to $\sigma$: c) exact $\sigma$,   d) reconstructed  $\sigma^{100}$ after 100 iterations in CGA.}}
    \label{fig:badrec}
\end{figure}

\textbf{Test 1:} In the first test, we use the initial guesses $\varepsilon^0 (x,y) := 1$ and $\sigma^0(x,y) := 1$, representing the values of background medium. The reconstructions $\varepsilon^{100}$ and $\sigma^{100}$, made after 100 conjugate gradient steps can be seen in \autoref{fig:badrec}. One can immediately note that the reconstruction of $\varepsilon$ is quite localized in the regions we expect, and while the $\sigma$ reconstruction does not seem to be correctly localized in the $x$-direction, it does seem to be localized in the $y$-direction.

Analyzing the results further in \autoref{fig:baderr}, we observe that the $L^2$ errors and supremum errors decay consistently for  both coefficients $\varepsilon$, $\sigma$, as well as for   $E - \tilde{E}_{\rm obs} $. We note the exception of the supremum norm of $E - \tilde{E}_{\rm obs} $,  and a slight upwards trend at latter iterations in the $L^2$ error of $\sigma$. However, these deviations could be the result of reaching a local minima, since we are using conjugate gradient method.
These disturbances disappear as an initial guesses  $\varepsilon^0, \sigma^0$ becomes closer to the exact values of coefficients  $\varepsilon, \sigma$. We will show this in the Test 2.

\begin{figure}[h!]
    \centering
    \includegraphics[trim = 5.7cm 2.1cm 4.5cm 1.3cm, width=\linewidth,clip=]{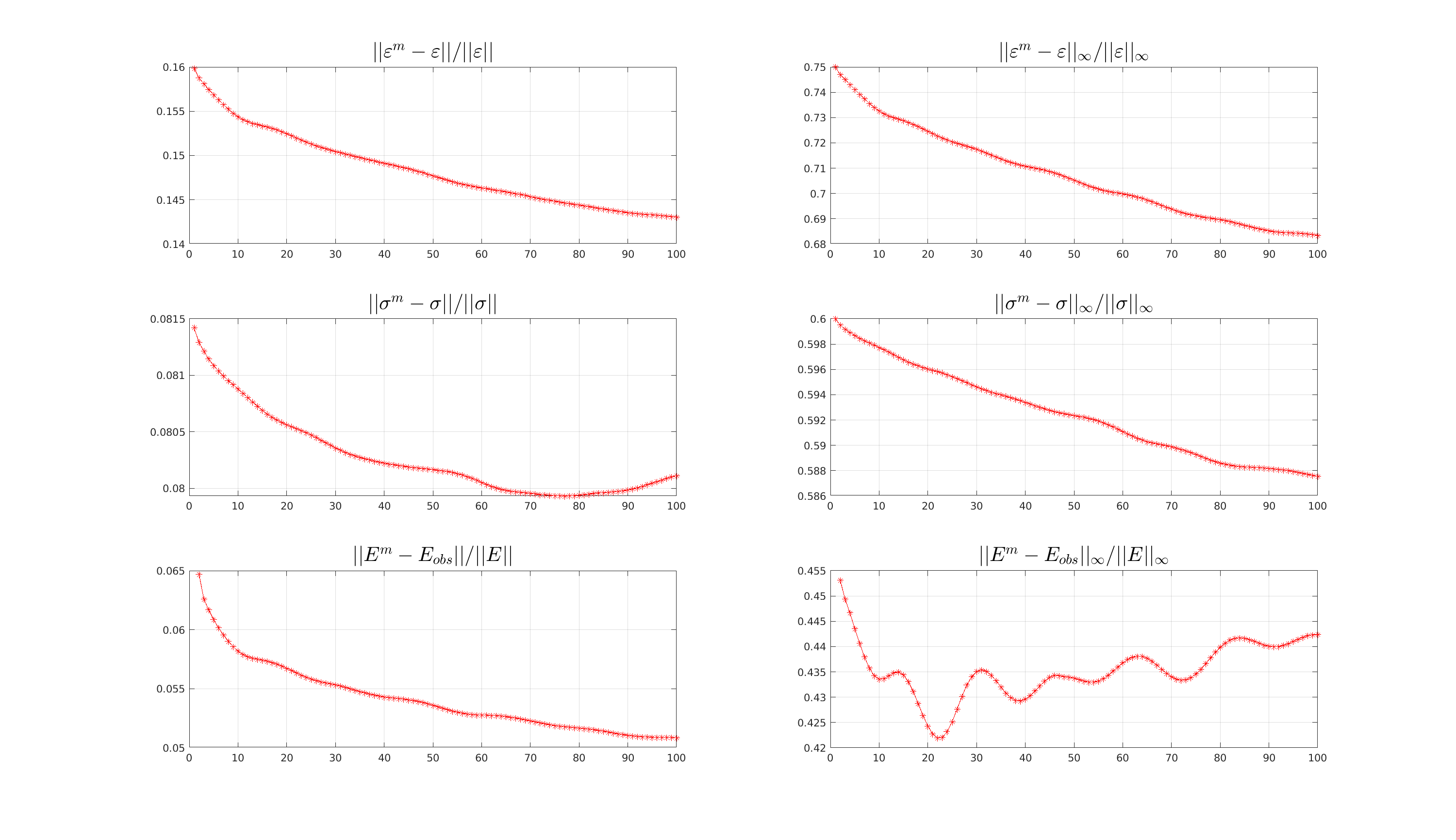}
    \put(-268,147){\small a)}
    \put(-78,147){\small b)}
    \put(-268,72){\small c)}
    \put(-78,72){\small d)}
    \put(-268,-2){\small e)}
    \put(-78,-2){\small f)}
    \caption{\textit{ 2D tests, test 1.
Convergence plots of errors over CGA iterations:
  a), b) errors $e^m_\varepsilon$ concerning $\varepsilon$;
     c), d) errors $e^m_\sigma$ concerning $\sigma$;  e), f) errors $e^m_E$ concerning the term $E- \tilde{E}_{\rm obs} $.
     We note that figures a), c) and e) show the relative $L^2$-errors, and figures  b), d) and f) show the supremum norms.}}
    \label{fig:baderr}
\end{figure}

\begin{figure}[h!]
    \centering
    \includegraphics[trim = 6cm 2.7cm 4.5cm 1.4cm, width=.7\linewidth,clip=]{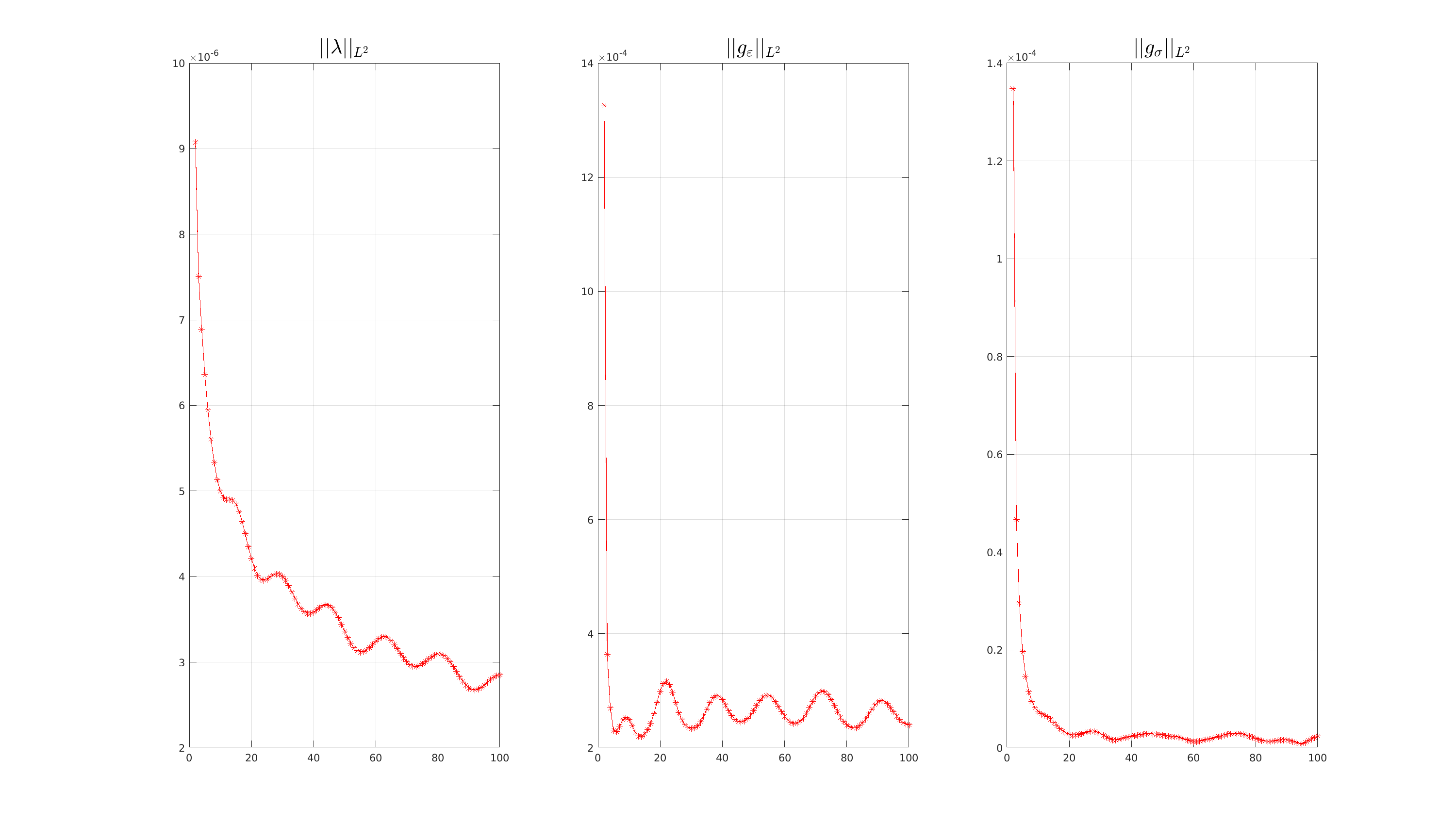}
    \put(-209,-8){a)}
    \put(-123,-8){b)}
    \put(-38,-8){c)}
    \caption{\textit{2D tests,  test 1.
Convergence plots of $L^2$ norms over CGA iterations:
a) $\| \lambda \|$; b)  $\|g_\varepsilon \|$;  c) $\| g_\sigma\|$.}}
    \label{fig:badnorms}
\end{figure}

Finally, we can affirm the claims we made, by looking  further at the norms of the adjoint   problem solution  $\| \lambda\|$,  and   norms of the  gradients $\| g_\varepsilon\|$ and $\| g_\sigma\|$ shown in \autoref{fig:badnorms}. We observe that the norm of the adjoint problem solution and the gradients decay, which is to be expected from Theorem 4.2 as well as  from the conjugate gradient Algorithm 1.

\begin{figure}[h!]
    \centering
    \includegraphics[trim = 3.5cm 2cm 1.4cm .9cm, width=.7\linewidth,clip= ]{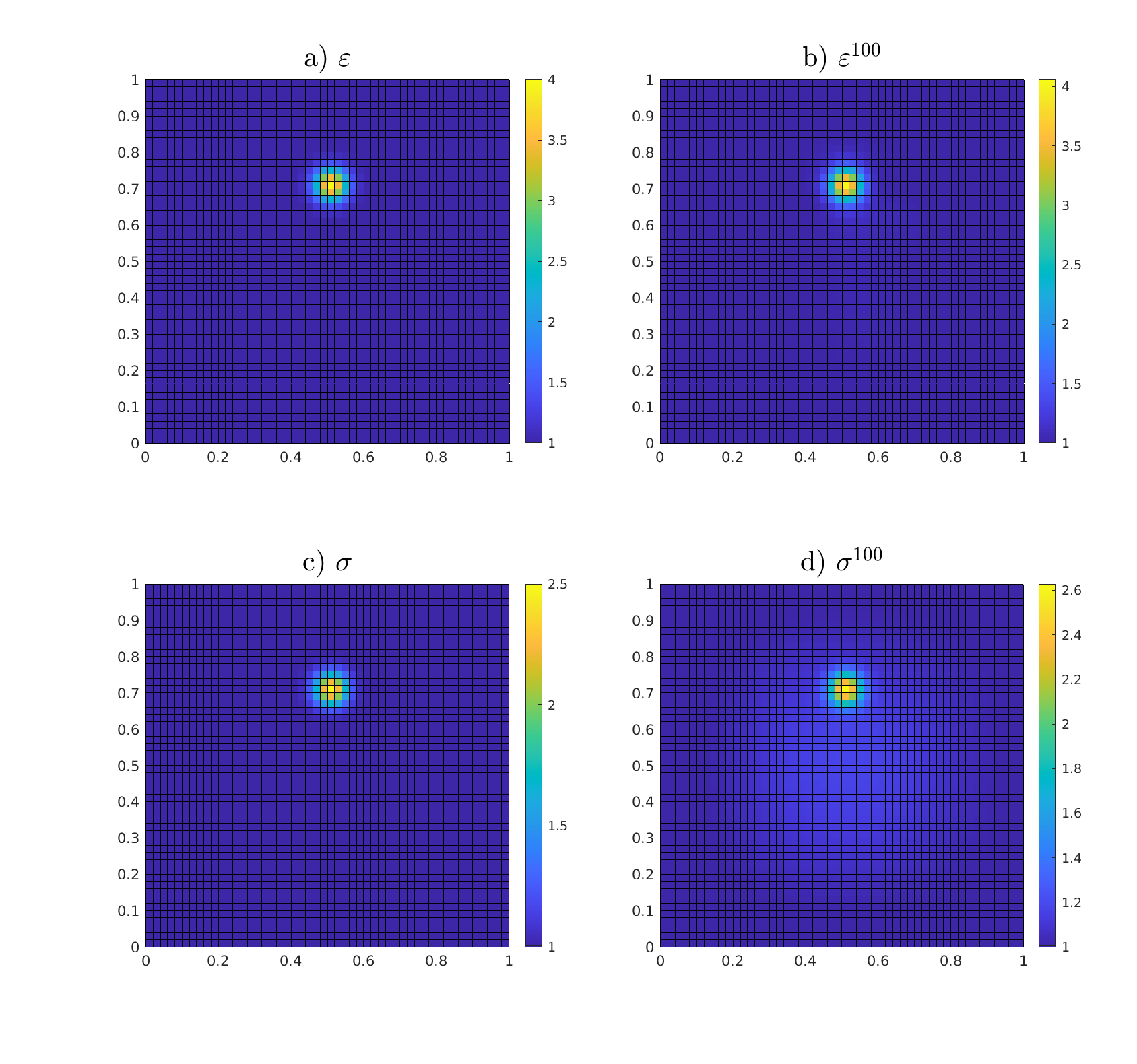}
    \caption{\textit{ 2D tests, test 2.
 Results from CGA, Algorithm 1. Figures a) and b) are connected to $\varepsilon$:
  a) exact $\varepsilon$,  b)  reconstructed $\varepsilon^{100}$ after 100 iterations in CGA.
Figures
     c) and d) are connected to $\sigma$: c) exact $\sigma$,   d) reconstructed  $\sigma^{100}$ after 100 iterations in CGA.}}
    \label{fig:goodrec}
\end{figure}

\newpage

\textbf{Test 2:} The second test was made the same way as the first, just with new initial guesses. These initial guesses were informed by the exact coefficients, but slightly perturbed, to see how the algorithms performed when initiated close to the global optima. The initial guesses were defined as
\begin{align*}
    &\varepsilon^0 (x,y) := \varepsilon(x,y) + 20\|\varepsilon\|_\infty x^2y^2(1-x)^2(1-y)^2, \\
    &\sigma^0(x,y) := \sigma(x,y) + 20\|\sigma\|_\infty x^2y^2(1-x)^2(1-y)^2,
\end{align*}
and the results  of reconstruction can be seen in \autoref{fig:goodrec}. We observe that after initiating the  Algorithm 1 close to the true values of the coefficients it does not deviate from the true solution. 
This conclusion is further strengthened in \autoref{fig:gooderr}, where we now observe  how all errors decay consistently without exceptions as seen in the Test 1. This indicates that the  conjugate gradient  Algorithm 1 really converging towards the true value  of parameters $\varepsilon, \sigma$ and not to some local minimum.

\begin{figure}[h]
    \centering
    \includegraphics[trim = 5.7cm 2.1cm 4.5cm 1.3cm, width=\linewidth,clip=]{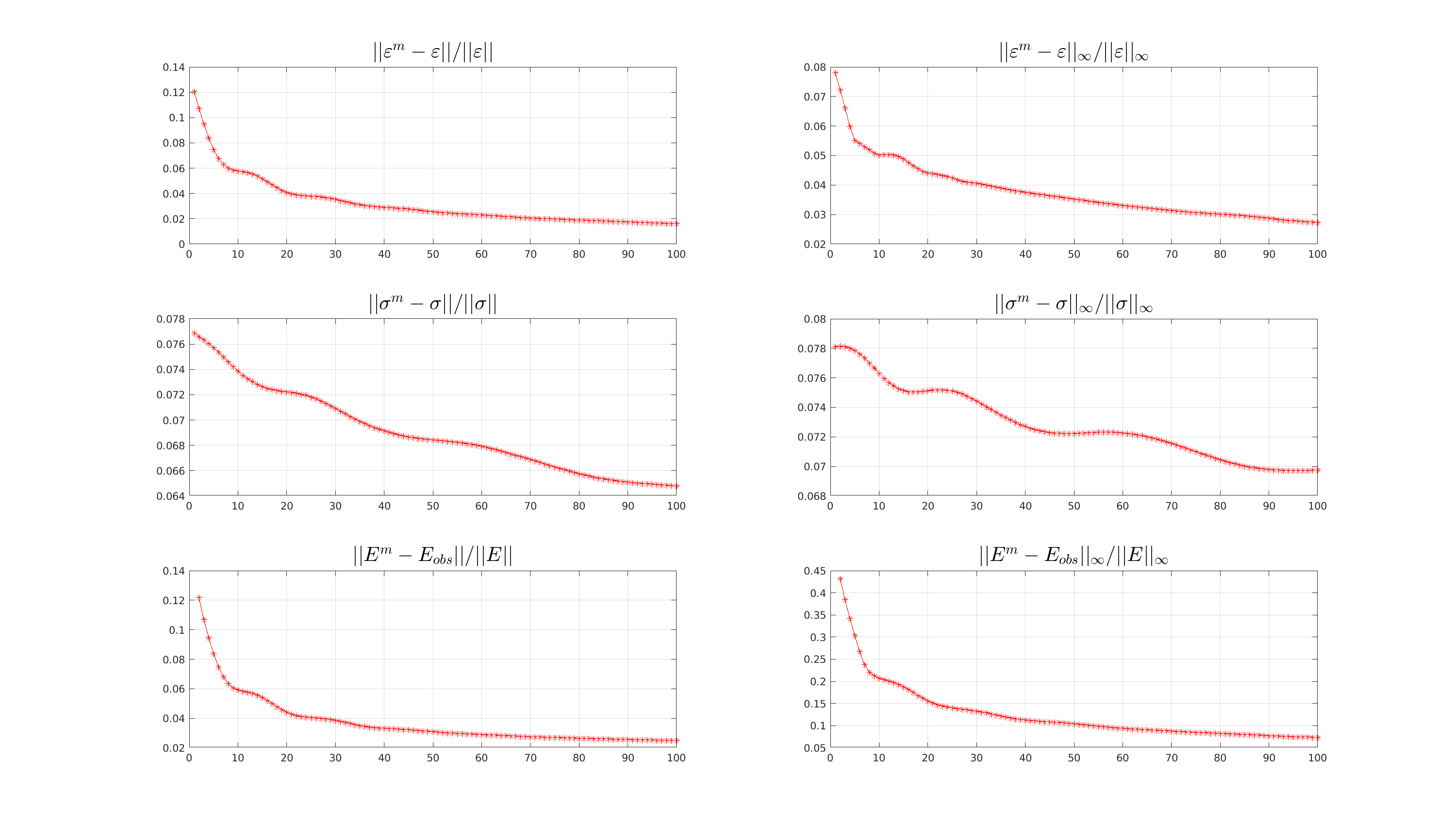}
    \put(-268,147){\small a)}
    \put(-78,147){\small b)}
    \put(-268,72){\small c)}
    \put(-78,72){\small d)}
    \put(-268,-2){\small e)}
    \put(-78,-2){\small f)}
    \caption{\textit{ 2D tests, test 2.
Convergence plots of errors over CGA iterations:
  a), b) errors $e^m_\varepsilon$ concerning $\varepsilon$;
     c), d) errors $e^m_\sigma$ concerning $\sigma$;  e), f) errors $e^m_E$ concerning the term $E- \tilde{E}_{\rm obs} $.
     We note that figures a), c) and e) show the relative $L^2$-errors, and figures  b), d) and f) show the supremum norms.}}
    \label{fig:gooderr}
\end{figure}

\begin{figure}[H]
    \centering
    \includegraphics[trim = 6cm 2.7cm 4.5cm 1.4cm, width=.7\linewidth,clip=]{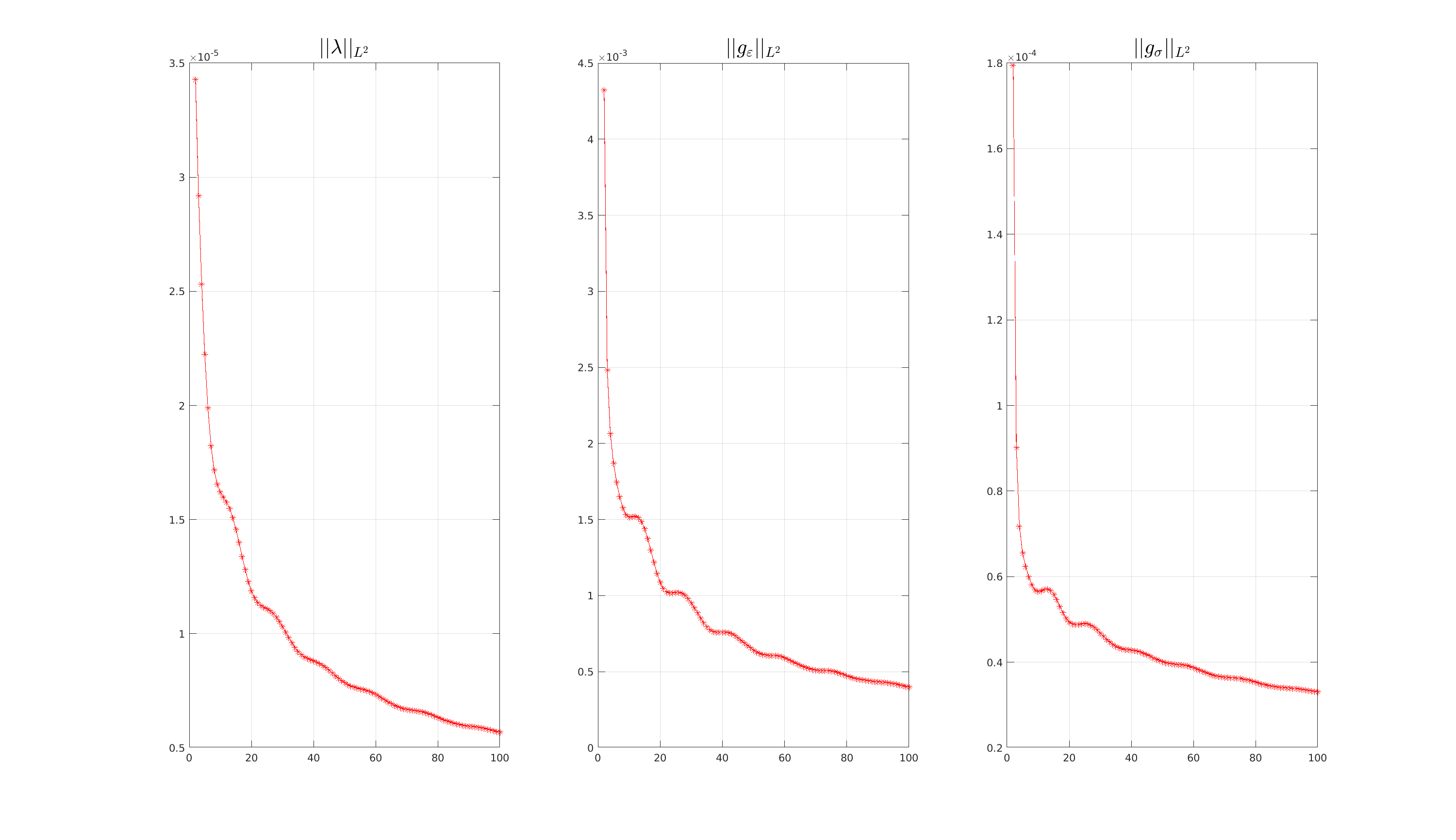}
    \put(-209,-8){a)}
    \put(-123,-8){b)}
    \put(-38,-8){c)}
    \caption{\textit{2D tests, test 2.
Convergence plots of $L^2$ norms over   CGA iterations:~
a) $\| \lambda \|$;~ b)  $\|g_\varepsilon \|$; ~ c) $\| g_\sigma\|$.}}
    \label{fig:goodnorms}
\end{figure}

Finally, in \autoref{fig:goodnorms} we see the same behavior as before, where the norms of $\lambda$, $g_\varepsilon$ and $g_\sigma$ tend towards zero  or to stabilization  what is in accordance with   stopping criterion in the Algorithm 1.

\subsection{3D tests}

In this section we discuss three dimensional numerical results of
 reconstruction of both functions, $\varepsilon$ and $\sigma$,  in the model problem 
\eqref{model1}
using
 conjugate gradient algorithm (CGA) and adaptive conjugate gradient
 algorithm (ACGA) presented in  section \ref{sec:algorithms}.
ACGA is based on the local adaptive finite element mesh refinements
 utilizing a posteriori error indicators for the reconstruction of functions
 $\varepsilon, \sigma$.
  We refer to \cite{BL1,ICEAA2024_LB,BOOK}  for
  more details about this method.
  
For solutions
 of the forward and adjoint problems was used adaptive domain
 decomposition finite element/finite difference method (ADDFE/FDM) developed in
 \cite{BL1, BL2} and implemented in the software package WavES \cite{waves}.
 We refer to \cite{BL1,BL2} for details about numerical implementation
 of the ADDFE/FDM method for time-dependent Maxwell's system in conductive media.

\begin{figure}[tbp]
  \begin{center}
    \begin{tabular}{cc}
      \includegraphics[trim = 1.0cm 0.0cm 1.0cm 0.0cm, scale=0.16, clip=]{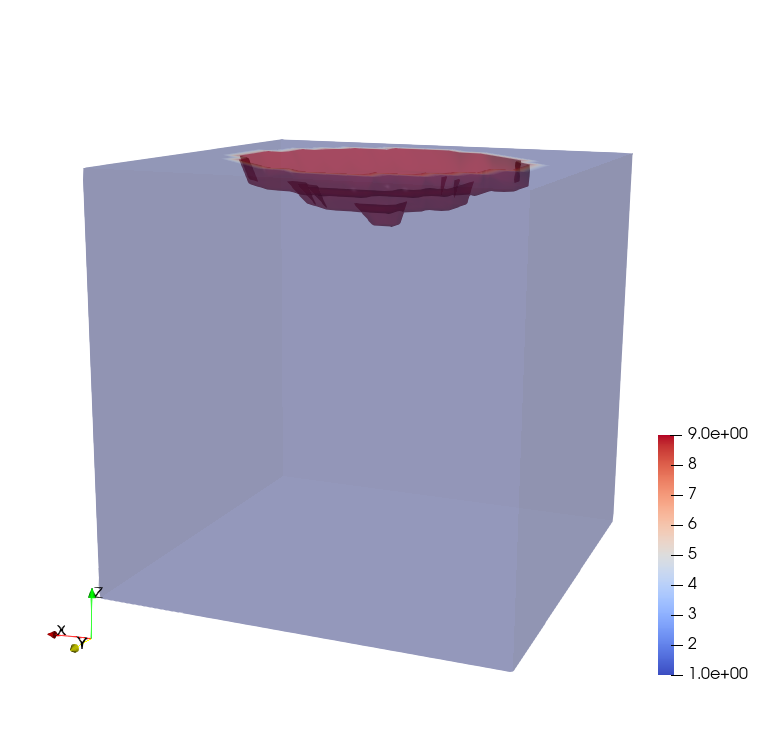} &
      \includegraphics[trim = 1.0cm 0.0cm 1.0cm 0.0cm, scale=0.16, clip=]{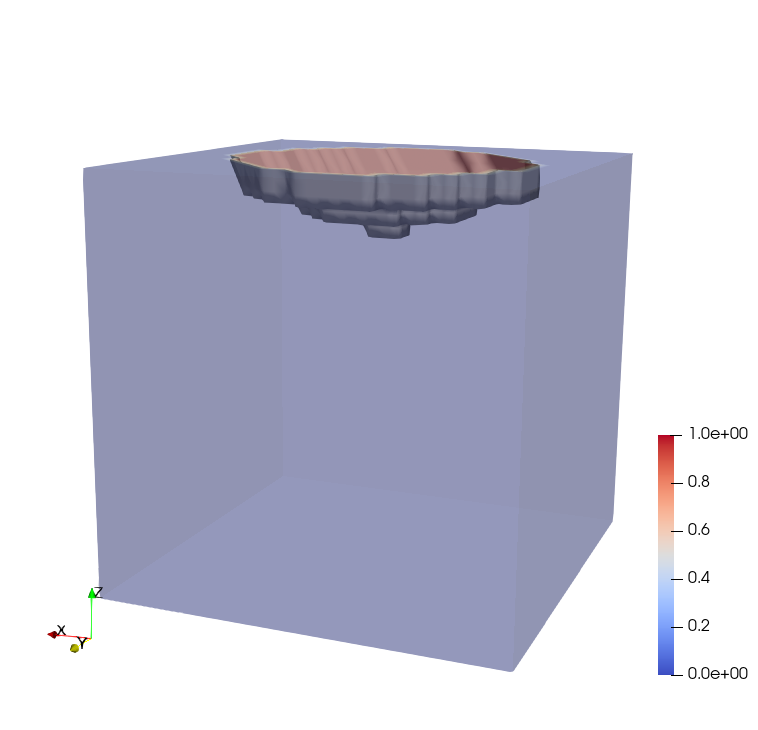}
      \\
      a)  $  \max  \varepsilon_w  \approx 9$        & b)   $ \max \sigma_w \approx 1.2$ \\
      \includegraphics[trim = 8.0cm 4.0cm 4.0cm 4.0cm, scale=0.16, clip=]{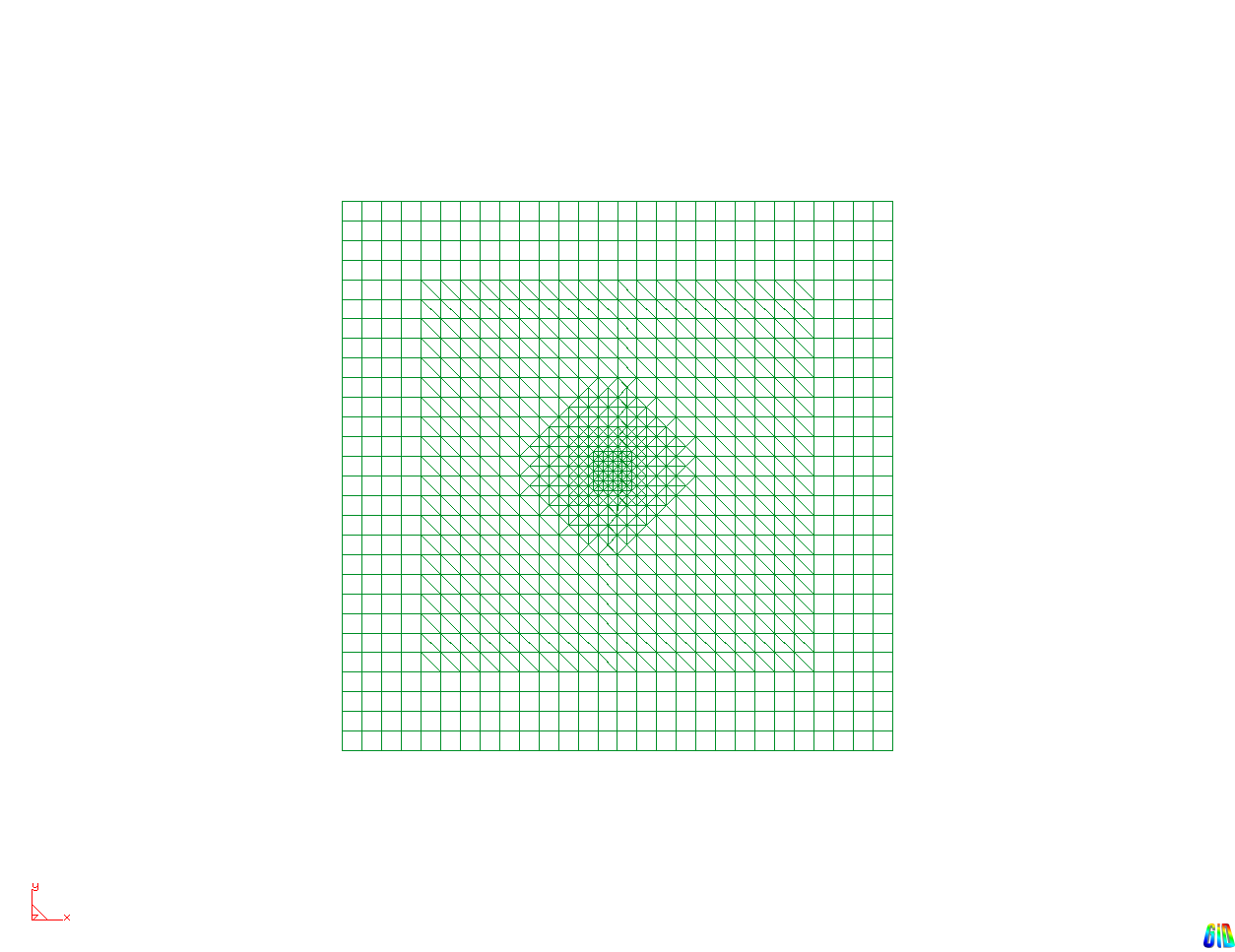}   &
      \includegraphics[trim = 8.0cm 4.0cm 4.0cm 4.0cm, scale=0.16, clip=]{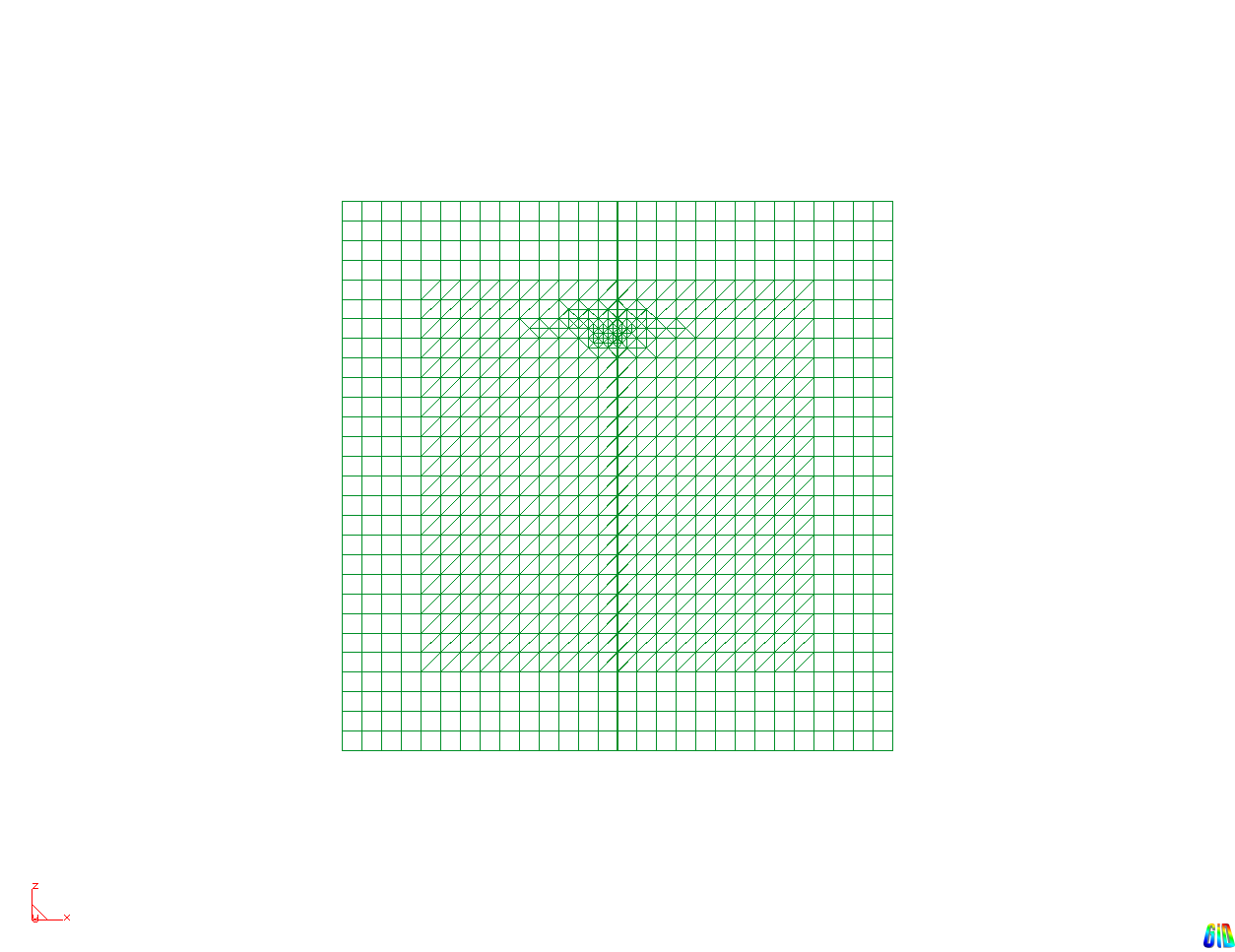}   \\
      c)  $x_1 x_2$-view    & d) $x_3 x_1$- view
    \end{tabular}
  \end{center}
  \caption{\small 3D tests, test 1.
    Dielectric weighted properties  taken in our computations: a) $\varepsilon_w$, b) $\sigma_w$.
    c), d) Projections of the locally refined hybrid FE/FD mesh taken for generation of data.
  }
  \label{fig:numex1}
\end{figure}

\begin{figure}[h!]
  \begin{center}
    \begin{tabular}{cc}
      \includegraphics[trim = 1.0cm 0.0cm 1.0cm 0.0cm, scale=0.16, clip=]{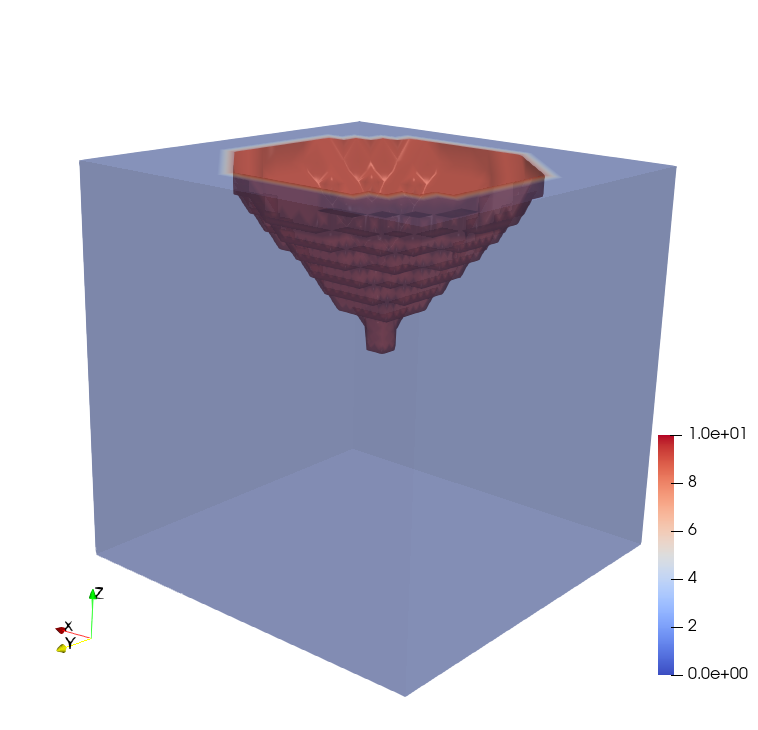} &
      \includegraphics[trim = 1.0cm 0.0cm 1.0cm 0.0cm, scale=0.16, clip=]{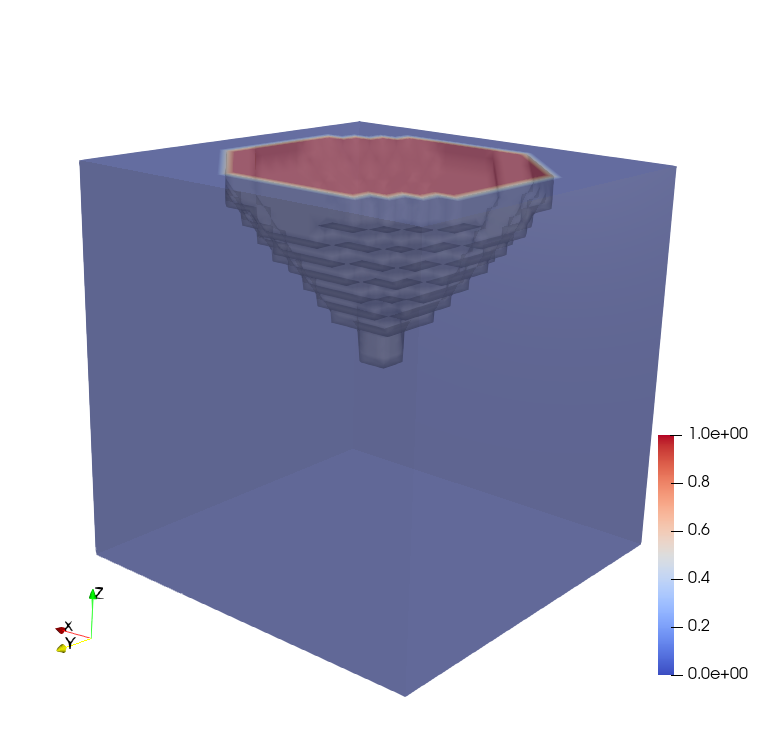}
      \\
      a)  $  \max  \varepsilon_w  \approx 9$        & b)   $ \max \sigma_w \approx 1.2$ \\
      \includegraphics[trim = 8.0cm 4.0cm 4.0cm 4.0cm, scale=0.16, clip=]{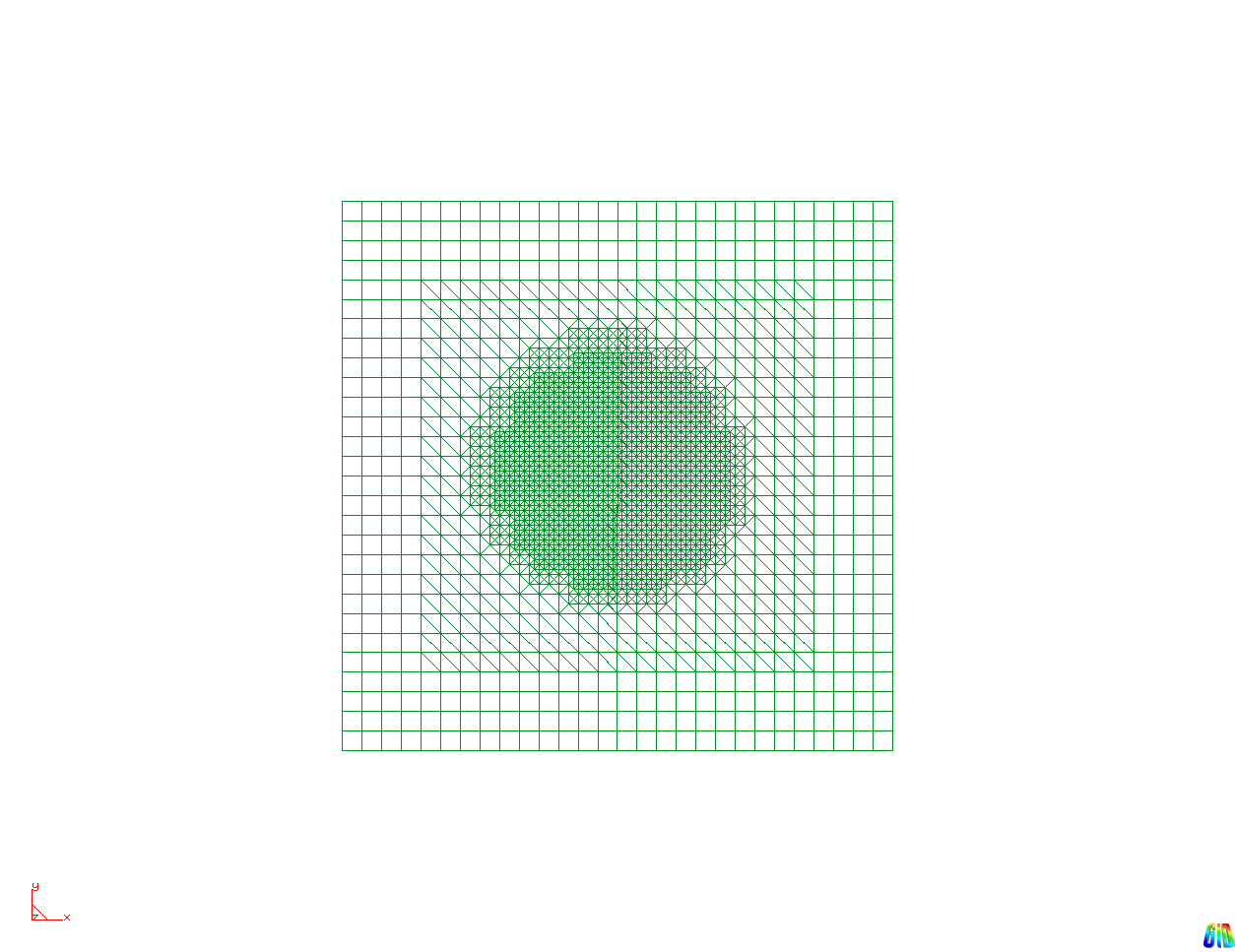}   &
      \includegraphics[trim = 8.0cm 4.0cm 4.0cm 4.0cm, scale=0.16, clip=]{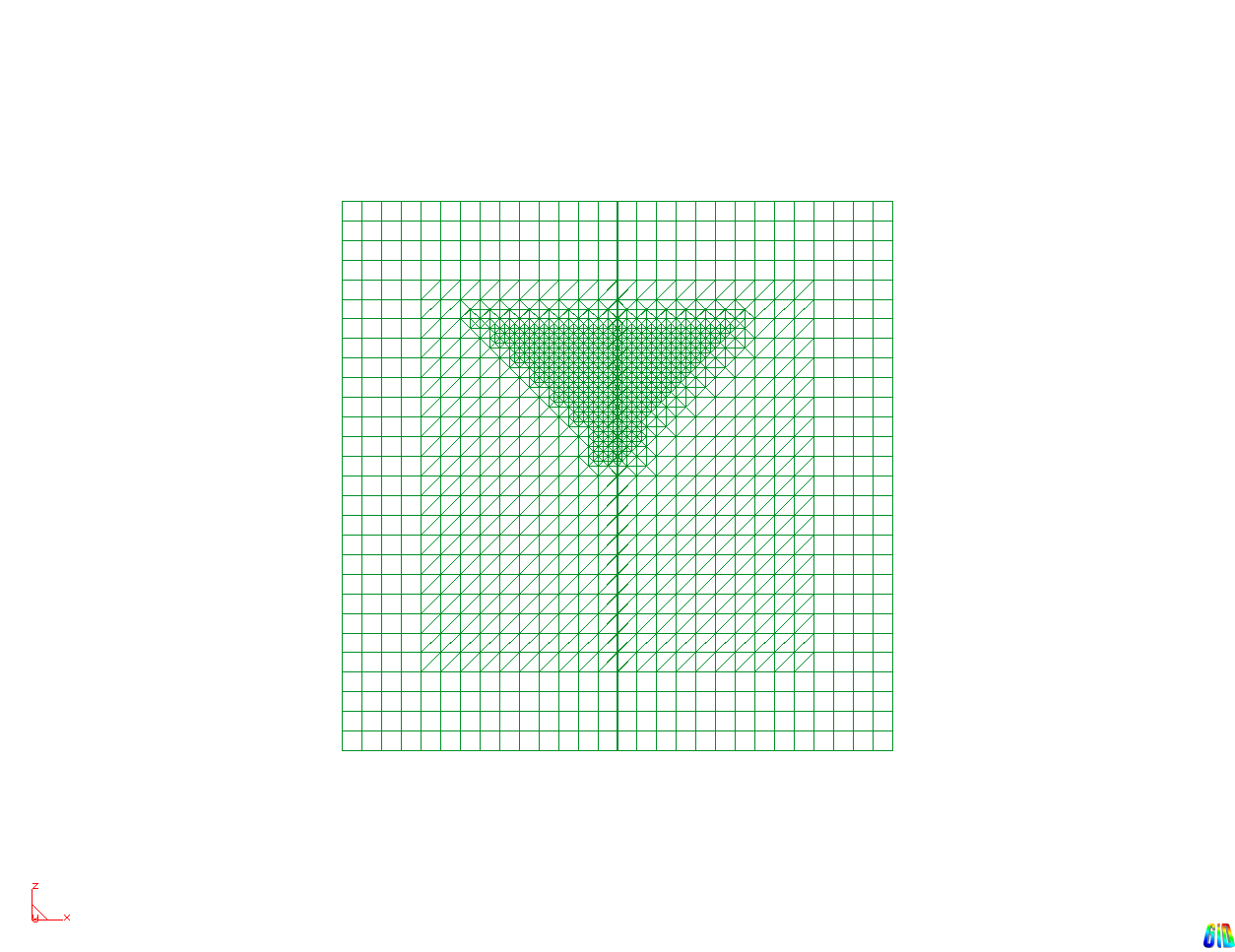}   \\
      c)  $x_1 x_2$-view    & d) $x_3 x_1$- view
    \end{tabular}
  \end{center}
  \caption{\small 3D tests, test 2.
    Dielectric weighted properties  taken in our computations: a) $\varepsilon_w$, b) $\sigma_w$.
    c), d) Projections of the locally refined hybrid FE/FD mesh taken for generation of data.
  }
  \label{fig:numex2}
\end{figure}

In order to apply ADDFE/FDM in our computations
we
split the computational domain $\Omega$ into two domains:  $\Omega_{\rm FEM}$ where we apply the finite element method, and
$ \Omega_{\rm FDM}$ where we use the
 finite difference method,
such that
$\Omega : = \Omega_{\rm FEM} \cup \Omega_{\rm FDM}$ with
$\Omega_{\rm FEM} \subset \Omega$.
We discretize the  finite element domain $\Omega_{\rm FEM}$
 by
tetrahedral elements,  and the  finite difference
domain $\Omega_{\rm FDM}$ - by hexahedral elements - see \cite{BL1,BL2} for further
details of discretization in the ADDFE/FDM.
In our computations  we set
the dimensionless computational domain $\Omega$ as
$ \Omega = \left\{ x= (x_1,x_2, x_3) \in (-2, 12) \times (-2, 12) \times (-2,
 12) \right\}$,
and the computational domain $\Omega_{\rm FEM}$ as
$ \Omega_{\rm FEM} =
 \left\{ x= (x_1,x_2, x_3) \in (0, 10) \times (0, 10) \times (0,
 10) \right\}$.
The domain $\Omega_{\rm FEM}$  represents the 3D models of malign melanoma
 (MM) embedded  within a homogeneous domain
 which has the size $10 \times 10 \times 10$ mm. The geometrical models  of MM growth
  were
developed in
\cite{IEEE2024}  and they have realistic values of  dielectric permittivity and conductivity of  MM  at 6 GHz.
We note that  based on geometrical  models \cite{IEEE2024} in the recent work \cite{ICEAA2025_LB}
were constructed the finite element meshes   for  MM  growth  on the skin, and two of them
  we use in the current tests.
To proceed further,
  we assign values of $\varepsilon$
and $\sigma$ in $\Omega_{\rm FEM}$ accordingly to the test values of the Table
\ref{tab:table1}, and we set $\varepsilon =1$
and $\sigma = 0$ in $\Omega \setminus \Omega_{\rm FEM}$.
Figures  \ref{fig:numex1}, \ref{fig:numex2} show simplified models of MM placed in homogeneous domain   with exact values of the relative dielectric permittivity and conductivity functions given in the Table \ref{tab:table1}.
Dielectric properties shown on this figure
have weighted values of $\varepsilon$ and $\sigma$ which correspond
to the test values presented in the Table \ref{tab:table1}. Our ongoing work is on the reconstructing  non-weighted  real values of  $\varepsilon$ and $\sigma$
 presented in the Table  \ref{tab:table1}, and will be reported later.
To proceed further, we decompose the boundary $\partial \Omega$ of the domain $\Omega$ into
three parts as follows: $\partial
 \Omega =\partial _{1} \Omega \cup \partial _{2} \Omega \cup \partial
 _{3} \Omega$. Here, $\partial _{1} \Omega$ and $\partial _{2} \Omega$
are, respectively, top and bottom sides of $\Omega$, and $\partial
 _{3} \Omega$ is the union of left, right, front and back sides of this
domain. The time-dependent observations are simulated at
the backscattered boundary
$\Gamma_1 :=
 \partial_1 \Omega \times (0,T)$ of $\Omega$. We also define $\Gamma_{1,1} := \partial_1
 \Omega \times (0,t_1]$, $\Gamma_{1,2} := \partial_1 \Omega \times
 (t_1,T)$, and $\Gamma_3
 := \partial_3 \Omega \times (0, T)$.
In our computations we use the following stabilized
model problem:
\begin{equation}\label{model1}
  \begin{split}
    \varepsilon \ptt E & +
    \nabla (\nabla \cdot E) - \Delta E  -
    \nabla  (\nabla \cdot ( \varepsilon  E))
    = -  \sigma \pt E                              ~\mbox{in}~ \Omega_T,                   \\
    E(x,0)                & = 0, ~~~\pt E(x,0) = 0 ~\mbox{in}~ \Omega,                     \\
    \pn E                 & = P(t)                 ~\mbox{on}~ \Gamma_{1,1},               \\
    \pn E                 & = - \pt E              ~\mbox{on}~ \Gamma_{1,2} \cup \Gamma_2, \\
    \pn E                 & = 0                    ~\mbox{on}~ \Gamma_3.                   \\
  \end{split}
\end{equation}
We initialize a plane wave $P(t) = (0,P_2,0)(t)$ only for the one component $E_2$ of the electric field
$E=(E_1, E_2, E_3)$ at $\Gamma_{1,1}$
in (\ref{model1})
in time $t=[0,12.0]$ and define it as
\begin{equation}\label{f}
  \begin{split}
    P_2(t) =\left\{
    \begin{array}{ll}
      \sin \left( \omega t \right) ,\qquad & \text{ if }t\in \left( 0,\frac{2\pi }{\omega }
      \right) ,                                                                             \\
      0,                                   & \text{ if } t>\frac{2\pi }{\omega }.
    \end{array}
    \right.
  \end{split}
\end{equation}

\begin{table}[ht]
  \centering
  \caption{\textit{Tissue types and corresponding realistic
      values of $\varepsilon$ and $\sigma$ (S/m) at 6 Ghz for skin with malign melanoma  (MM)
      used in our numerical experiments. }}
  \begin{tabular}{|p{2.3cm}|p{0.8cm}|p{0.9cm}|p{1cm}|p{1cm}|p{1cm}|}
    \hline
\multicolumn{1}{|c|}{}  & \multicolumn{2}{c|}{Real values}  & \multicolumn{2}{c|}{Test values}  & \multicolumn{1}{c|}{Depth}
\\
    \hline
    Tissue type      & $\varepsilon$                    & $\sigma$       & $\varepsilon_w $ & $\sigma_w$ &  \\
                     &                                  & (S/m)                            &                                & (S/m)                & (mm)   \\
    \hline
    Immersion medium & 32                               & 4                                & 1                          & 0                    & 2      \\
    \hline
    Epidermis        & 35                               & 4                                & 1                          & 0                    & 1      \\
    \hline
    Dermis           & 40                               & 9                                & 1                          & 0                    & 3.5    \\
    \hline
    Fat              & 9                                & 1                                & 1                          & 0                    & 5.5    \\
    \hline
    Tumor stage 1    & 45                               & 6                                & 8                         & 1.2              & $< 1$       \\
    \hline
    Tumor stage 2    & 50                               & 6                                & 9                         & 1.2              & $ > 1$       \\
    \hline
    Tumor stage 3    & 60                               & 6                                & -                              & -                    & $> 1$    \\
    \hline
  \end{tabular}
  \label{tab:table1}
\end{table}

\begin{table}[h!]
    \centering
    \caption{\textit{Geometrical properties of the MM  models are taken from \cite{IEEE2024}.}}
    \begin{tabular}{|c c c c|}
    \hline 
        Month & Shape & Diameter (mm) & Depth (mm) \\
    \hline
    \hline
        6 & Cone & 6.28 & 1.08 \\
        22 & Cone & 6.96 & 4.06 \\
    \hline
    \end{tabular} 
    \label{tab:table2}
\end{table}

The goal of our numerical tests is to reconstruct weighted dielectric
permittivity $\varepsilon_w$ and conductivity $\sigma_w$ functions
 for MM   at month 6 and at month 22 with  dielectric  and geometrical 
 properties presented in  the Table \ref{tab:table1} and in the Table   \ref{tab:table2}, correspondingly,
 using time-dependent
backscattered electric field $E =(E_1,E_2,E_3)$ at $\Gamma_{1}$.

For generation of data 
 we compute first  components of the electric field $E =(E_1,E_2,E_3)$ at $\Gamma_{1}$ using known values of
  $\varepsilon_w$  and $\sigma_w$  given as the test values in the Table
 \ref{tab:table1}, and use them
in the model problem \eqref{model1}.
After we add the
   normally distributed Gaussian noise $\delta = 10\%$  with the mean $\mu=0$
  to the simulated electric field  $E =(E_1,E_2,E_3)$  at the backscattered boundary  $\Gamma_1$.
  Note that  in order to avoid variational crimes the data is generated on the five time locally refined
   mesh which does not coincide  with the finite element mesh  used for computations in the ACGA.
 Figure \ref{fig:numex1} - c), d) and Figure \ref{fig:numex2} - c), d)
show projections of locally refined finite element meshes used for generation of
backscattered data in Test 1 and Test 2, correspondingly.

We have chosen
 numbers $ \widetilde{\beta}_\varepsilon = 0.8 , \widetilde{\beta}_\sigma  = 0.8$
  in the mesh refinement criterion \eqref{meshref}
 in both tests.
  Relative errors presented in the Tables  \ref{tab:table1ACGA},  \ref{tab:table2ACGA}  are computed as
  follows:
\begin{equation}\label{relerrors}
 e_\varepsilon = \frac{\max_{\Omega_{\rm FEM} } |\varepsilon_w -  {{\varepsilon_w}_h}_k | }{ {\rm nno } \cdot \max_{\Omega_{\rm FEM}} |\varepsilon_w|}, ~~~
 e_\sigma =  \frac{\max_{\Omega_{\rm FEM} } |\sigma_w - {{\sigma_w}_h}_k  | }{  {\rm nno} \cdot \max_{\Omega_{\rm FEM}} |\sigma_w|},
\end{equation}
where $ {\rm nno} $  is the number of nodes in the  finite element mesh.

\begin{figure}[h!]
  \begin{center}
    \begin{tabular}{ccc}
      \includegraphics[trim = 3.0cm 0.0cm 1.5cm 4.0cm, scale=0.13, clip=]{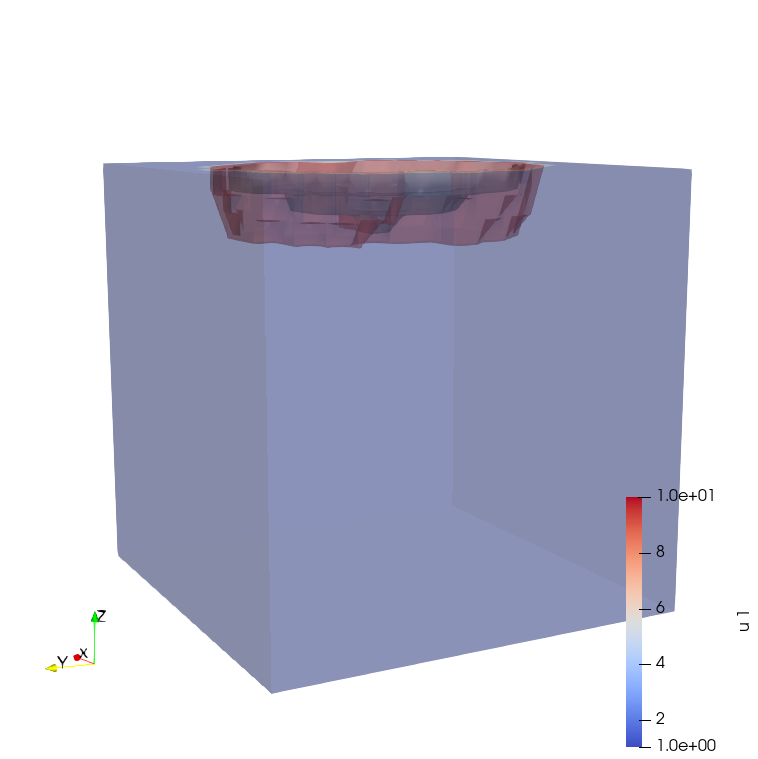} &
 \includegraphics[trim = 3.0cm 0.0cm 1.5cm 4.0cm, scale=0.13, clip=]{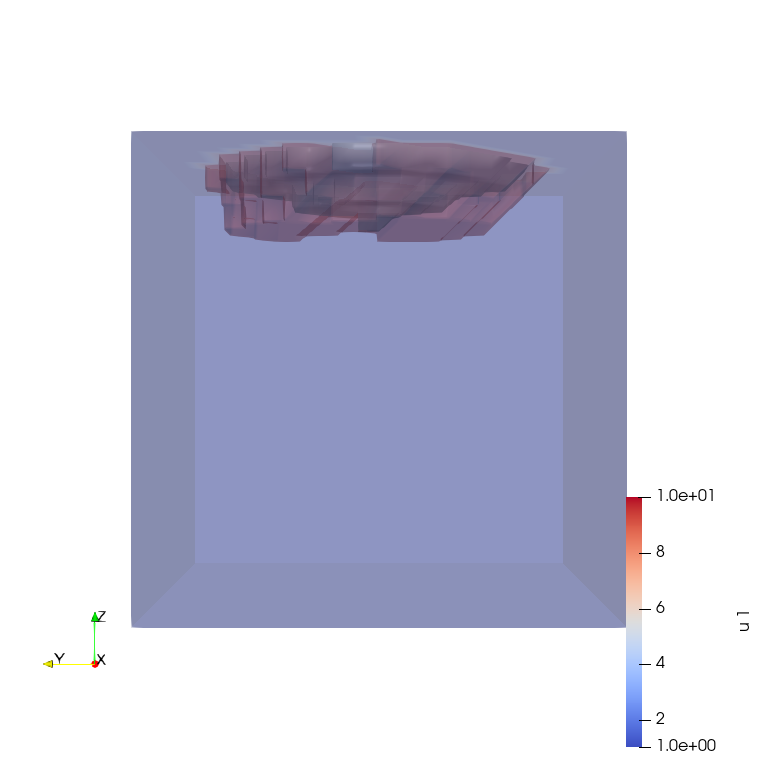} &
  \includegraphics[trim = 3.0cm 0.0cm 1.5cm 4.0cm, scale=0.13, clip=]{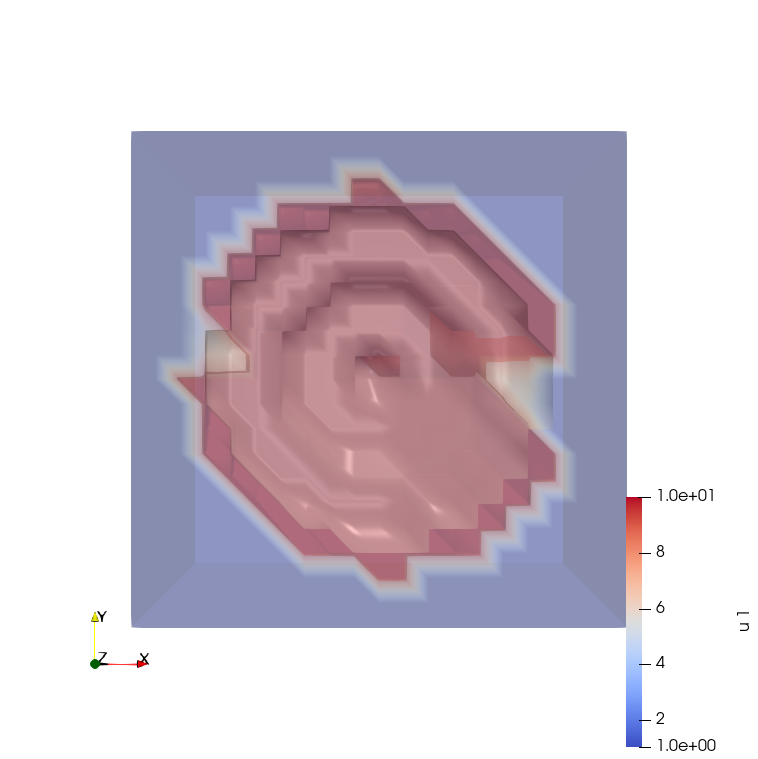} \\
    a)  prospect view   & b) $x_2 x_3$ view  &  c)  $x_1 x_2$  view  \\
      \includegraphics[trim = 3.0cm 0.0cm 1.5cm 4.0cm, scale=0.13, clip=]{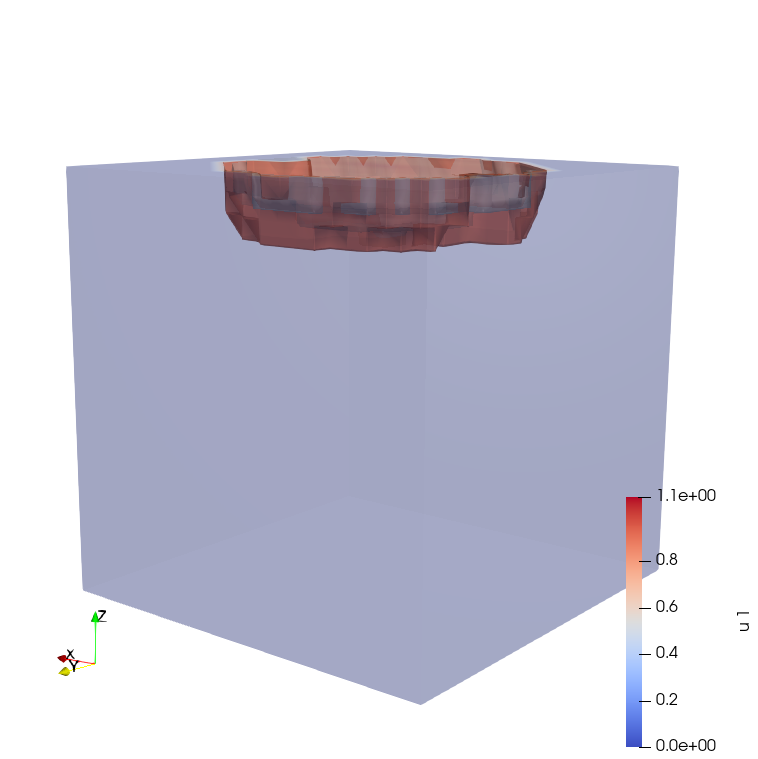}  &
   \includegraphics[trim = 3.0cm 0.0cm 1.5cm 4.0cm, scale=0.13, clip=]{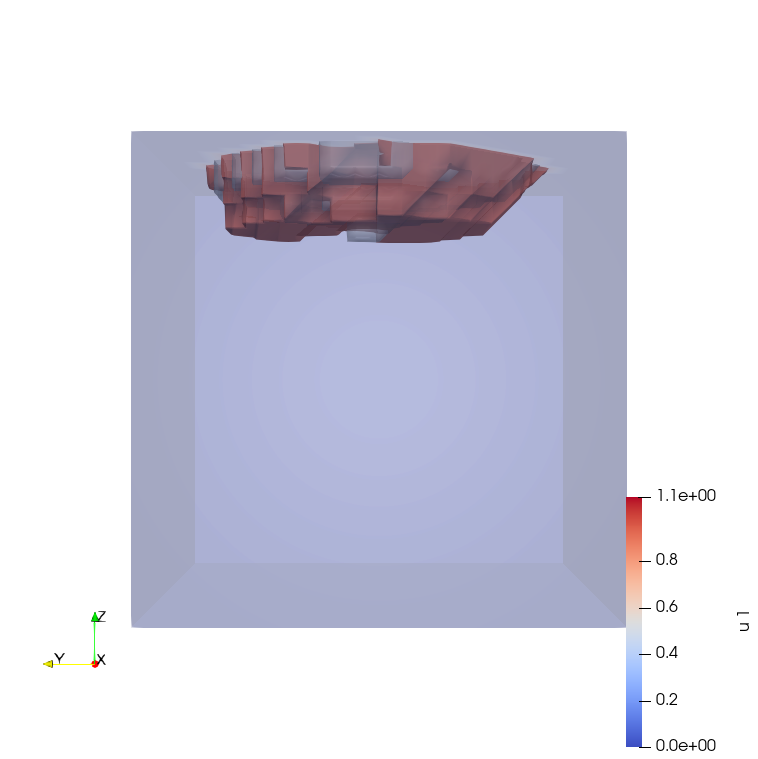}  &     
  \includegraphics[trim = 3.0cm 0.0cm 1.5cm 4.0cm, scale=0.13, clip=]{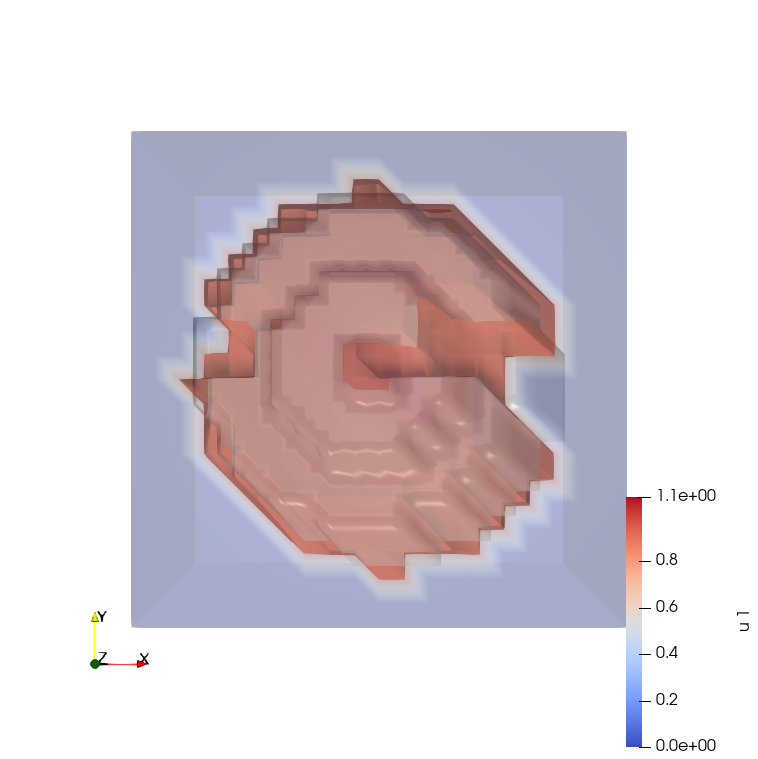}
  \\
      d)  prospect view    & e)   $x_2 x_3$ view  & f)  $x_1 x_2$  view   \\
   \includegraphics[trim = 7.0cm 0.0cm 7cm 4.0cm, scale=0.13, clip=]{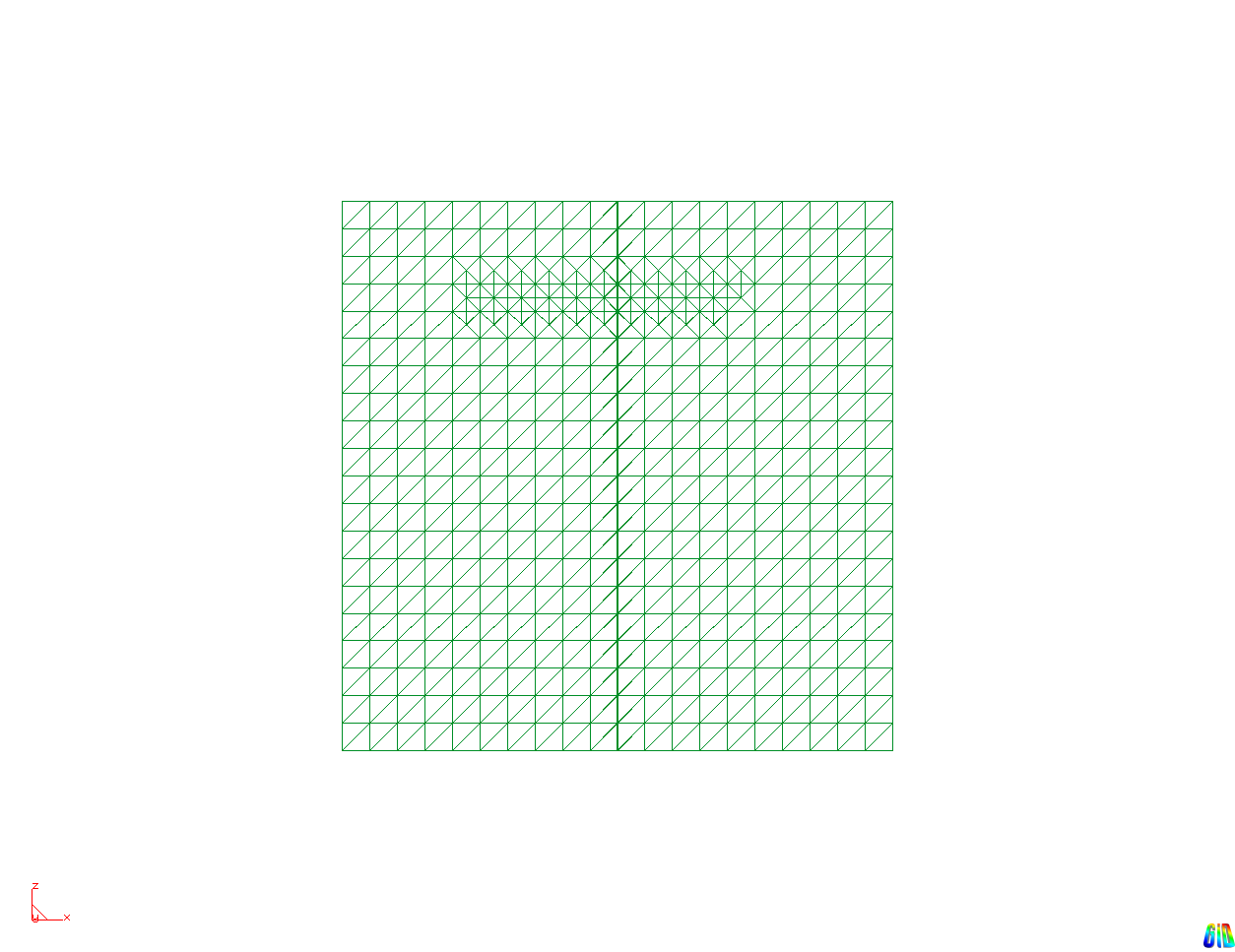}  &    
  \includegraphics[trim = 7.0cm 0.0cm 7cm 4.0cm, scale=0.13, clip=]{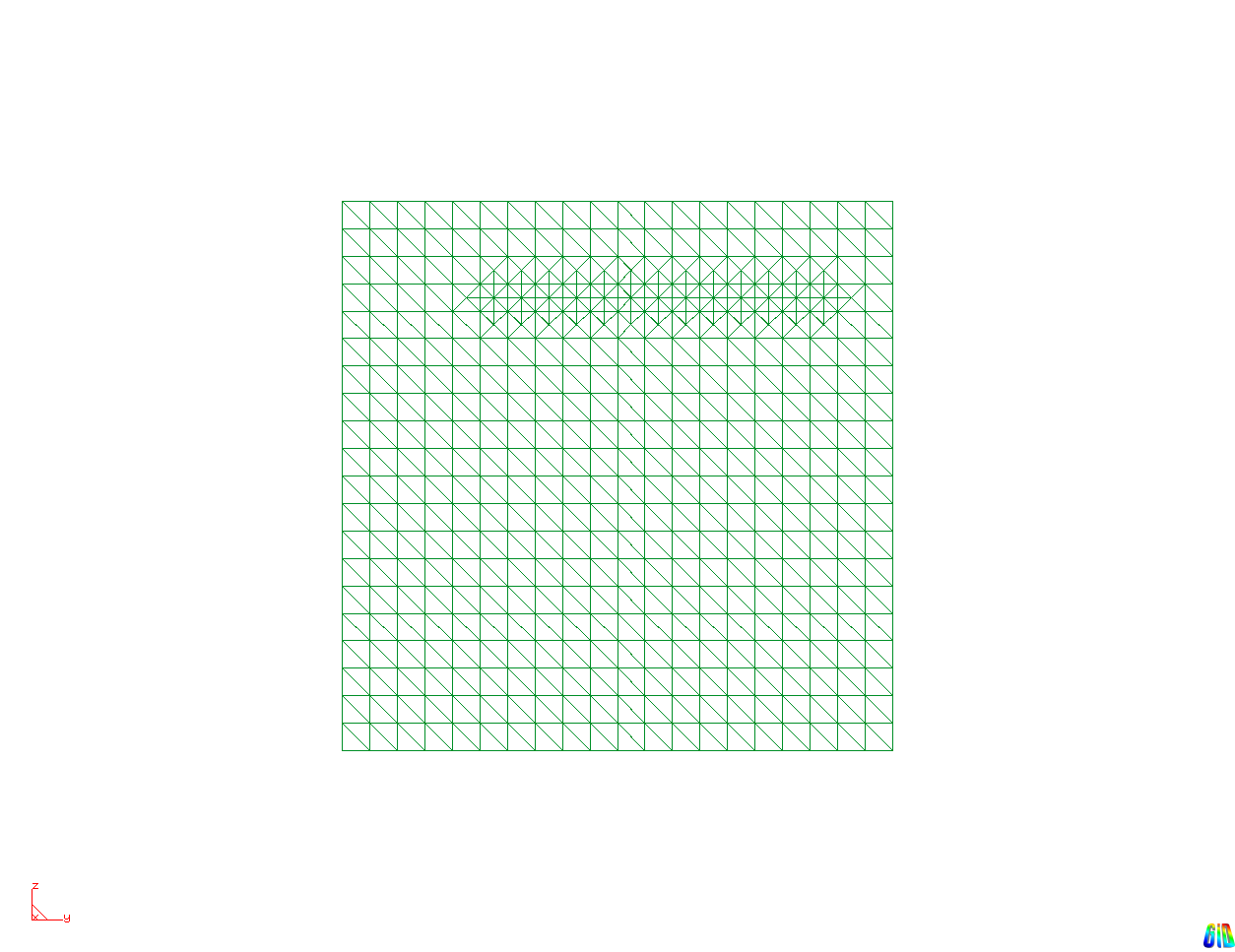}
  &
  \includegraphics[trim = 7.0cm 0.0cm 7cm 4.0cm, scale=0.13, clip=]{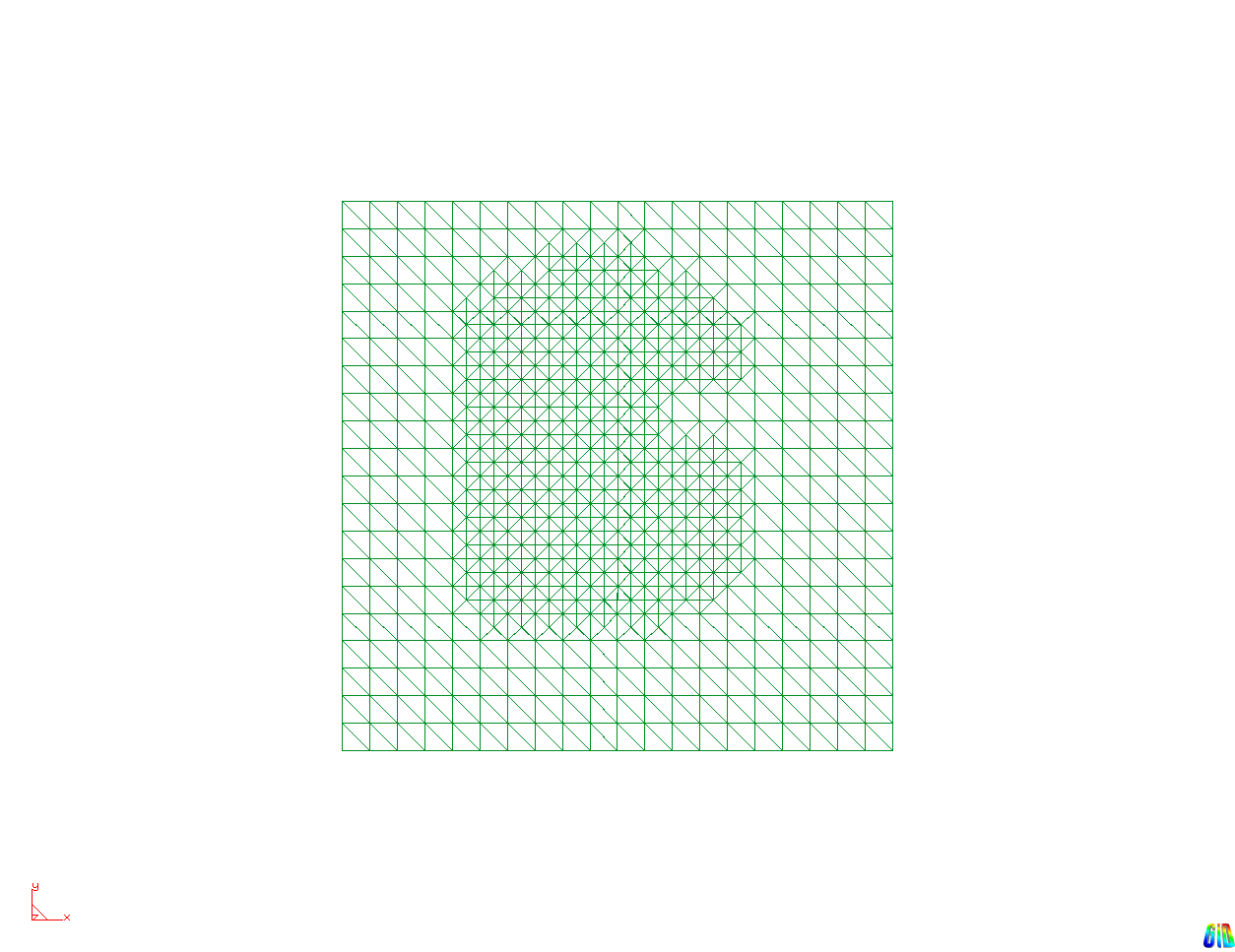}  
\\
    g) $x_1 x_3$  view   & h)  $x_2 x_3$  view   & i)   $x_1 x_2$  view    \\
    \end{tabular}
  \end{center}
  \caption{\small 3D tests, test 1. Performance of ACGA on the four times refined  mesh ${K_h}_4$. Isosurfaces of reconstruction  for 
$\max {\varepsilon_w}_h  \approx 9$     are presented in red color, and   for $\max {\sigma_w}_h \approx 0.81$  are shown also in red color.
a) - c) The weighted reconstruction of ${\varepsilon_w}_h$ (outlined in red color).
  d) - f) The weighted reconstruction of ${\sigma_w}_h$ (outlined in red color). g)-i) Projections of the adaptively refined mesh ${K_h}_4$.
    The noise level in the data for electric field is $\delta= 10\%$.  See Table  \ref{tab:table2ACGA} for obtained
    contrasts for $\max_{\Omega_{\rm FEM}} {\varepsilon_w}_h$ and $\max_{\Omega_{\rm FEM}} {\sigma_w}_h$
     on meshes ${K_h}_k, k=0,...,4$.
    For comparison we also present exact isosurfaces  for $\varepsilon_w$ and $\sigma_w$
with values corresponding to the reconstructed ones, outlined in opaque.
  }
  \label{fig:ACGATest1ref4}
\end{figure}

\begin{table}[ht]
  \centering
  \caption{\textit{ 3D tests, test 1.  Computational results of the reconstructions
   ${{\varepsilon_w}_h}_k := \max_{\Omega_{\rm FEM}} {{\varepsilon_w}_h}_k$,  $ {{\sigma_w}_h}_k :=  \max_{\Omega_{\rm FEM}} {{\sigma_w}_h}_k$
on a coarse and on adaptively
refined meshes ${K_h}_k, k=0,1,2,3,4$, together with relative errors  for obtained reconstructions.
 The noise level in the data for electric field is $\delta= 10\%$.
Here,
 $\max_{\Omega_{\rm FEM}} {{\varepsilon_w}_h}_k$,  $\max_{\Omega_{\rm FEM}} {{\sigma_w}_h}_k$
 denote maximum of the computed functions ${\varepsilon_w}_h$,  ${\sigma_w}_h$, respectively,
 on $k$ times refined mesh ${K_h}_k$, in the domain  $\Omega_{\rm FEM}$,  and $M_k$ denotes
 the final number of iterations in the ACGA on $k$  times refined
mesh ${K_h}_k$ for reconstructed functions  $  {\varepsilon_w}_h$, ${\sigma_w}_h$, $k = 0,1,2,3,4.$ }}
\begin{tabular}{|l|l|l|l|l|l|l|l|l|} \hline
Mesh & nno & $ \| g_\varepsilon \|$/nno  & $ \| g_\sigma \|$/nno &   $ {{\varepsilon_w}_h}_k $ & $ e_\varepsilon $ & ${{\sigma_w}_h}_k  $ & $  e_\sigma $ &   $M_k$    \\
\hline
${K_h}_0$ & 9261   & 7.8393e-06  & 9.2647e-06  & 6   &  3.5990e-05  &  0.25 & 8.5488e-05  &   2   \\
${K_h}_1$ & 9261   & 7.7529e-06  & 1.1943e-05 & 9.5 &  6.0037e-06  &  0.6  & 5.3990e-05  & 1 \\
${K_h}_2$ & 9261   & 7.7961e-06  & 1.1964e-05  & 10  &  1.1997e-05  &  0.64 & 5.0394e-05 & 1\\
${K_h}_3$ & 9261   & 7.8069e-06  &  1.0107e-05 & 10  &  1.1997e-05   &  0.95 & 2.2492e-05 & 1\\
${K_h}_4$ & 10116  & 6.9790e-06  & 6.3464e-06  & 10  &  1.0983e-05  &  1.0   &  1.6479e-05   & 1\\  
\hline
\end{tabular}
  \label{tab:table1ACGA}
  \end{table}


\begin{table}[ht]
  \centering
  \caption{\textit{ 3D tests, test 2.  Computational results of the reconstructions
  $ {{\varepsilon_w}_h}_k := \max_{\Omega_{\rm FEM}} {{\varepsilon_w}_h}_k$,  $ {{\sigma_w}_h}_k:=  \max_{\Omega_{\rm FEM}} {{\sigma_w}_h}_k$
on a coarse and on adaptively
refined meshes ${K_h}_k, k=0,1,2,3,4$, together with relative errors  for obtained reconstructions.
 The noise level in the data for electric field is $\delta= 10\%$.
Here,
 $\max_{\Omega_{\rm FEM}} {{\varepsilon_w}_h}_k$,  $\max_{\Omega_{\rm FEM}} {{\sigma_w}_h}_k$
 denote maximum of the computed functions ${\varepsilon_w}_h$,  ${\sigma_w}_h$, respectively,
 on $k$ times refined mesh ${K_h}_k$, in the domain  $\Omega_{\rm FEM}$,  and $M_k$ denotes
 the final number of iterations in the ACGA on $k$  times refined
mesh ${K_h}_k$ for reconstructed functions  $  {\varepsilon_w}_h$, ${\sigma_w}_h,$ $k = 0,1,2,3,4.$}}
\begin{tabular}{|l|l|l|l|l|l|l|l|l|} \hline
Mesh & nno &  $\| g_\varepsilon \|$/nno & $ \| g_\sigma \|$/nno &   $  {{\varepsilon_w}_h}_k  $ &  $ e_\varepsilon $ & $ {{\sigma_w}_h}_k  $ & $  e_\sigma $ &   $M_k$    \\
\hline

${K_h}_0$ & 9261   & 7.8393e-06  & 9.2647e-06  & 10   &  3.5990e-05  &  0.6   & 8.5488e-05  &   4   \\
${K_h}_1$ & 11240  & 6.3879e-06  & 9.8399e-06 & 10    &  4.9466e-06  &  0.57   & 4.4484e-05  & 1 \\
${K_h}_2$ & 18642  & 3.8730e-06  & 5.9436e-06  & 10   &  5.9597e-06  &  0.62   & 2.5035e-05  & 1\\
${K_h}_3$ & 50271  &  1.4382e-06  &  1.8619e-06  & 10 &  2.2100e-06   &  0.68   &  4.1435e-06 & 1\\
${K_h}_4$ & 177795 &  3.9709e-07 & 3.6109e-07  & 10   &  6.2488e-07  &  0.77    &  9.3760e-07   & 1\\  

\hline
\end{tabular}
  \label{tab:table2ACGA}
  \end{table}

\begin{figure}[h]
  \begin{center}
    \begin{tabular}{ccc}
      \includegraphics[trim = 3.0cm 0.0cm 1.5cm 4.0cm, scale=0.15, clip=]{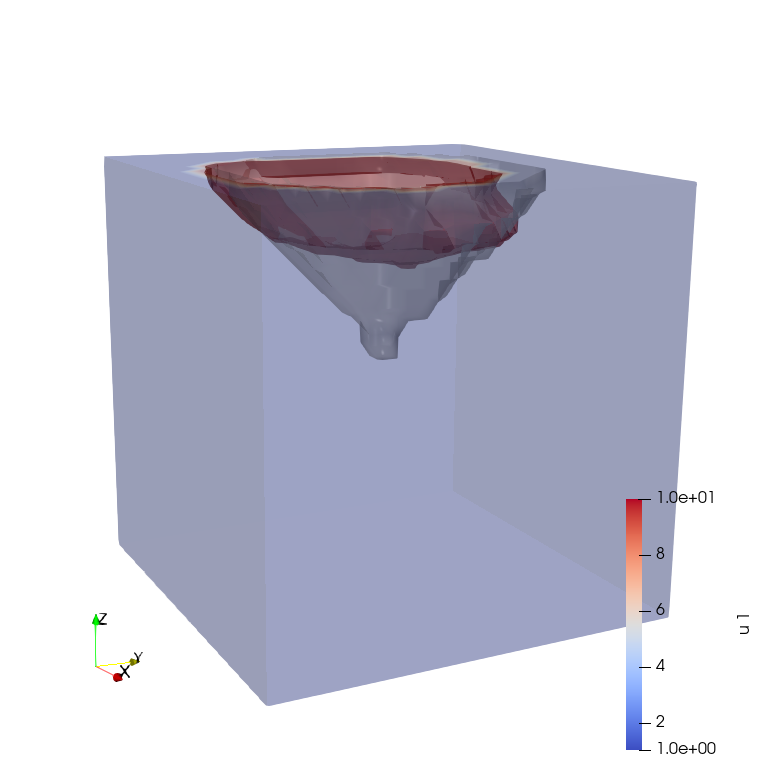} &
 \includegraphics[trim = 3.0cm 0.0cm 1.5cm 4.0cm, scale=0.15, clip=]{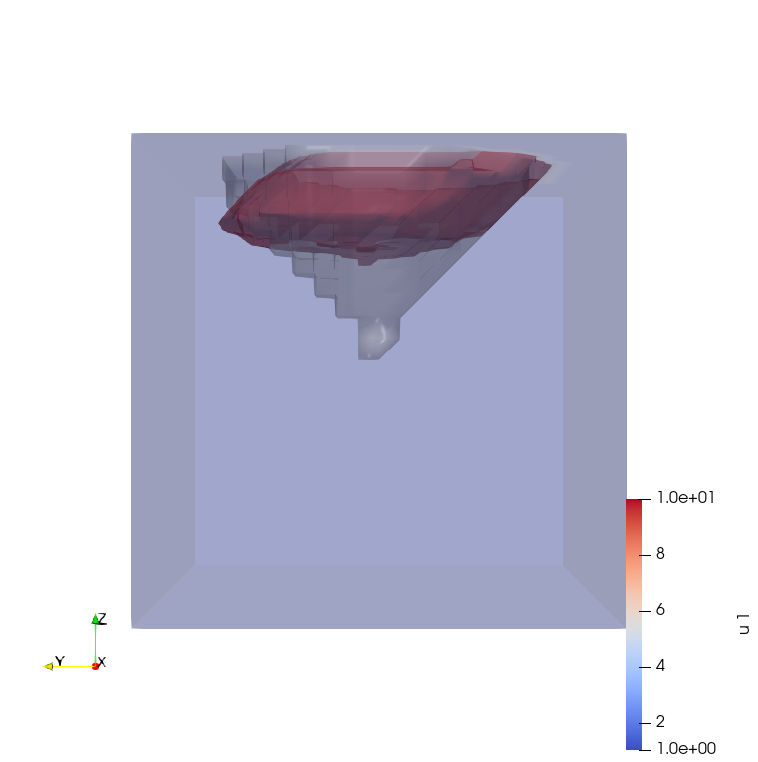} &
  \includegraphics[trim = 3.0cm 0.0cm 1.5cm 4.0cm, scale=0.15, clip=]{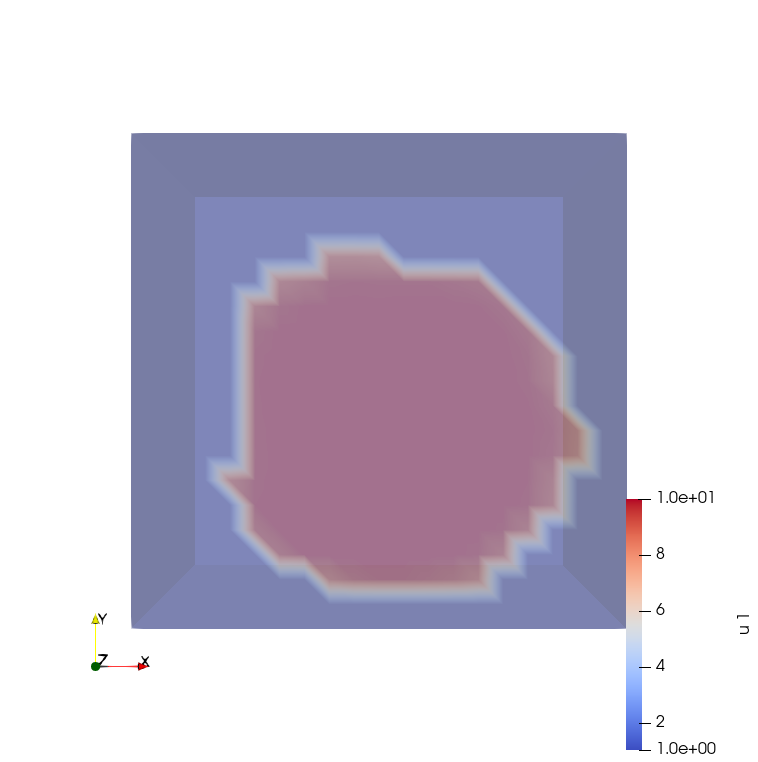} \\
    a)  prospect view   & b) $x_2 x_3$ view  &  c)  $x_1 x_2$  view  \\
      \includegraphics[trim = 3.0cm 0.0cm 1.5cm 4.0cm, scale=0.15, clip=]{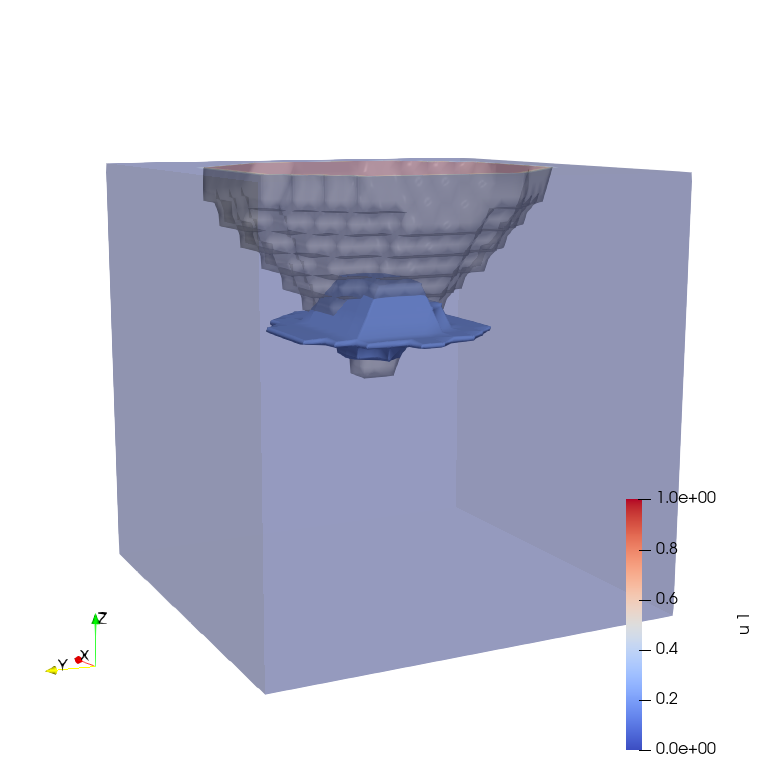}  &
   \includegraphics[trim = 3.0cm 0.0cm 1.5cm 4.0cm, scale=0.15, clip=]{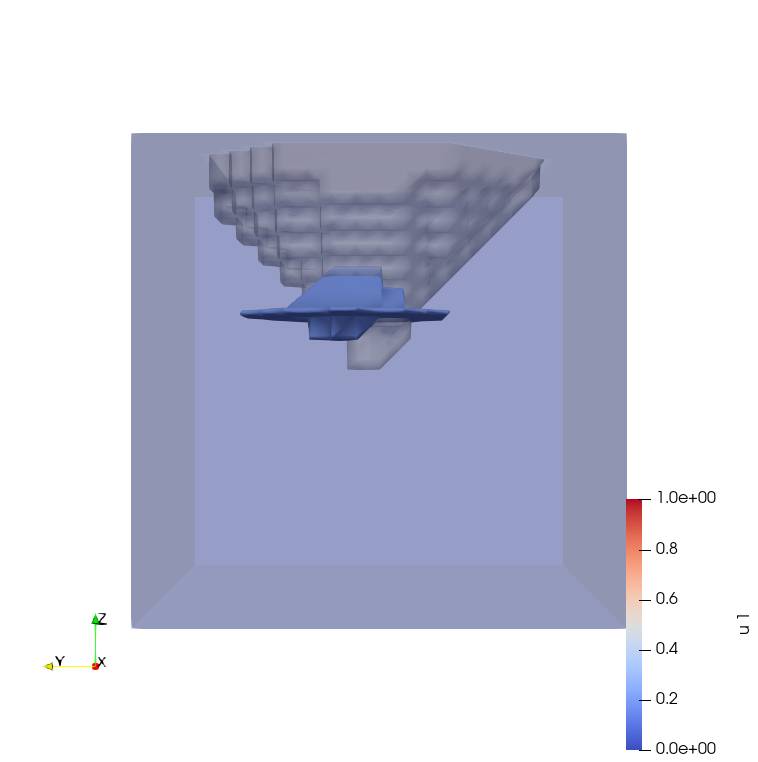}  &     
  \includegraphics[trim = 3.0cm 0.0cm 1.5cm 4.0cm, scale=0.15, clip=]{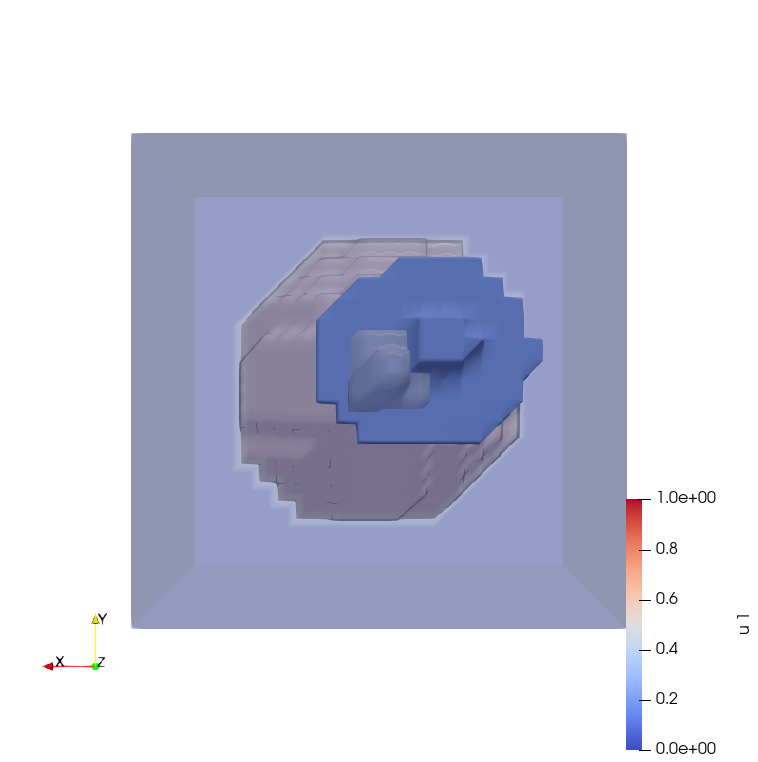}
  \\
      d)  prospect view    & e)   $x_2 x_3$ view  & f)  $x_1 x_2$  view   \\
    \end{tabular}
  \end{center}
  \caption{\small  3D tests, test 2. Performance of CGA on the coarse mesh ${K_h}_0$. Isosurfaces of reconstruction  for 
$\max {\varepsilon_w}_h  \approx 9$     are presented in red color, and
for $\max {\sigma_w}_h \approx 0.6$  are shown in blue color.
a) - c) The weighted reconstruction of ${\varepsilon_w}_h$ (outlined in red color).
  d) - f) The weighted reconstruction of ${\sigma_w}_h$ (outlined in blue color).
    The noise level in the data for electric field is $\delta= 10\%$.  See Table \ref{tab:table2ACGA} for obtained
    contrasts for $\max_{\Omega_{\rm FEM}} {\varepsilon_w}_h$ and $\max_{\Omega_{\rm FEM}} {\sigma_w}_h$.
    For comparison we also present exact isosurfaces  for $\varepsilon_w$ and $\sigma_w$
with values corresponding to the reconstructed ones, outlined in opaque.
  }
  \label{fig:CGA}
\end{figure}


\begin{figure}[h!]
  \begin{center}
    \begin{tabular}{ccc}
      \includegraphics[trim = 3.0cm 0.0cm 1.5cm 4.0cm, scale=0.13, clip=]{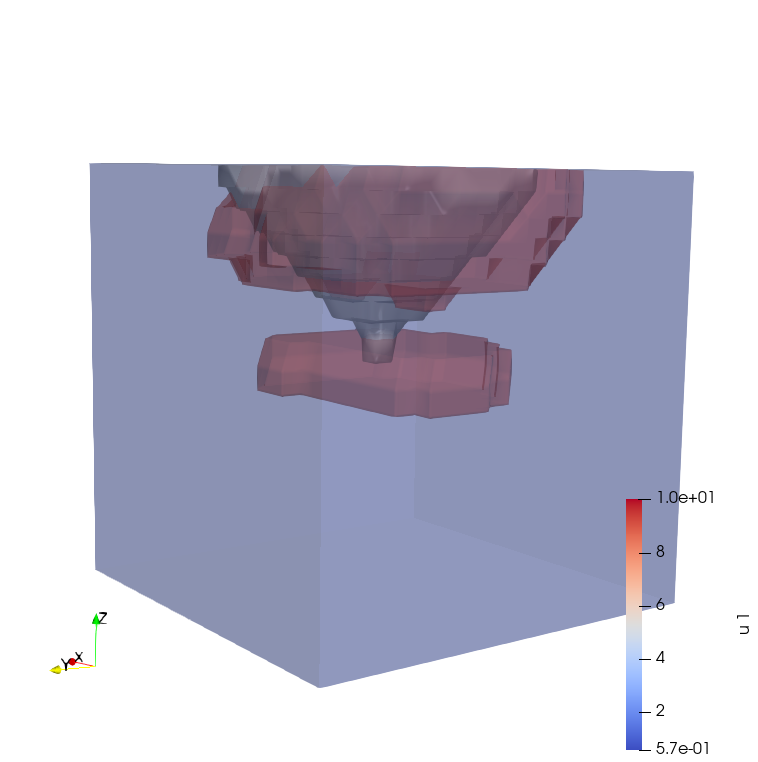} &
 \includegraphics[trim = 3.0cm 0.0cm 1.5cm 4.0cm, scale=0.13, clip=]{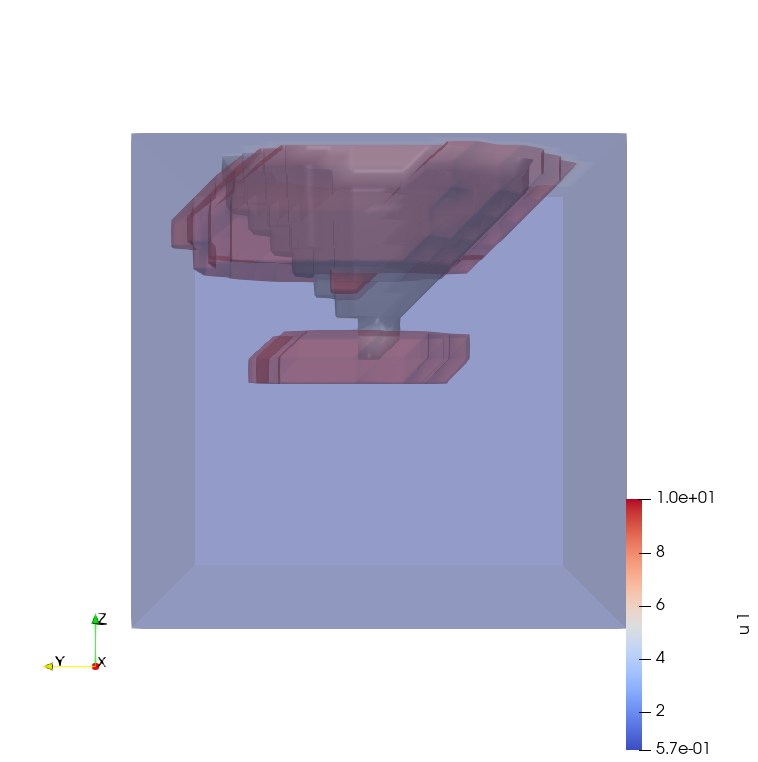} &
  \includegraphics[trim = 3.0cm 0.0cm 1.5cm 4.0cm, scale=0.13, clip=]{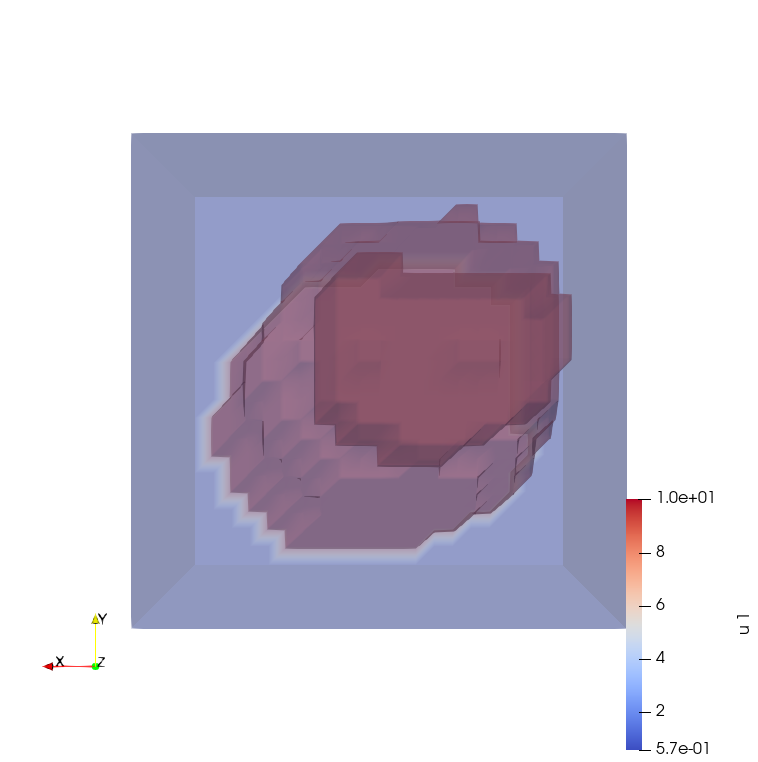} \\
    a)  prospect view   & b) $x_2 x_3$ view  &  c)  $x_1 x_2$  view  \\
      \includegraphics[trim = 3.0cm 0.0cm 1.5cm 4.0cm, scale=0.13, clip=]{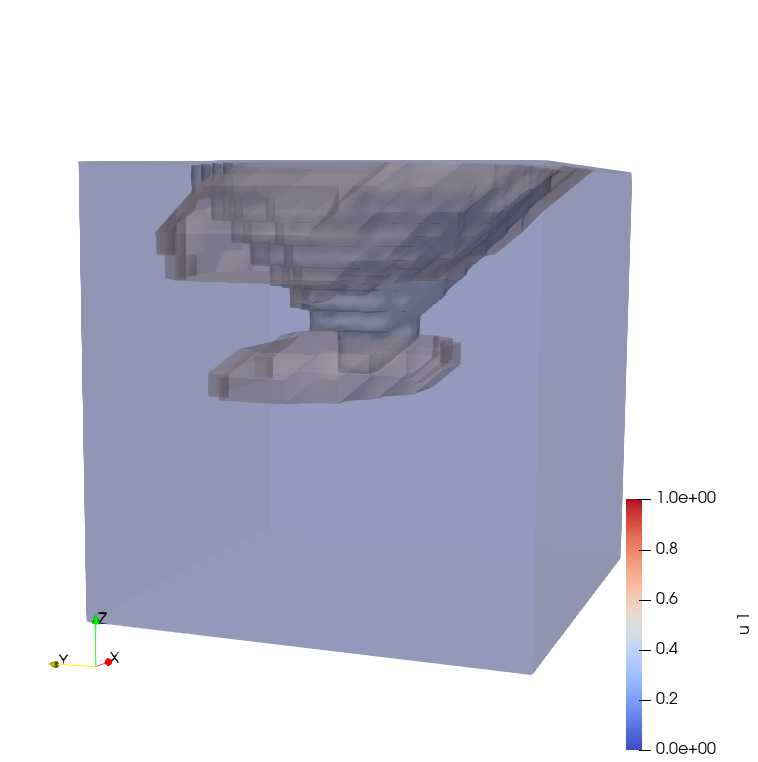}  &
   \includegraphics[trim = 3.0cm 0.0cm 1.5cm 4.0cm, scale=0.13, clip=]{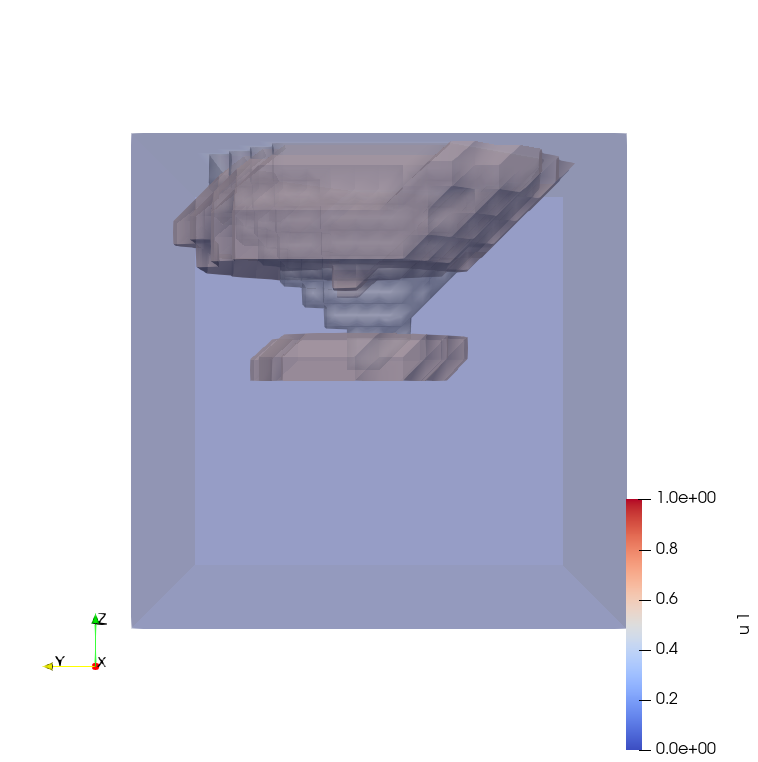}  &     
  \includegraphics[trim = 3.0cm 0.0cm 1.5cm 4.0cm, scale=0.13, clip=]{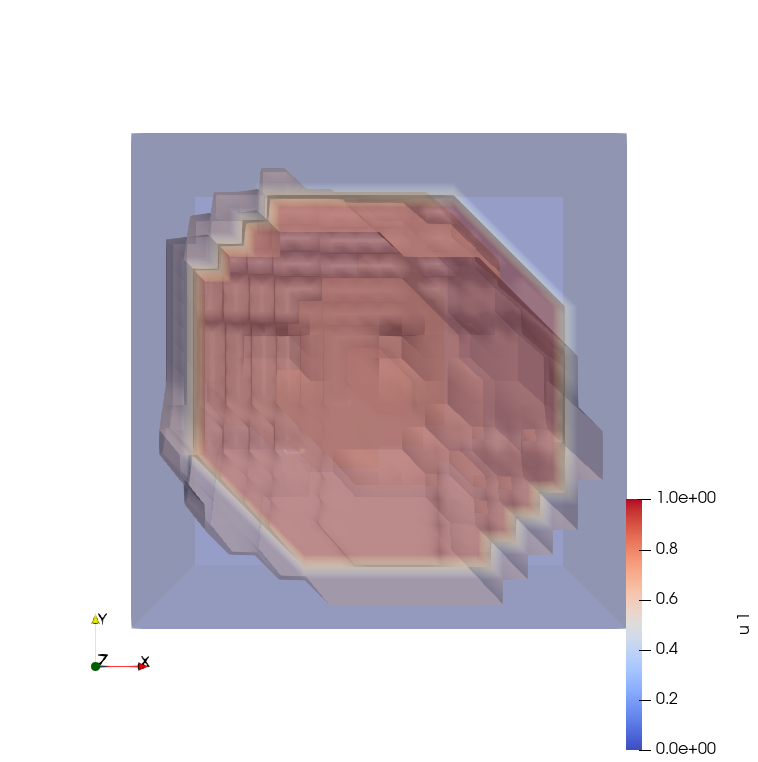}
  \\
      d)  prospect view    & e)   $x_2 x_3$ view  & f)  $x_1 x_2$  view   \\
   \includegraphics[trim = 7.0cm 0.0cm 7cm 4.0cm, scale=0.13, clip=]{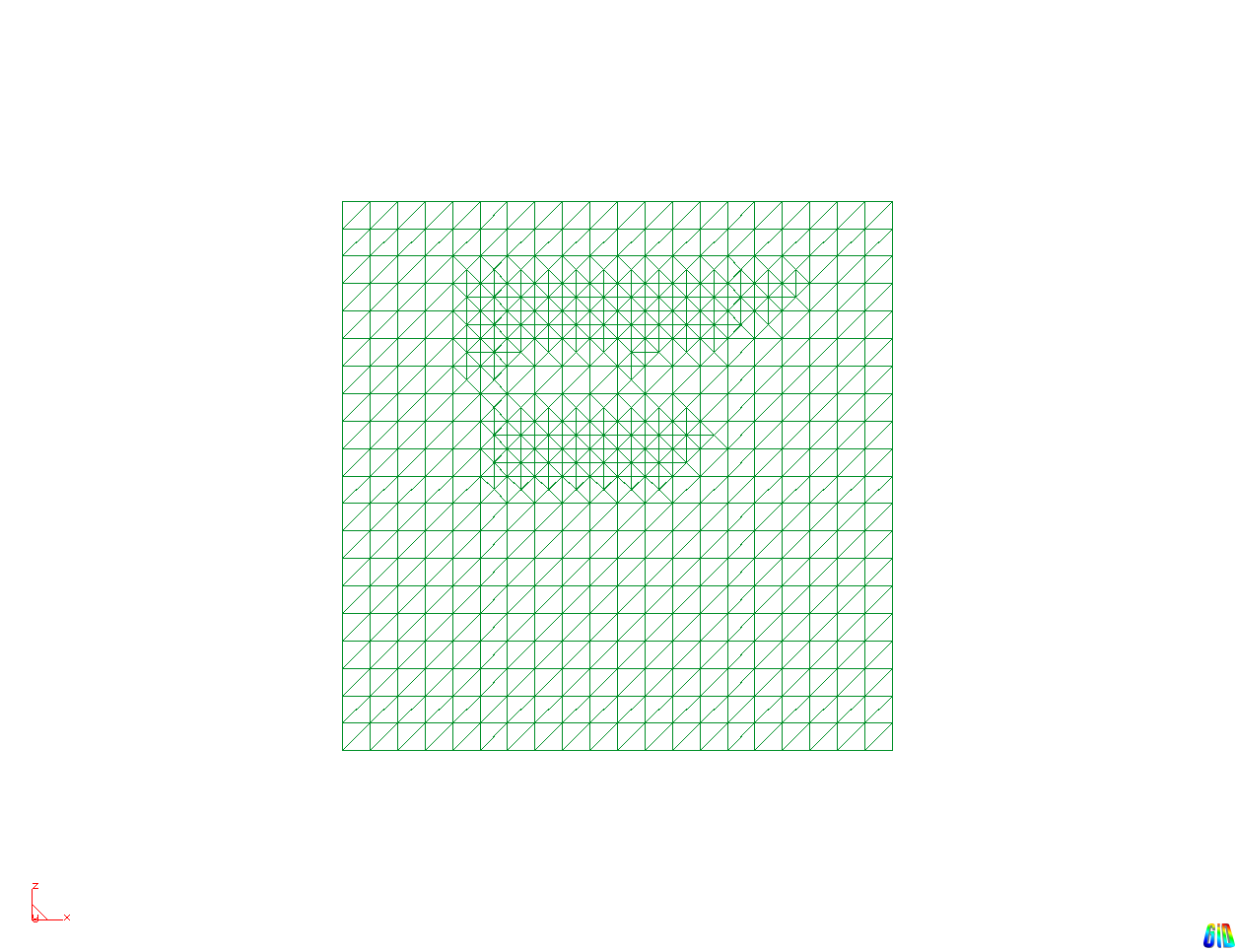}  &    
  \includegraphics[trim = 7.0cm 0.0cm 7cm 4.0cm, scale=0.13, clip=]{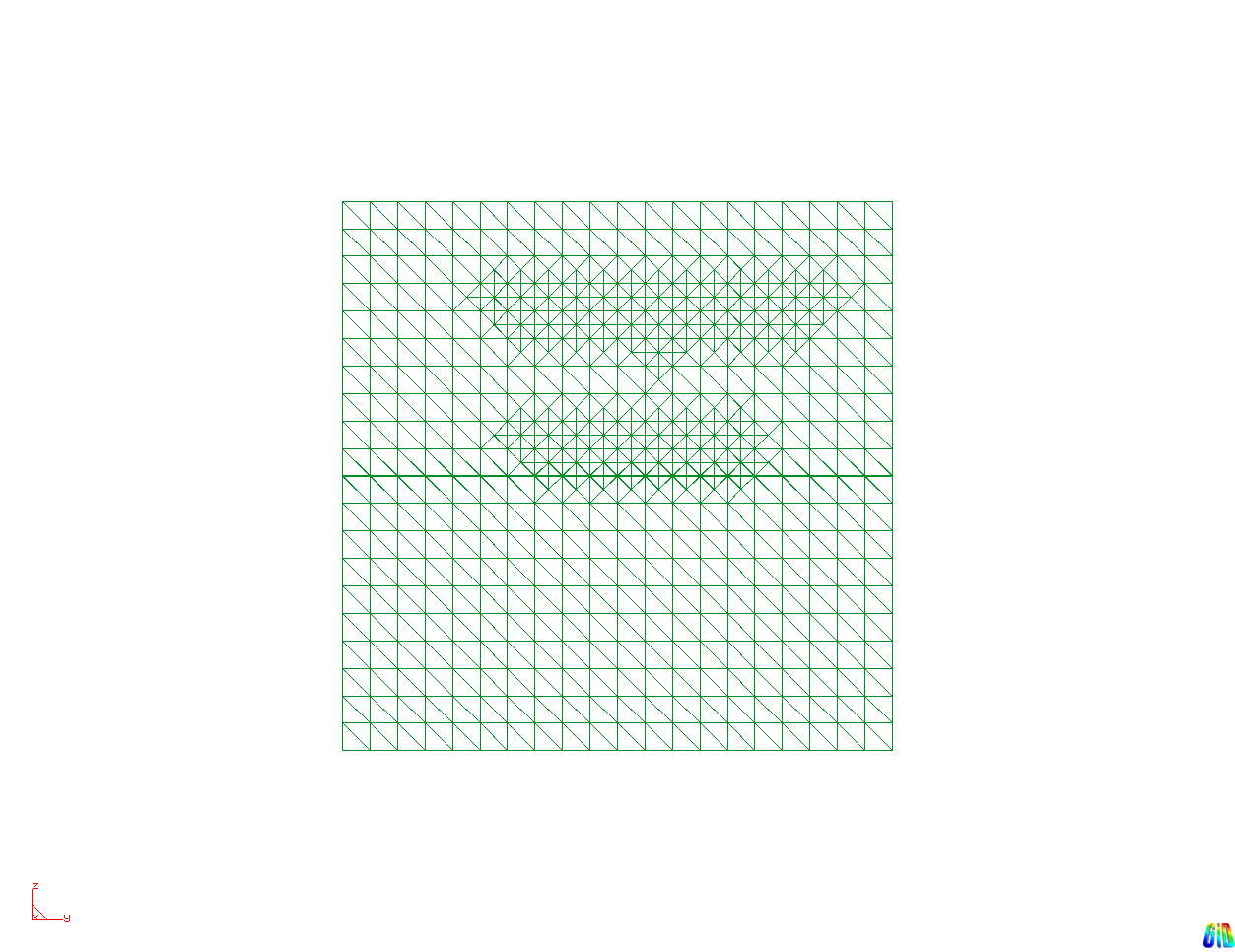}
  &
 \includegraphics[trim = 7.0cm 0.0cm 7cm 4.0cm, scale=0.13, clip=]{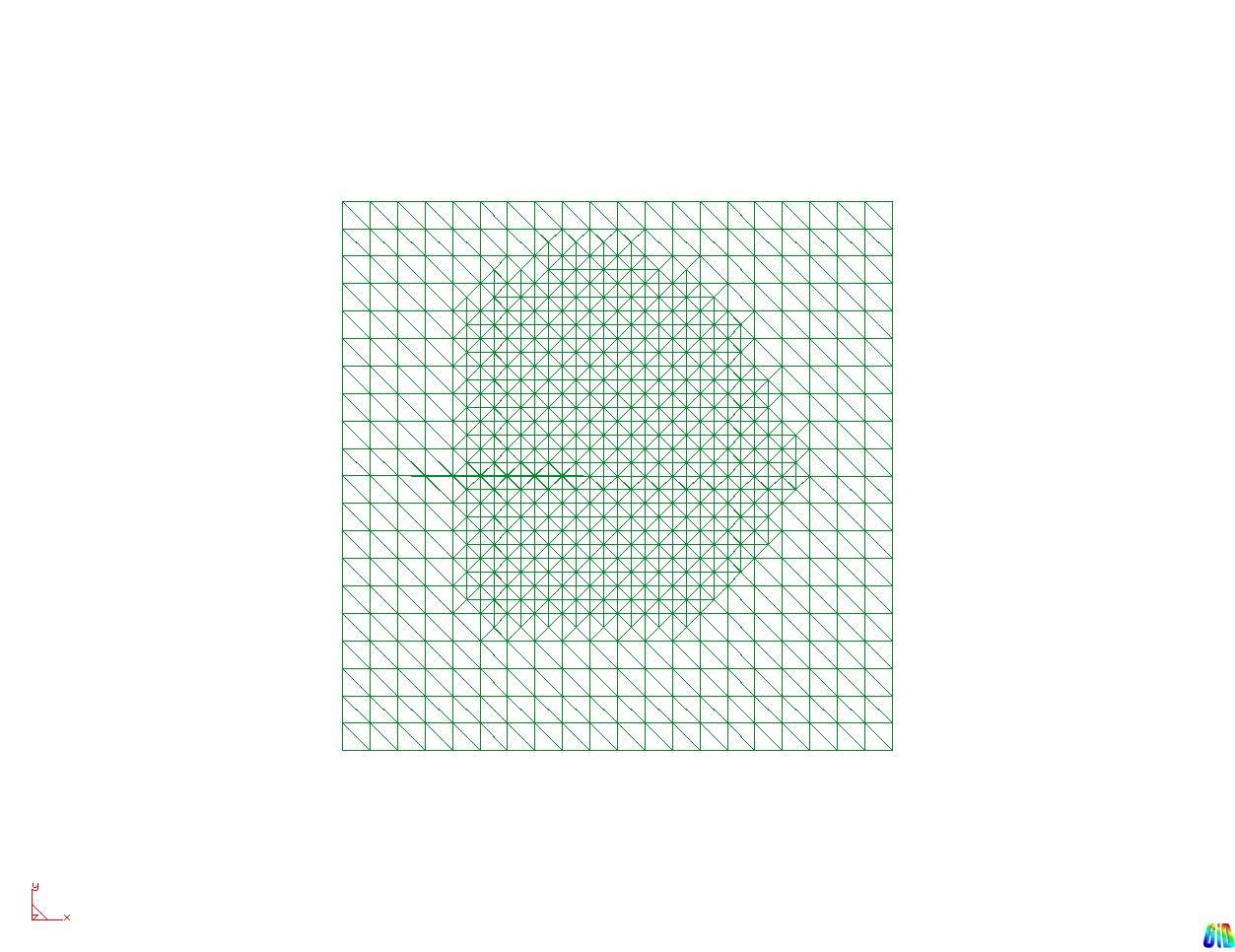}  
\\
    g) $x_1 x_3$  view   & h)  $x_2 x_3$  view   & i)   $x_1 x_2$  view    \\
    \end{tabular}
  \end{center}
  \caption{\small 3D tests, test 2. Performance of ACGA on the one time refined  mesh ${K_h}_1$. Isosurfaces of reconstruction  for 
$\max {\varepsilon_w}_h  \approx 9$     are presented in red color, and   for $\max {\sigma_w}_h \approx 0.57$  are shown in rose color.
a) - c) The weighted reconstruction of ${\varepsilon_w}_h$ (outlined in red color).
  d) - f) The weighted reconstruction of ${\sigma_w}_h$ (outlined in  rose color). g)-i) Projections of the adaptively refined mesh ${K_h}_1$.
    The noise level in the data for electric field is $\delta= 10\%$.  See Table  \ref{tab:table2ACGA} for obtained
    contrasts for $\max_{\Omega_{\rm FEM}} {\varepsilon_w}_h$ and $\max_{\Omega_{\rm FEM}} {\sigma_w}_h $.
    For comparison we also present exact isosurfaces  for $\varepsilon_w$ and $\sigma_w$
with values corresponding to the reconstructed ones, outlined in opaque.
  }
  \label{fig:ACGAref1}
\end{figure}


\begin{figure}[h!]
  \begin{center}
    \begin{tabular}{ccc}
      \includegraphics[trim = 3.0cm 0.0cm 1.5cm 4.0cm, scale=0.13, clip=]{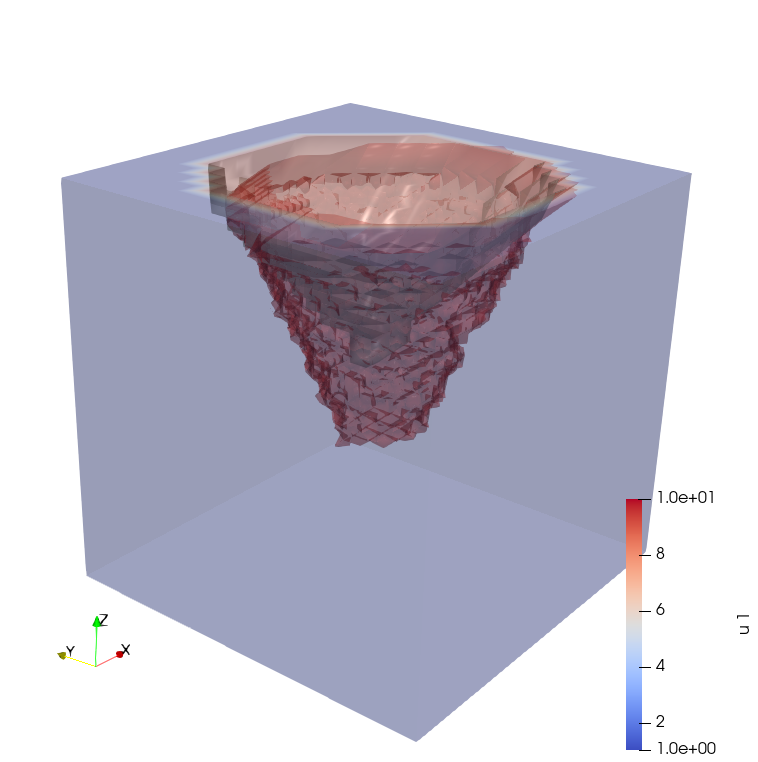} &
 \includegraphics[trim = 3.0cm 0.0cm 1.5cm 4.0cm, scale=0.13, clip=]{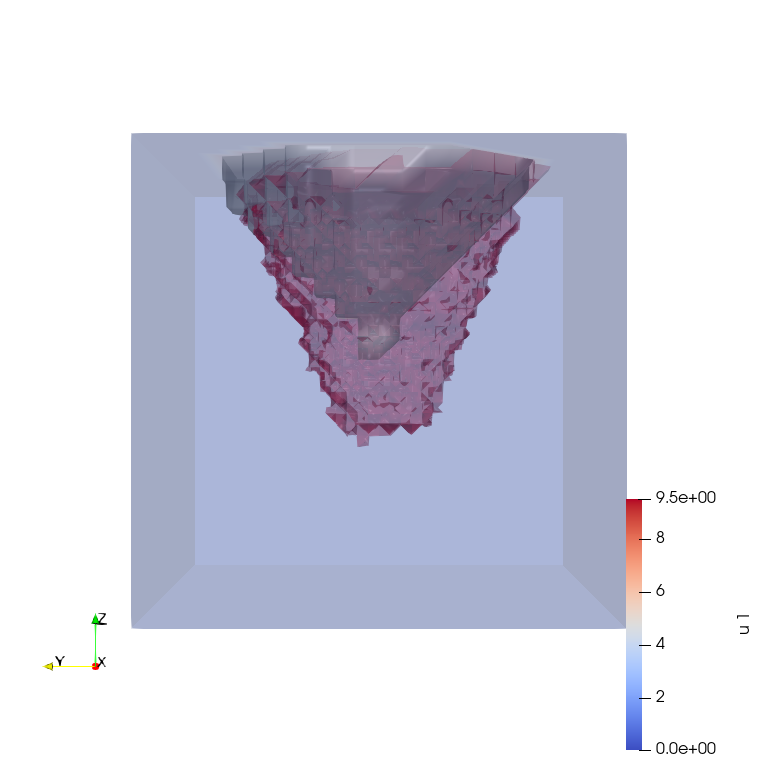} &
  \includegraphics[trim = 3.0cm 0.0cm 1.5cm 4.0cm, scale=0.13, clip=]{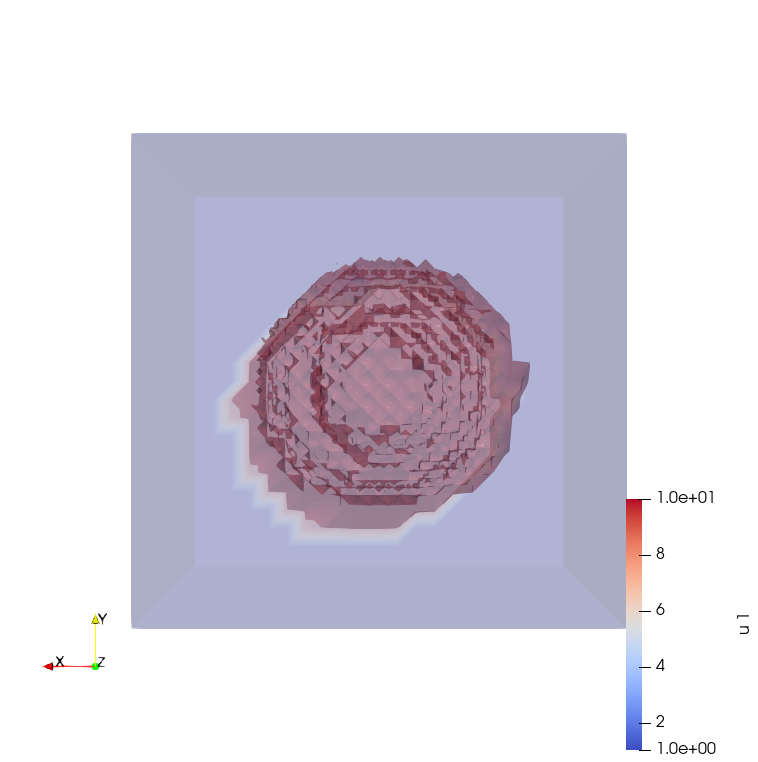} \\
    a)  prospect view   & b) $x_2 x_3$ view  &  c)  $x_1 x_2$  view  \\
      \includegraphics[trim = 3.0cm 0.0cm 1.5cm 4.0cm, scale=0.13, clip=]{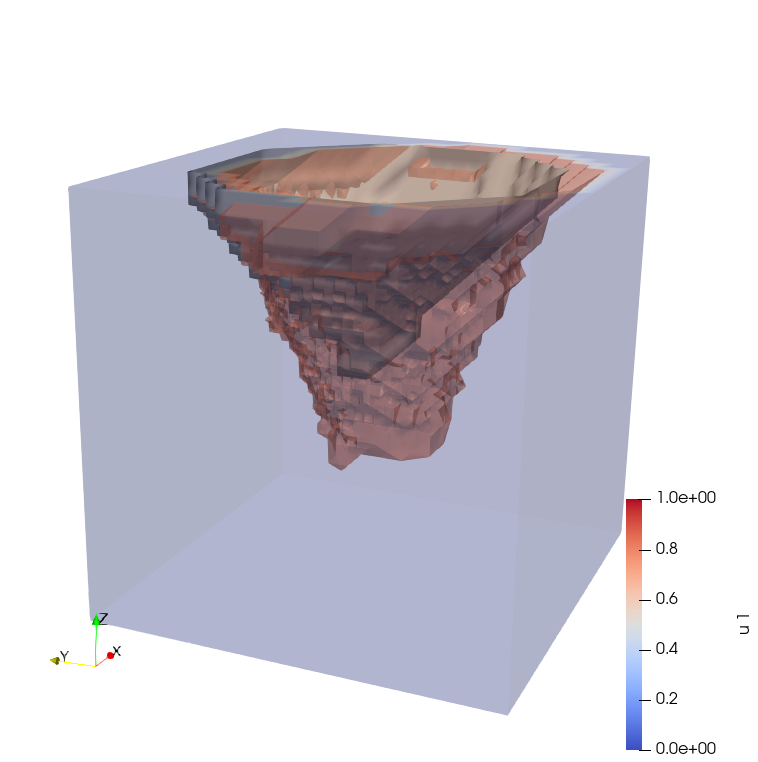}  &
   \includegraphics[trim = 3.0cm 0.0cm 1.5cm 4.0cm, scale=0.13, clip=]{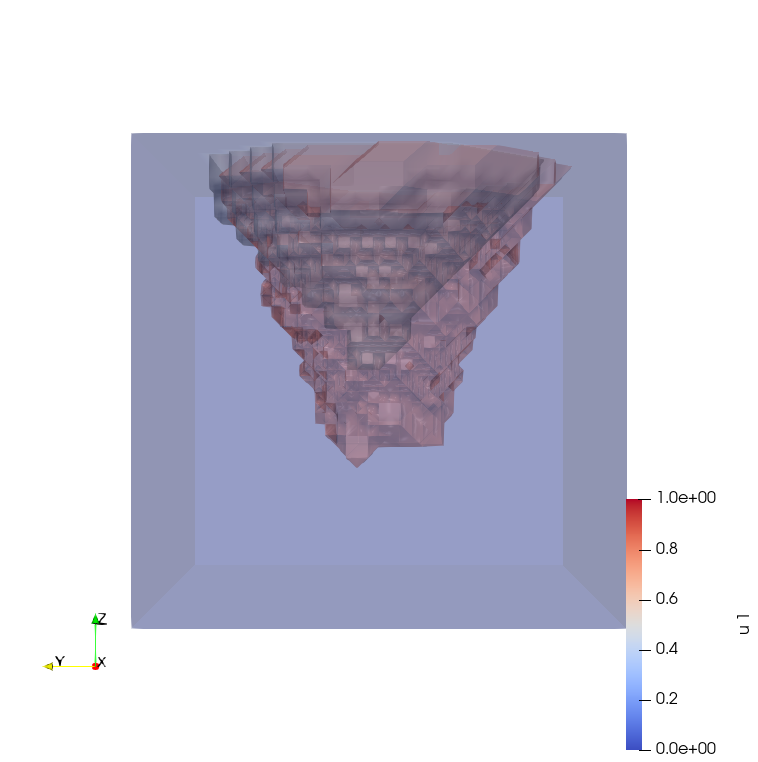}  &     
  \includegraphics[trim = 3.0cm 0.0cm 1.5cm 4.0cm, scale=0.13, clip=]{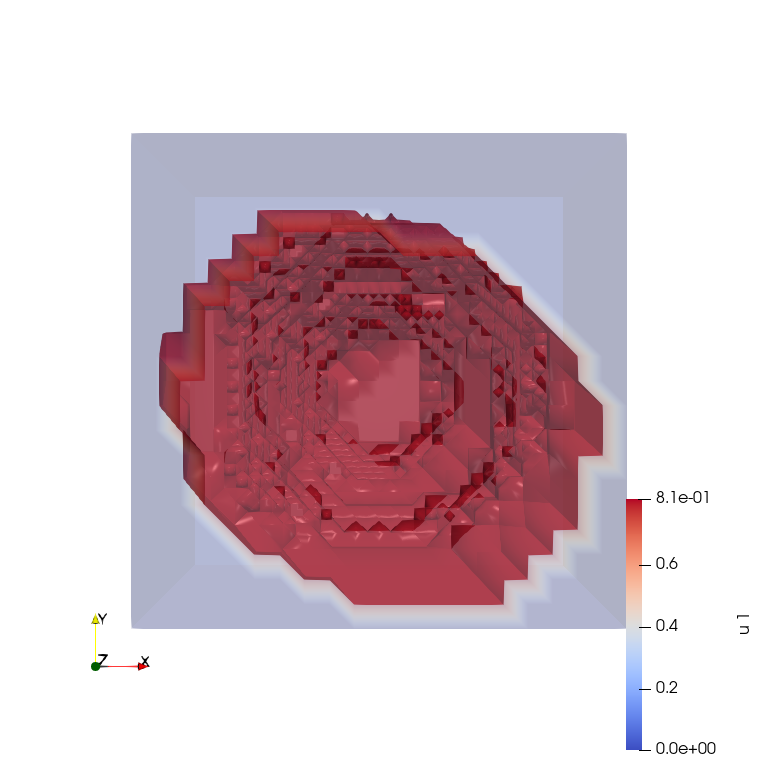}
  \\
      d)  prospect view    & e)   $x_2 x_3$ view  & f)  $x_1 x_2$  view   \\
   \includegraphics[trim = 7.0cm 0.0cm 7cm 4.0cm, scale=0.13, clip=]{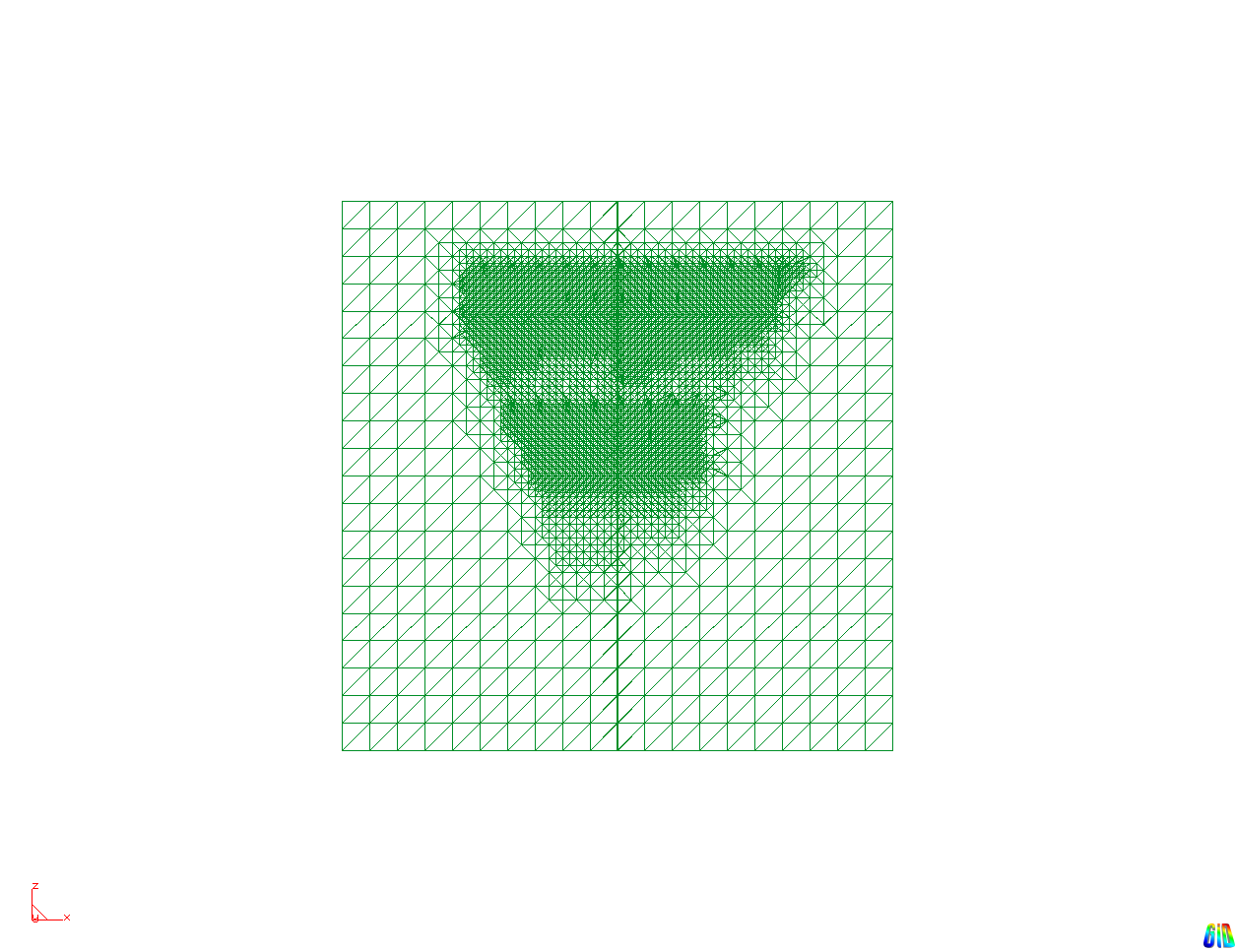}  &    
  \includegraphics[trim = 7.0cm 0.0cm 7cm 4.0cm, scale=0.13, clip=]{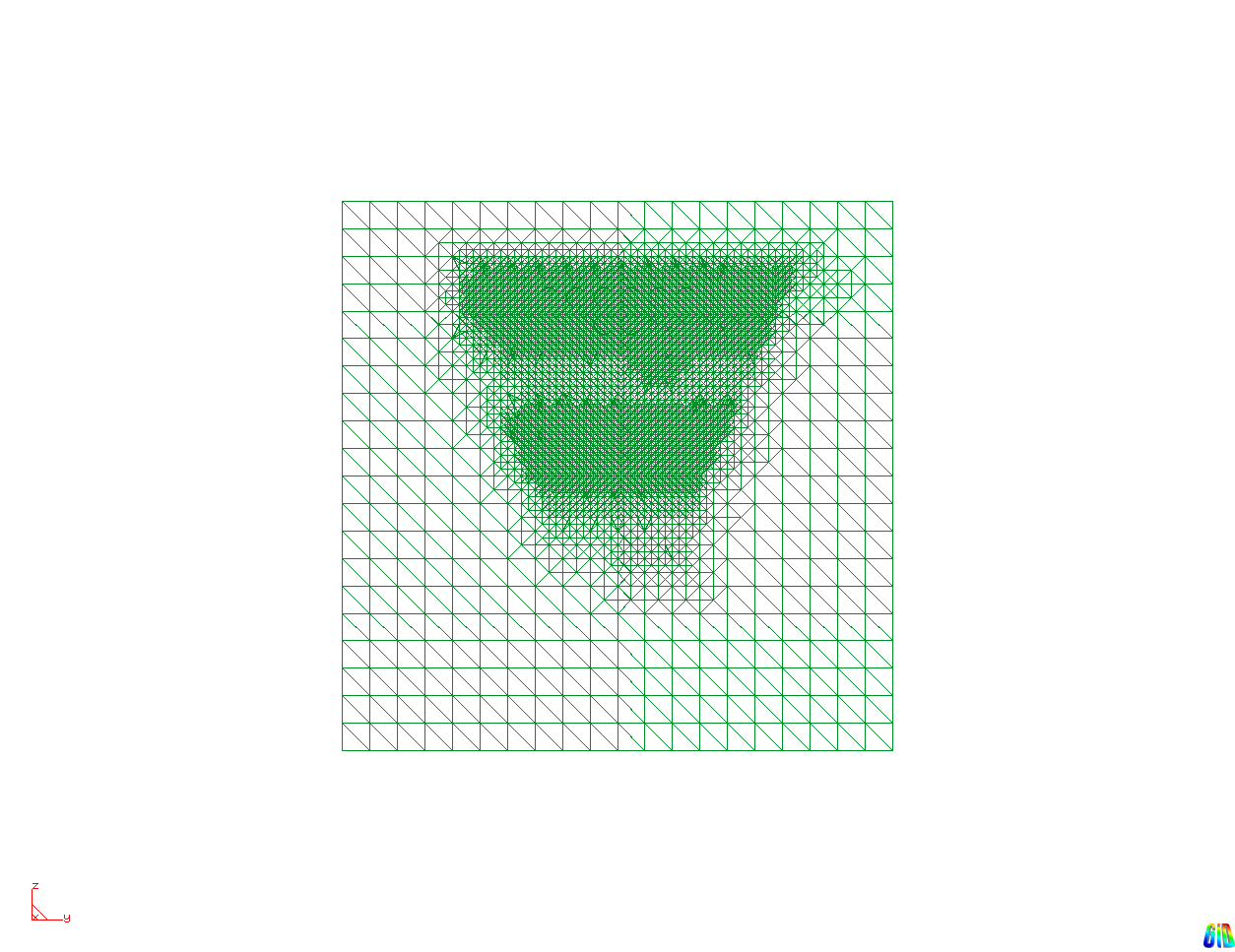}
  &
  \includegraphics[trim = 7.0cm 0.0cm 7cm 4.0cm, scale=0.13, clip=]{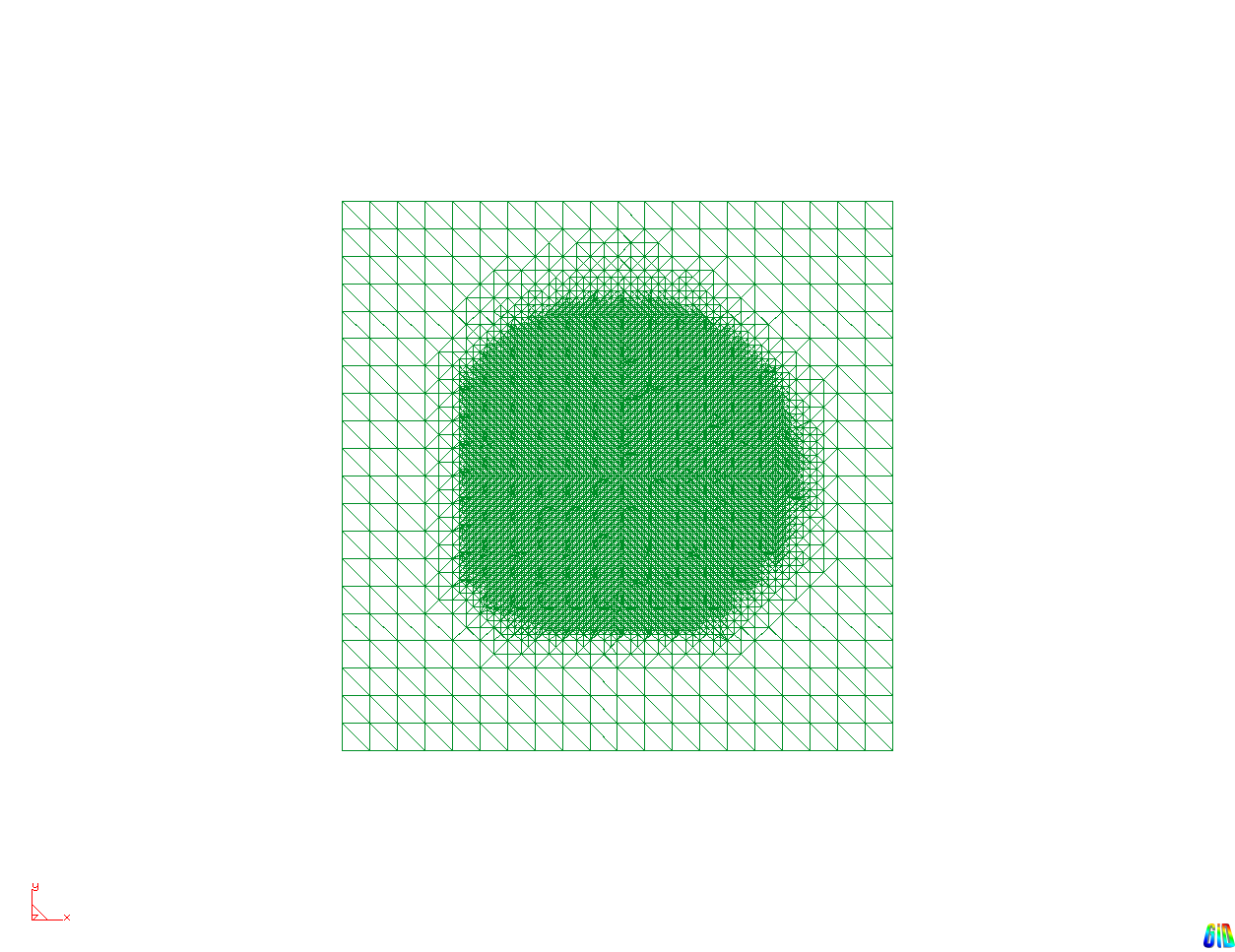}  
\\
    g) $x_1 x_3$  view   & h)  $x_2 x_3$  view   & i)   $x_1 x_2$  view    \\
    \end{tabular}
  \end{center}
  \caption{\small 3D tests, test 2. Performance of ACGA on the four times refined  mesh ${K_h}_4$. Isosurfaces of reconstruction  for 
$\max {\varepsilon_w}_h  \approx 9$     are presented in red color, and
for $\max {\sigma_w}_h \approx 0.77$  are shown also in red color.
a) - c) The weighted reconstruction of ${\varepsilon_w}_h$ (outlined in red color).
  d) - f) The weighted reconstruction of ${\sigma_w}_h$ (outlined in red color). g)-i) Projections of the adaptively refined mesh ${K_h}_4$.
    The noise level in the data for electric field is $\delta= 10\%$.  See Table  \ref{tab:table2ACGA} for obtained
    contrasts for $\max_{\Omega_{\rm FEM}} {\varepsilon_w}_h$ and $\max_{\Omega_{\rm FEM}} {\sigma_w}_h$.
    For comparison we also present exact isosurfaces  for $\varepsilon_w$ and $\sigma_w$
with values corresponding to the reconstructed ones, outlined in opaque.
  }
  \label{fig:ACGAref4}
\end{figure}


We take as an initial
 guess $\varepsilon^0 = 1, \sigma^0 = 0$ for all points of the
 computational domain $\Omega$ and run ACGA algorithm.
This choice has shown good reconstruction results and has been used
in several previous computational studies - see details in \cite{BKS, BL1, ICEAA2024_LB, BOOK}.
Physically it aligns with the
initial computations taken in the homogeneous  non-conductive  domain.

\textbf{Test 1}

Figures \ref{fig:ACGATest1ref4} show reconstruction of the weighted
 values of the relative dielectric permittivity and conductivity
 functions obtained by ACGA algorithm.  Using this figure we observe
 that reconstructions of both functions converges to the depth of the
 MM as mesh refinements are applied.  We also note that
 in this test the MM is located on the top of the backscattered domain
 and the depth of penetration inside the computational domain is only
 1.08 mm.  As a result, the maximal contrast for $\varepsilon_w$ is already
  achieved
  on the coarse mesh
and further mesh refinements do not lead to a significant improvement in the contrast for  $\varepsilon_w$.
However, the shape and the
 contrast for $\sigma_w$ is improved significantly with mesh
 refinements. We refer to the Table  \ref{tab:table1ACGA} for obtained
  computational relative errors $e_{\varepsilon}, e_{\sigma}$
 and 
contrasts for $\max_{\Omega_{\rm FEM}} {\varepsilon_w}_h$ and $\max_{\Omega_{\rm FEM}} {\sigma_w}_h$.

Since  we selected
 numbers $ \widetilde{\beta}_\varepsilon = 0.8 , \widetilde{\beta}_\sigma  = 0.8$
  in the mesh refinement criterion  in ACGA,
 the corresponding reconstructed values trigger mesh refinement only for
$k=4$.  In other words, these parameters
    are too large for the Test 1, but we wanted to test the same parameters for both tests
 in order to ensure a more consistent and reliable comparison.
   We note that the same  values
   $ \widetilde{\beta}_\varepsilon = 0.8 , \widetilde{\beta}_\sigma  = 0.8$  provide very  effective local mesh refinements  for
   all meshes in  Test 2.

\newpage

\textbf{Test 2}

Figures \ref{fig:CGA}, \ref{fig:ACGAref1}, and \ref{fig:ACGAref4} show
 reconstruction of the weighted values of the relative dielectric
 permittivity and conductivity functions obtained by CGA and ACGA
 algorithms, respectively.  Comparing results of the reconstruction
 shown on Figure \ref{fig:CGA} with reconstructions of Figure
 \ref{fig:ACGAref1} and \ref{fig:ACGAref4} we can conclude that the
 local adaptive mesh refinement, or ACGA algorithm, significantly
 improves results of 3D reconstruction of $\varepsilon_w$ and
 $\sigma_w$. Using Figure \ref{fig:ACGAref4} we observe that
 reconstructions of both functions converges to the depth of the MM as
 mesh refinements are applied.  The final reconstructions are obtained
 on the 4 times locally refined mesh ${K_h}_4$ and are shown in the
 Figure \ref{fig:ACGAref4}.  From this figure, we also observe that
 the conical shape of the obtained reconstructions accurately
 represents the shape of the MM at a depth of 4.06 mm, consistent with
 the mesh refinements (see Figures \ref{fig:ACGAref4}-g), h), i) )
 thought the depth of the reconstructed MM is slightly overestimated.
 We refer to the Table  \ref{tab:table2ACGA} for obtained
  computational relative errors $e_{\varepsilon}, e_{\sigma}$
 and 
contrasts for $\max_{\Omega_{\rm FEM}} {\varepsilon_w}_h$ and $\max_{\Omega_{\rm FEM}} {\sigma_w}_h$
 on locally adaptively refined meshes.
We hope that performing the
 initial computations on a finer mesh with a size of $h=0.25 $
 instead of $h=0.5$  will provide a more accurate depth for the MM
 in Test 2. This hypothesis  requires more computational resources
  and can be tested in future.

\FloatBarrier

\section{Conclusions}

\label{sec:concl}

In the current work we developed variational optimization approach for
  reconstruction of dielectric permittivity and conductivity functions
  using time-dependent scattered measurements of the electric field
  collected at the part of the boundary of the computational
  domain. This is typical CIP which is an ill-posed problem.
We solve CIP by minimizing the regularised Tikhonov's
 functional via Lagrangian approach.  We present Lagrangian, derive
 optimality conditions and establish proofs for stability estimates
 for the forward and adjoint problems. Further, we   present proof of the Fr\'echet
 differentiability of the  regularized Tikhonov's functional and existence resut of the
 solution of CIP.
Our  two- and three-dimensional 
computational tests show qualitative and quantitative
reconstruction of dielectric permittivity and conductivity functions in several examples.

Future research will focus on reconstructing unweighted dielectric
permittivity and conductivity functions for the detection of malignant
melanoma on the skin, using both simulated and experimentally acquired
data. This effort will require the development of a new adaptive
DDFE/FDM method, specifically designed for finite difference domains,
in which a damped wave equation replaces the acoustic model.

\section*{Acknowledgment}

The research of authors  is supported by the Swedish Research Council grant VR 2024-04459
 and STINT grant MG2023-9300.


\medskip
\medskip


\end{document}